\documentclass[10pt,a4paper,onefignum,oneeqnum,onetabnum]{siamart220329}

\usepackage{amsmath,amsfonts,amssymb,amscd,mathrsfs,array,subeqnarray}
\usepackage[markup=underlined]{changes}
\usepackage{todonotes}
\setcommentmarkup{\todo[color={authorcolor!20},size=\scriptsize]{#3: #1}}

\newtheorem{example}{\bf Example}[section]
\newtheorem{remark}[theorem]{Remark}
\allowdisplaybreaks

\usepackage{geometry}
\geometry{left=3cm,right=3cm,top=2.5cm,bottom=2.6cm}

\begin{document}
\date{}

\title{Stochastic linear-quadratic differential game with Markovian jumps in an infinite horizon\thanks{Fan Wu and Xin Zhang are supported by the National Natural Science Foundation of China (Grant Nos. 12171086, 12371472) and by Fundamental Research Funds for the Central Universities grant 2242024K40018. Xun Li is supported by RGC of Hong Kong (Grant Nos. 15216720, 15221621, 15226922) and partially from PolyU 1-ZVXA. Jie Xiong is supported by National Key R\&D Program of China (Grant No. 2022YFA1006102), and the National Natural Science Foundation of China (Grant No. 12326368).}}
\author{Fan Wu \thanks{School of Mathematics, Southeast University, Nanjing 211189, China}\and
Xun Li \thanks{Department of Applied Mathematics, The Hong Kong Polytechnic University, Hong Kong, China; Email: li.xun@polyu.edu.hk}\and Jie Xiong \thanks{Department of Mathematics and SUSTech International Center for Mathematics, Southern University of Science and Technology, Shenzhen, Guangdong, 518055, China; E-mail: xiongj@sustech.edu.cn}\and
Xin Zhang \thanks{Corresponding Author: School of Mathematics, Southeast University, Nanjing 211189, China; E-mail: x.zhang.seu@gmail.com}} \maketitle

\noindent{\bf Abstract:}
This paper investigates a two-person non-homogeneous linear-quadratic stochastic differential game (LQ-SDG, for short) in an infinite horizon for a system regulated by a time-invariant Markov chain. Both non-zero-sum and zero-sum LQ-SDG problems are studied. It is shown that the zero-sum LQ-SDG problem can be considered a special non-zero-sum LQ-SDG problem. The open-loop Nash equilibrium point of non-zero-sum (zero-sum, respectively) LQ-SDG problem is characterized by the solvability of a system of constrained forward-backward stochastic differential equations (FBSDEs, for short) in an infinite horizon and the convexity (convexity-concavity, respectively) of the performance functional and their corresponding closed-loop Nash equilibrium strategy are characterized by the solvability of a system of constrained coupled algebra Riccati equations (CAREs, for short) with certain stabilizing conditions. In addition, the closed-loop representation of open-loop Nash equilibrium point for non-zero-sum (zero-sum, respectively) LQ-SDG is provided by the non-symmetric (symmetric, respectively) solution to a system CAREs. At the end of this paper, we provide three concrete examples and solve their open-loop/closed-loop Nash equilibrium strategy based on the obtained results.

\medskip

\noindent{\bf Keywords:}
Linear-quadratic stochastic  differential game, Nash equilibrium point, Open-loop and closed-loop solvability, Closed-loop representation
\medskip

\noindent{\bf MSC codes:} 93E03, 93E20

\section{Introduction}\label{section-introduction}
The game theory plays a vital role in many fields, and its earlier studies can be traced back to \cite{Zermelo-1912-anwendung,Von-1928-theorie,Von-1947-theory}. A significant milestone in game theory is Nash's definition of the Nash equilibrium point in the 1950s and his research on non-cooperative games \cite{Nash-1951-non}. Ba{\c{s}}er and Olsder \cite{Basar-1999-dynamic} systematically introduced the dynamic non-cooperative game theory, and Issacs \cite{Isaacs-1965-differential} originally studied the deterministic differential game. In the past few years, the SDG theory has received intensive attention. Many researchers are especially concerned about the LQ-SDG problem due to its structure and properties. We refer the interested readers to \cite{Hamadene-1998-backward,Hamadene-1999-nonzero,Yu-2012-LQG,Li-2015-Recursive,Sun_linear_2014,Li_linear-quadratic_2021,Wang_time-inconsistent_2022,Wang_kind_2018,Wu_linear_2005,Zhang_backward_2011} for the studies of LQ-SDG problem in the finite horizon and to \cite{Zhu-Zhang-Bin-2014,Sun.JR_2016_IZSLQI,Li-Shi-Yong-2021-ID-MFLQ-IF} for the studies investigated the LQ-SDG problem in the infinite horizon. 

On the other hand, studies on the stochastic model of jump linear systems can be traced back to the work of Krasosvkii and Lidskii \cite{Krasovskii-1961-analytical}. Particularly, the dynamic system involving regime-switching jumps can be used to characterize the instantaneous changes of system parameters and is widely used in various fields, such as engineering, financial management, and economics.  See, for example, Li et al. \cite{Li-Zhou-Rami-2003-ID-MLQ-IF}, Ji and Chizeck \cite{Ji-Chizeck-1991-D-MLQG-F,Ji-Chizeck-1990-D-MLQ-I/F}, Zhang et al. \cite{zhang2021open,zhang_general_2018,zhang_stochastic_2012,zhang-Elliott-Siu-Guo-2011,Zhang-Siu-Meng-2010}, Wu et al. \cite{Wu-Tang-Meng-2023}, Sun et al. \cite{sun_risk-sensitive_2018}, Wen et al. \cite{Wen_2023} and the references therein.

Now, let us mention a few recent literature related to our present paper. Zhang et al. \cite{zhang2021open} investigated a stochastic LQ problem of a Markovian regime-switching system in the finite horizon. It has shown that the open-loop solvability is equivalent to the existence of an adapted solution to the corresponding optimality system with constraint, the closed-loop solvability is equivalent to the existence of a regular solution to Riccati equations, and the strongly regular solvability of Riccati equations is equivalent to the uniform convexity of the cost functional. In addition, they also provided an example to show that open-loop solvability can not imply closed-loop solvability. Wu et al. \cite{Wu-etal} generalized their result to the infinite horizon case and showed the equivalence between the open-loop solvability and closed-loop solvability, which are all equivalent to the existence of a static stabilizing solution to the associated constrained CAREs. Note that in \cite{Wu-etal}, some notions of $L^{2}$-stabilizability for linear stochastic differential system regulated by a Markov chain are introduced, which are interestingly different from the classical mean-square stabilizability studied by \cite{Rami-Zhou-Moore-2000-ID-LQ-IF,Rami-Zhou-2000-LMI-RE-IDLQIF} and can be considered as an extension of Huang et al. \cite{Jianhui-Huang-2015}. Recently, Sun et al. \cite{Sun.JR_2016_IZSLQI} studied a two-person zero-sum LQ-SDG problem in the infinite horizon. It characterized the closed-loop saddle points by the solvability of an algebraic Riccati equation with a certain stabilizing condition. Li et al. \cite{Li-Shi-Yong-2021-ID-MFLQ-IF} successfully generalized their study to the control system with mean-field and treated the corresponding LQ problem, non-zero-sum LQ-SDG problem and zero-sum LQ-SDG problem in a unified framework. Zhu et al. \cite{Zhu-Zhang-Bin-2014} investigated a class of non-zero-sum LQ-SDG problems with Markovian jumps under the mean-square stabilizability framework. However, the diffusion term of state process in \cite{Zhu-Zhang-Bin-2014} does not include control. 

This paper will further discuss the LQ Nash differential game with Markovian jumps based on our earlier work \cite{Wu-etal}. Different from \cite{Zhu-Zhang-Bin-2014}, the control can be entered in diffusion term of state process in our model, and both non-zero-sum and zero-sum LQ-SDG problems are studied successively. Inspired by Li et al. \cite{Li-Shi-Yong-2021-ID-MFLQ-IF}, we consider the zero-sum LQ-SDG problem as a special non-zero-sum LQ-SDG problem and solve the zero-sum LQ-SDG problem based on the results of non-zero-sum LQ-SDG problem. Compared with the existing literature,  the main contributions of this paper can be concluded as follows:
\begin{enumerate}
    \item We expand the results in \cite{Sun.JR_2016_IZSLQI} to the regime-switching jump-diffusion model and investigate not only zero-sum LQ-SDG problem but also non-zero-sum LQ-SDG problem. Our study also generalized the results in \cite{Zhu-Zhang-Bin-2014} by allowing the control to enter the diffusion term and discussing such a problem under the  $L^{2}$-stabilizability framework rather than the mean-square stabilizability framework.
    \item Compared with Li et al. \cite{Li-Shi-Yong-2021-ID-MFLQ-IF}, we use a more direct method to study the open-loop and closed-loop solvability of non-zero-sum LQ-SDG problem. We split non-zero-sum  LQ-SDG problem into two coupled LQ control problems, whose open-loop solvability and closed-loop solvability have been solved in our earlier work \cite{Wu-etal}. And then, based on the results in \cite{Wu-etal}, we obtain the open-loop solvability and closed-loop solvability of non-zero-sum LQ-SDG problem directly. In addition, we also verify that the results based on such a direct method are consistent with those in Li et al. \cite{Li-Shi-Yong-2021-ID-MFLQ-IF}, which consider a mean-field non-zero-sum LQ-SDG problem.
    \item Both our study and  Li et al. \cite{Li-Shi-Yong-2021-ID-MFLQ-IF} characterized the open-loop Nash equilibrium point of non-zero-sum (zero-sum, respectively) LQ-SDG  as the solvability of a system of constrained FBSDEs and the convexity (convexity-concavity, respectively) of the performance functional. However, Li et al. \cite{Li-Shi-Yong-2021-ID-MFLQ-IF} does not provide sufficient conditions to verify the convexity and convexity-concavity of the corresponding performance functional. Hence, they can only construct the closed-loop representation for the open-loop Nash equilibrium point in their numerical section, assuming that the performance functional satisfies the corresponding convexity/concavity condition. In this paper, we have improved this point and provide not only a sufficient condition to verify the convexity of cost functionals for non-zero-sum LQ-SDG problem but also a sufficient condition to verify the convexity-concavity of performance functional for zero-sum LQ-SDG problem (see Remark \ref{rmk-GLQ-open-FBSDEs-1} and Proposition \ref{prop-convex-concave}). Such results can be used to verify that performance functionals in numerical examples satisfy the corresponding convexity/concavity condition.
    \item In the numerical section, we allow both the state and control processes to be multi-dimensional. Using linear matrix inequality (LMI, for short) and semidefinite programming (SDP, for short) techniques (see, for example, \cite{Boyd-Ghaoui-Feron-Balakrishnan-1994-LMI,Vandenberghe-Boyd-1996-SDP}), we solve the corresponding CAREs in each instance, and use them to construct closed-loop representation strategy and closed-loop Nash equilibrium strategy of each example. Compared with \cite{Sun.JR_2016_IZSLQI} that, in the numerical section, discussed the corresponding zero-sum LQ-SDG in a one-dimensional case, our work is more challenging.
\end{enumerate}

The rest of the paper is organized as follows. In Section \ref{section-formulation}, we formulate the problem under the $L^{2}$-stabilizable framework, and both non-zero-sum and zero-sum LQ-SDG problems are introduced.
Section \ref{section-GLQ} aims to give results on the non-zero-sum LQ-SDG problem, including the open-loop solvability, their closed-loop representation, and the closed-loop solvability with constrained CAREs.
Section \ref{section-ZLQ} investigates the open-loop and closed-loop saddle points for zero-sum SLQ differential games. Some examples are presented in Section \ref{section-examples} to illustrate the results developed in the earlier sections.


\section{Problem formulation}\label{section-formulation}
 Let $(\Omega,\mathcal{F},\mathbb{F},\mathbb{P} )$ be a complete filtered probability space on which a one-dimensional standard Brownian motion $W$ and a continuous time, irreducible Markov chain $\alpha$ are defined with $\mathbb{F}=\{\mathcal{F}_{t}\}_{t\geq 0}$ being its natural filtration augmented by all $\mathbb{P}$-null sets in $\mathcal{F}$. We denote the state space of Markov chain $\alpha$ as $\mathcal{S}:=\left\{1,2,...,L\right\}$ whose generators are given by $\Pi=\left\{\pi_{ij}\mid i,j\in\mathcal{S}\right\}$, where $\pi_{ij}\geq 0$ for $i\neq j $ and $\pi_{ii}=-\sum_{j\neq i}\pi_{ij}$. Let $N_{j}(t)$ be the number of jumps to state $j$ up to time $t$. Then for any $j\in\mathcal{S}$, $\widetilde{N}_{j}(t)\triangleq N_{j}(t)-\int_{0}^{t}\sum_{i\neq j}\left[\pi_{ij}\mathbb{I}_{\{\alpha_{s-}=i\}}(s)\right]ds, $
is a $\left(\mathbb{F},\mathbb{P}\right)$-martingale. We define
$\mathbf{\Gamma}(s)\cdot d\mathbf{\widetilde{N}}(s)\triangleq\sum_{j=1}^{L}\Gamma_{j}(s)d\widetilde{N}_{j}(s),$
for given D-dimensional process $\mathbf{\Gamma}=\left[\Gamma_1,\Gamma_2,...,\Gamma_L\right]$
and define
$\Theta(\alpha_{t})\triangleq\sum_{i=1}^{L}\Theta(i)\mathbb{I}_{\{\alpha_{t}=i\}}(t)$ for given D-dimensional vector $\mathbf{\Theta}=\left[\Theta(1),\Theta(2),...,\Theta(L)\right]$.
Let $\mathbb{B}$ be a given Banach space and denote $\mathcal{D}(\mathbb{B})\triangleq\left\{\mathbf{\Lambda}=\left[\Lambda(1),\Lambda(2),...,\Lambda(L)\right] \mid \Lambda(i) \in \mathbb{B}\text{, }  i\in \mathcal{S}\right\}.$
Let $\mathbb{R}^{n\times m}$ be the Euclidean space of all $ n \times m$  matrices and set $\mathbb{R}^{n}:=\mathbb{R}^{n\times 1}$ for simplicity. In addition, the set of all $ n\times n$ symmetric matrices is denoted by $\mathbb{S}^n$. Specially, the sets of all $ n\times n$ semi-positive definite matrices and positive definite matrices are denoted by $\overline{\mathbb{S}_{+}^n}$ and $\mathbb{S}_{+}^n$, respectively. For any $M, N \in \mathbb{S}^n$, we write $M \geqslant N$ (respectively, $M>N$) if $M-N$ is semi-positive definite (respectively, positive definite). Let $M^{\dag}$ denote the pseudo-inverse of a matrix $M\in\mathbb{R}^{n\times m}$, which  is equal to the inverse $M^{-1}$ of $M\in\mathbb{R}^{n\times n}$ if it exist.   In addition, the identity matrix of size $n$ is denoted by $I_{n}$. When no confusion arises, we often suppress the index $n$ and write $I$ instead of $I_{n}$.

Let $\mathcal{P}$ be the $\mathbb{F}$ predictable $\sigma$-field on $[0,\infty)\times\Omega$ and write $\varphi \in \mathbb{F}$ (respectively, $\varphi \in \mathcal{P}$) if it is $\mathbb{F}$-progressively measurable (respectively, $\mathcal{P}$-measurable). Then, for any Euclidean space $\mathbb{H}$, we introduce the following space:
$$
\begin{aligned}
&L_{\mathbb{G}}^{2,loc}(\mathbb{H}) =\left\{\varphi:[0, \infty)\times \Omega \rightarrow \mathbb{H} \mid \varphi \in \mathbb{G}\text{, } \mathbb{E} \int_{0}^{T}|\varphi(s)|^{2} ds<\infty, \quad\forall T>0\right\}, \\
&L_{\mathbb{G}}^{2}(\mathbb{H}) =\left\{\varphi:[0, \infty) \times \Omega \rightarrow \mathbb{H} \mid \varphi \in \mathbb{G}\text{, } \mathbb{E} \int_{0}^{\infty}|\varphi(s)|^{2} ds<\infty\right\},\quad
\mathbb{G}=\mathbb{F},\mathcal{P}.
\end{aligned}
$$

Before we formally introduce the LQ-SDG problem, we first provide some basic concepts for controlled linear stochastic differential equation (SDE, for short) as follows:
\begin{equation}\label{ACBD}
\left\{
\begin{aligned}
&dX(t)=\left[A(\alpha_{t})X(t)+B(\alpha_{t})u(t)\right]dt+\left[C(\alpha_{t})X(t)+D(\alpha_{t})u(t)\right]dW(t), \quad t\geq 0,\\
&X(0)=x,\quad \alpha_{0}=i,
\end{aligned}
\right.
\end{equation}
where $u\in L_{\mathbb{F}}^{2}(\mathbb{R}^{m})$ and for $i\in\mathcal{S}$, $A(i)$, $C(i)\in\mathbb{R}^{n\times n}$, $B(i)$, $D(i)\in\mathbb{R}^{n\times m}$. For simplicity, we denote the above system as system $[A,C;B,D]_{\alpha}$ and $[A,C]_{\alpha}$ for the case of $B=D=0$, i.e., $[A,C]_{\alpha}=[A,C;0,0]_{\alpha}$.
\begin{definition}\label{def-L2}
\begin{description}
\item[(i)] System $[A,C]_{\alpha}$ is said to be $L^{2}$-stable if for any $(x,i)\in\mathbb{R}^{n}\times\mathcal{S}$, the solution $X$ to \eqref{ACBD} with $B=D=0$ is in $L_{\mathbb{F}}^{2}(\mathbb{R}^{n})$.
\item[(ii)] System $[A,C;B,D]_{\alpha}$ is said to be $L^{2}$-stabilizable if and only if there exists a $\mathbf{\Theta}\in\mathcal{D}\left(\mathbb{R}^{m\times n}\right)$ such that the following system  is $L^2$-stable:
\begin{equation*}
\left\{
\begin{aligned}
&dX(t)=\left[A(\alpha_{t})+B(\alpha_{t})\Theta(\alpha_{t})\right]X(t)dt+\left[C(\alpha_{t})+D(\alpha_{t})\Theta(\alpha_{t})\right]X(t)dW(t), \quad t\geq 0,\\
&X(0)=x,\quad \alpha_{0}=i.
\end{aligned}
\right.
\end{equation*}
Any element $\mathbf{\Theta}$ satisfying the above condition is called a stabilizer of system $\left[A,C;B,D\right]_{\alpha}$ and we denote the set of all stabilizers as $\mathcal{H}_{\alpha}\equiv \mathcal{H}\left[A,C;B,D\right]_{\alpha}$.
\end{description}
\end{definition}


Now, we return to introduce the LQ-SDG problem, whose state process is a linear SDE and controlled by two players:
 \begin{equation}\label{GLQ-state}
   \left\{
   \begin{aligned}
   dX(t)&=\left[A\left(\alpha_{t}\right)X(t)+B_{1}\left(\alpha_{t}\right)u_{1}(t)+B_{2}\left(\alpha_{t}\right)u_{2}(t)+b(t)\right]dt\\
   &\quad+\left[C\left(\alpha_{t}\right)X(t)+D_{1}\left(\alpha_{t}\right)u_{1}(t)+D_{2}\left(\alpha_{t}\right)u_{2}(t)+\sigma(t)\right]dW(t),\qquad t\geq0,\\
   X(0)&=x,\quad \alpha(0)=i,
   \end{aligned}
   \right.
 \end{equation}
where $b$, $\sigma\in L_{\mathbb{F}}^{2}(\mathbb{R}^{n})$. For $i\in\mathcal{S}$, $A(i),\text{ } C(i)\in\mathbb{R}^{n\times n}$, $B_{k}(i),\text{ } D_{k}(i)\in\mathbb{R}^{n\times m_{k}}$, $k=1,\text{ }2$. In the above, $X\equiv X(\cdot;x,i,u_{1},u_{2})$, valued in $\mathbb{R}^{n}$, is called the \emph{state process} and $u_{k}$, valued in $\mathbb{R}^{m_{k}}$, is called the \emph{control process} of player $k$. The cost functional of player $k$ is defined as
\begin{equation}\label{GLQ-cost}
\begin{aligned}
    J_{k}\left(x,i;u_{1},u_{2}\right)
    \triangleq \mathbb{E}\int_{0}^{\infty}&\left[
    \left<
    \left(
    \begin{matrix}
    Q^{k}(\alpha_{t}) & S_{1}^{k}(\alpha_{t})^{\top} & S_{2}^{k}(\alpha_{t})^{\top} \\
    S_{1}^{k}(\alpha_{t}) & R_{11}^{k}(\alpha_{t}) & R_{12}^{k}(\alpha_{t}) \\
    S_{2}^{k}(\alpha_{t}) & R_{21}^{k}(\alpha_{t}) & R_{22}^{k}(\alpha_{t})
    \end{matrix}
    \right)
    \left(
    \begin{matrix}
    X(t) \\
    u_{1}(t) \\
    u_{2}(t)
    \end{matrix}
    \right),
    \left(
    \begin{matrix}
    X(t) \\
    u_{1}(t) \\
    u_{2}(t)
    \end{matrix}
    \right)
    \right>\right.\\
    &\left.+2\left<\left(\begin{matrix}
    q^{k}(t) \\
    \rho_{1}^{k}(t) \\
    \rho_{2}^{k}(t)
    \end{matrix}
    \right),
    \left(\begin{matrix}
    X(t) \\
    u_{1}(t) \\
    u_{2}(t)
    \end{matrix}
    \right)\right>\right]dt,\quad k=1,2,
  \end{aligned}
\end{equation}
where $q^{k}\in L_{\mathbb{F}}^{2}(\mathbb{R}^{n})$, $\rho_{1}^{k}\in L_{\mathbb{F}}^{2}(\mathbb{R}^{m_{1}})$, $\rho_{2}^{k}\in L_{\mathbb{F}}^{2}(\mathbb{R}^{m_{2}})$  and for $i\in\mathcal{S}$,
\begin{equation}\label{GLQ-cost-coefficients}
\begin{aligned}
 &Q^{k}(i) \in \mathbb{S}^{n}, \quad R_{ll}^{k}(i) \in \mathbb{S}^{m_{l}}, \quad S_{l}^{k}(i)\in\mathbb{R}^{m_{l} \times n},
 \quad R_{12}^{k}(i)=R_{21}^{k}(i)^{\top}\in\mathbb{R}^{m_{1} \times m_{2}},\quad k,l=1,2.
 \end{aligned}
\end{equation}
Here, we do not require that $Q^{k}(i)$, $R_{11}^{k}(i)$ and $R_{22}^{k}(i)$ in the \eqref{GLQ-cost-coefficients} are (semi) positive definite matrices. Hence, we are about to solve an indefinite LQ-SDG problem. For notations simplicity, let $m=m_{1}+m_{2}$ and denote (for $k=1,2$, $i\in\mathcal{S}$)
\begin{equation}\label{GLQ-open-notation-state+cost}
\begin{aligned}
 &B(i)\triangleq\left(B_{1}(i),B_{2}(i)\right),\quad  D(i)\triangleq\left(D_{1}(i),D_{2}(i)\right),\quad
 S^{k}(i)^{\top}\triangleq\left( S_{1}^{k}(i)^{\top}, S_{2}^{k}(i)^{\top}\right),\\
 &R^{k}(i)\triangleq\left(\begin{matrix} R_{1}^{k}(i)\\R_{2}^{k}(i)\end{matrix}\right)
  \triangleq\left(\begin{matrix} R_{11}^{k}(i) & R_{12}^{k}(i)\\R_{21}^{k}(i)& R_{22}^{k}(i)\end{matrix}\right)\quad
  \rho^{k}\triangleq\left(\begin{matrix} \rho_{1}^{k}\\\rho_{2}^{k}\end{matrix}\right)\quad
  u\triangleq\left(\begin{matrix} u_{1}\\u_{2}\end{matrix}\right).
\end{aligned}
\end{equation}
Then the state process \eqref{GLQ-state} and cost functional \eqref{GLQ-cost} can be rewritten as
\begin{equation}\label{state-open-simplicity}
   \left\{
   \begin{aligned}
   dX(t)&=\left[A\left(\alpha_{t}\right)X(t)+B\left(\alpha_{t}\right)u(t)+b(t)\right]dt+\left[C\left(\alpha_{t}\right)X(t)+D\left(\alpha_{t}\right)u(t)+\sigma(t)\right]dW(t),\\
   X(0)&=x,\quad \alpha(0)=i,\quad t\geq0,
   \end{aligned}
   \right.
 \end{equation}
and for $k=1,2,$
\begin{equation}\label{GLQ-cost-2}
\begin{aligned}
    J_{k}\left(x,i;u \right)
   = \mathbb{E}\int_{0}^{\infty}&\left[
    \left<
    \left(
    \begin{matrix}
    Q^{k}(\alpha_{t})&S^{k}(\alpha_{t})^{\top}\\
    S^{k}(\alpha_{t})&R^{k}(\alpha_{t})
    \end{matrix}
    \right)
    \left(
    \begin{matrix}
    X(s)\\
    u(s)
    \end{matrix}
    \right),
    \left(
    \begin{matrix}
    X(s)\\
    u(s)
    \end{matrix}
    \right)
    \right>+2\left<\left(\begin{matrix}
    q^{k}(s)\\
    \rho^{k}(s)
    \end{matrix}
    \right),
    \left(\begin{matrix}
    X(s)\\
    u(s)
    \end{matrix}
    \right)\right>\right]dt.
  \end{aligned}
\end{equation}

Generally, for any given initial value $(x,i)\in\mathbb{R}^{n}\times\mathcal{S}$ and control pair $(u_{1},u_{2})\in L_{\mathbb{F}}^{2}(\mathbb{R}^{m_{1}})\times L_{\mathbb{F}}^{2}(\mathbb{R}^{m_{2}})$, the  state process $X(\cdot;x,i,u_{1},u_{2})$ is usually in $L_{\mathbb{F}}^{2,loc}(\mathbb{R}^{n})$. To ensure the cost functional \eqref{GLQ-cost} is well-defined,  for any $(x,i)\in \mathbb{R}^{n}\times\mathcal{S}$, we need to introduce the following admissible control pair set:
$$\mathcal{U}_{ad}(x,i)\triangleq\left\{(u_{1},u_{2})\in L_{\mathbb{F}}^{2}(\mathbb{R}^{m_{1}})\times L_{\mathbb{F}}^{2}(\mathbb{R}^{m_{2}}) \mid X(\cdot;x,i,u_{1},u_{2})\in L_{\mathbb{F}}^{2}(\mathbb{R}^{n})\right\}. $$
A pair $(u_{1},u_{2})\in \mathcal{U}_{ad}(x,i)$ is called an admissible control pair for the initial state $(x,i)$. Now, we formulate the non-zero-sum LQ-SDG problem as follows.\\
\textbf{Problem (M-GLQ).} For any given $(x,i)\in \mathbb{R}^{n}\times \mathcal{S}$, find a $(u_{1}^{*},u_{2}^{*})\in\mathcal{U}_{ad}(x,i)$ such that
\begin{equation}\label{GLQ-value-function}
    \left\{
    \begin{aligned}
& J_{1}\left(x,i;u_{1}^{*},u_{2}^{*}\right)=\inf_{(u_{1},u_{2}^{*})\in \mathcal{U}_{ad}(x,i)}J_{1}\left(x,i;u_{1},u_{2}^{*}\right),\\
& J_{2}\left(x,i;u_{1}^{*},u_{2}^{*}\right)=\inf_{(u_{1}^{*},u_{2})\in \mathcal{U}_{ad}(x,i)}J_{2}\left(x,i;u_{1}^{*},u_{2}\right).
\end{aligned}
    \right.
\end{equation}
In addition, if  $b=\sigma=q^{k}=0$, $\rho_{1}^{k}=0$, $\rho_{2}^{k}=0$,  then the corresponding admissible control pair set, state process, cost functional and problem are denoted by $\mathcal{U}_{ad}^{0}(x,i)$, $ X^{0}(\cdot;x,i,u_{1},u_{2})$, $J_{k}^{0}(x,i;u_{1},u_{2})$ and Problem (M-GLQ)$^0$,  respectively.

In the LQ-SDG problem, for any initial value $(x,i)$, we require that the control pair $(u_{1},u_{2})$ composed by two players' control must make the corresponding state process $X(\cdot;x,i,u_{1},u_{2})$ is in
$L_{\mathbb{F}}^{2}(\mathbb{R}^{n})$. That is to say, for a given initial value
$(x,i)$, if player $l$ select a control
$u_{l}\in L_{\mathbb{F}}^{2}(\mathbb{R}^{m_{l}})$, then player
$k$ ($k\neq l$) must select their control in the following set:
$$\mathcal{U}_{ad}^{k}(x,i;u_{l})\triangleq\left\{u_{k}\in L_{\mathbb{F}}^{2}(\mathbb{R}^{m_{k}})|X(\cdot;x,i,u_{k},u_{l})\in L_{\mathbb{F}}^{2}(\mathbb{R}^{n})\right\},\quad (k,l)\in\{(1,2),(2,1)\}.$$
For a homogeneous LQ-SDG problem, one can similarly define the corresponding set $\mathcal{U}_{ad}^{k,0}(x,i;u_{l})$ as follows:
$$\mathcal{U}_{ad}^{k,0}(x,i;u_{l})\triangleq\left\{u_{k}\in L_{\mathbb{F}}^{2}(\mathbb{R}^{m_{k}})|X^{0}(\cdot;x,i,u_{k},u_{l})\in L_{\mathbb{F}}^{2}(\mathbb{R}^{n})\right\},\quad (k,l)\in\{(1,2),(2,1)\}.$$
Specifically, if there exist a $\mathbf{\Theta}_{l}\in\mathcal{D}(\mathbb{R}^{m_{l}\times n})$ and $\nu_{l}\in L_{\mathbb{F}}^{2}(\mathbb{R}^{m_{l}})$ such that $u_{l}=\Theta_{l}(\alpha)X+\nu_{l},$
then for any $u_{k}\in L_{\mathbb{F}}^{2}(\mathbb{R}^{m_{k}})$, we denote the corresponding state process as $X(\cdot;x,i,u_{k},\mathbf{\Theta}_{l},\nu_{l})$, that is to say, $X(\cdot;x,i,u_{k},\mathbf{\Theta}_{l},\nu_{l})$ solves the following SDE:
\begin{equation}\label{state-semi-closed}
    \left\{
    \begin{aligned}
    dX(t)&=\left[\big(A(\alpha_{t})+B_{l}(\alpha_{t})\Theta_{l}(\alpha_{t})\big)X(t)+B_{l}(\alpha_{t})\nu_{l}(t)+B_{k}(\alpha_{t})u_{k}(t)+b(t)\right]dt\\
   &\quad+\left[\big(C(\alpha_{t})+D_{l}(\alpha_{t})\Theta_{l}(\alpha_{t})\big)X(t)+D_{l}(\alpha_{t})\nu_{l}(t)+D_{k}(\alpha_{t})u_{k}(t)+\sigma(t)\right]dW(t),\\
   X(0)&=x,\quad \alpha(0)=i, \quad t\geq0,
    \end{aligned}
    \right.
\end{equation}
In this case, we simplify the set $\mathcal{U}_{ad}^{k}(x,i;u_{l})$ as follows:
$$\mathcal{U}_{ad}^{k}(x,i;\mathbf{\Theta}_{l},\nu_{l})\triangleq \left\{u_{k}\in L_{\mathbb{F}}^{2}(\mathbb{R}^{m_{k}})|X(\cdot;x,i,u_{k},\mathbf{\Theta}_{l},\nu_{l})\in L_{\mathbb{F}}^{2}(\mathbb{R}^{n})\right\},\quad (k,l)\in\{(1,2),(2,1)\}.$$
Furthermore, if there exist a $\mathbf{\Theta}=(\mathbf{\Theta}_{1}^{\top},\mathbf{\Theta}_{2}^{\top})^{\top}\in\mathcal{D}(\mathbb{R}^{m\times n})$ and $\nu=(\nu_{1}^{\top},\nu_{2}^{\top})^{\top}\in L_{\mathbb{F}}^{2}(\mathbb{R}^{m})$ such that
$$u_{k}=\Theta_{k}(\alpha)X+\nu_{k},\quad k=1,2,$$
then we denote the corresponding state process as $X(\cdot;x,i,\mathbf{\Theta},\nu)$, which is the solution of the following closed-form SDE:
\begin{equation}\label{state-closed}
    \left\{
    \begin{aligned}
    dX(t)&=\left[\big(A(\alpha_{t})+B(\alpha_{t})\Theta(\alpha_{t})\big)X(t)+B(\alpha_{t})\nu(t)+b(t)\right]dt\\
   &\quad+\left[\big(C(\alpha_{t})+D(\alpha_{t})\Theta(\alpha_{t})\big)X(t)+D(\alpha_{t})\nu(t)+\sigma(t)\right]dW(t), \quad t\geq0,\\
   X(0)&=x,\quad \alpha(0)=i,
    \end{aligned}
    \right.
\end{equation}

If $b=\sigma=q^{k}=0$, $\rho_{1}^{k}=0$, $\rho_{2}^{k}=0$, we denote the corresponding solutions to \eqref{state-semi-closed} and \eqref{state-closed} as $X^{0}(\cdot;x,i,u_{k},\mathbf{\Theta}_{l},\nu_{l})$ and  $X^{0}(\cdot;x,i,\mathbf{\Theta},\nu)$, respectively, and, for $k=1,2$, define
\begin{equation}\label{cost-functional-GLQ}
\left\{
\begin{aligned}
    &J_{k}(x,i;u_{k},\mathbf{\Theta}_{l},\nu_{l})\triangleq J_{k}(x,i;u_{k},\Theta_{l}(\alpha)X+\nu_{l})\quad \text{ with }\quad X\equiv X(\cdot;x,i,u_{k},\mathbf{\Theta}_{l},\nu_{l}),\\
    &J_{k}(x,i;\mathbf{\Theta},\nu)\triangleq J_{k}(x,i;\Theta_{1}(\alpha)X+\nu_{1},\Theta_{2}(\alpha)X+\nu_{2})\quad  \text{ with }\quad X\equiv X(\cdot;x,i,\mathbf{\Theta},\nu),\\
    &J_{k}^{0}(x,i;u_{k},\mathbf{\Theta}_{l},\nu_{l})\triangleq J_{k}^{0}(x,i;u_{k},\Theta_{l}(\alpha)X^{0}+\nu_{l})\quad \text{ with }\quad X^{0}\equiv X^{0}(\cdot;x,i,u_{k},\mathbf{\Theta}_{l},\nu_{l}),\\
    &J_{k}^{0}(x,i;\mathbf{\Theta},\nu)\triangleq J_{k}^{0}(x,i;\Theta_{1}(\alpha)X^{0}+\nu_{1},\Theta_{2}(\alpha)X^{0}+\nu_{2})\quad  \text{ with }\quad X^{0}\equiv X^{0}(\cdot;x,i,\mathbf{\Theta},\nu).
\end{aligned}\right.
\end{equation}

Under the above notations, we can define the open-loop solvability and closed-loop solvability as follows.
\begin{definition}\label{def-open-loop-equilibrium}
A pair $(u_{1}^{*},u_{2}^{*})\equiv(u_{1}^{*}(\cdot;x,i),u_{2}^{*}(\cdot;x,i))\in\mathcal{U}_{ad}(x,i)$ is called an open-loop Nash equilibrium point of Problem (M-GLQ) for the initial state $(x,i)\in\mathbb{R}^{n}\times\mathcal{S}$ if
\begin{equation}\label{open-loop-Nash-equilibrium-point}
    \left\{
    \begin{aligned}
    &J_{1}(x,i;u_{1}^{*},u_{2}^{*})\leq J_{1}(x,i;u_{1},u_{2}^{*}),\quad \forall u_{1}\in \mathcal{U}_{ad}^{1}(x,i;u_{2}^{*}),\\
    &J_{2}(x,i;u_{1}^{*},u_{2}^{*})\leq J_{2}(x,i;u_{1}^{*},u_{2}),\quad \forall u_{2}\in \mathcal{U}_{ad}^{2}(x,i;u_{1}^{*}).\\
    \end{aligned}
    \right.
\end{equation}
Problem (M-GLQ) is said to be (uniquely) open-loop solvable at $(x,i)$ if it admits a (unique)  open-loop Nash equilibrium point $(u_{1}^{*},u_{2}^{*})\in\mathcal{U}_{ad}(x,i)$. In addition, Problem (M-GLQ) is said to be (uniquely) open-loop solvable if it is  (uniquely) open-loop solvable at all $(x,i)\in\mathbb{R}^{n}\times\mathcal{S}$.
In this case, we call that the Problem (M-GLQ) admits a closed-loop  representation if there exists a pair $(\mathbf{\Theta}^{*},\nu^{*})\in\mathcal{H}[A,C;B,D]_{\alpha}\times L_{\mathbb{F}}^{2}(\mathbb{R}^{m})$ such that:
  $$u^{*}(\cdot;x,i)\triangleq (u_{1}^{*}(\cdot;x,i)^{\top},u_{2}^{*}(\cdot;x,i)^{\top})^{\top}=\Theta^{*}(\alpha)X(\cdot;x,i,\mathbf{\Theta}^{*},\nu^{*})+\nu^{*},\quad \forall (x,i)\in\mathbb{R}^{n}\times \mathcal{S}.$$
In the above, the pair $(\mathbf{\Theta}^{*},\nu^{*})$ is called a closed-loop representation strategy of Problem (M-GLQ).
\end{definition}

\begin{definition}\label{def-closed-loop-equilibrium}
A 4-tuple $(\mathbf{\widehat{\Theta}_{1}},\widehat{\nu}_{1};\mathbf{\widehat{\Theta}_{2}},\widehat{\nu}_{2})\in\mathcal{D}\left(\mathbb{R}^{m_{1}\times n}\right)\times L_{\mathbb{F}}^{2}(\mathbb{R}^{m_{1}})\times \mathcal{D}\left(\mathbb{R}^{m_{2}\times n}\right)\times L_{\mathbb{F}}^{2}(\mathbb{R}^{m_{2}})$ is called a closed-loop Nash equilibrium strategy of Problem (M-GLQ) if
\begin{description}
  \item[(i)] $\mathbf{\widehat{\Theta}}= (\mathbf{\widehat{\Theta}_{1}}^{\top},\mathbf{\widehat{\Theta}_{2}}^{\top})^{\top}\in \mathcal{H}\left[A,C;B,D\right]_{\alpha}$,
  \item[(ii)] for any $(x,i)\in\mathbb{R}^{n}\times\mathcal{S}$, the following holds:
  \begin{equation}\label{closed-loop-Nash-equilibrium-point}
  \left\{
      \begin{aligned}
&J_{1}(x,i;\mathbf{\widehat{\Theta}},\widehat{\nu})\leq J_{1}(x,i;u_{1},\mathbf{\widehat{\Theta}}_{2},\widehat{\nu}_{2}),\quad \forall u_{1}\in\mathcal{U}_{ad}^{1}(x,i;\mathbf{\widehat{\Theta}}_{2},\widehat{\nu}_{2}),\\
&J_{2}(x,i;\mathbf{\widehat{\Theta}},\widehat{\nu})\leq J_{2}(x,i;\mathbf{\widehat{\Theta}}_{1},\widehat{\nu}_{1},u_{2}),\quad \forall u_{2}\in\mathcal{U}_{ad}^{2}(x,i;\mathbf{\widehat{\Theta}}_{1},\widehat{\nu}_{1}),
      \end{aligned}
      \right.
  \end{equation}
where $\widehat{\nu}\triangleq (\nu_{1}^{\top},\nu_{2}^{\top})^{\top}\in L_{\mathbb{F}}^{2}(\mathbb{R}^{m})$.
\end{description}
In the above, we denote
\begin{equation}\label{GLQ-outcome-closed}
\left\{
\begin{aligned}
&\widehat{u}_{1}(\cdot;x,i,\mathbf{\widehat{\Theta}},\widehat{\nu})\triangleq \widehat{\Theta}_{1}(\alpha)X(\cdot;x,i,\mathbf{\widehat{\Theta}},\widehat{\nu})+\widehat{\nu}_{1},\\
&\widehat{u}_{2}(\cdot;x,i,\mathbf{\widehat{\Theta}},\widehat{\nu})\triangleq \widehat{\Theta}_{2}(\alpha)X(\cdot;x,i,\mathbf{\widehat{\Theta}},\widehat{\nu})+\widehat{\nu}_{2},\\
\end{aligned}
\right.
\end{equation}
as the outcome of the closed-loop Nash equilibrium strategy $(\mathbf{\widehat{\Theta}},\widehat{\nu})$.
In addition,
we denote the closed-loop Nash equilibrium  value function of player $k$ as
\begin{equation}\label{GLQ-closed-value}
V_{k}(x,i)\triangleq J_{k}(x,i;\mathbf{\widehat{\Theta}},\widehat{\nu}),\quad k=1,2,
\end{equation}
and the corresponding homogeneous closed-loop Nash equilibrium  value function of Player $k$ is denoted by $V_{k}^{0}(x,i)$.
\end{definition}

Plugging \eqref{GLQ-outcome-closed} into \eqref{closed-loop-Nash-equilibrium-point} and comparing with \eqref{open-loop-Nash-equilibrium-point}, one can easily find that the outcome of the closed-loop Nash equilibrium strategy $(\mathbf{\widehat{\Theta}},\widehat{\nu})$ is not an open-loop Nash equilibrium point of Problem (M-GLQ). Hence, unlike the LQ problem, the closed-loop solvability can not imply the open-loop solvability in the Problem (M-GLQ).

Suppose that the coefficients in cost functional \eqref{GLQ-cost} satisfies
\begin{equation}\label{ZLQ-cost-notation}
\left\{
  \begin{aligned}
&Q^{1}(i)=-Q^{2}(i)\triangleq Q(i),\quad q^{1}=-q^{2}\triangleq q,\\
&S^{1}(i)= -S^{2}(i)\triangleq S(i)=\left(\begin{matrix} S_{1}(i)\\ S_{2}(i)\end{matrix}\right),\quad
\rho^{1}= -\rho^{2}\triangleq \rho\equiv \left(\begin{matrix} \rho_{1}\\\rho_{2}\end{matrix}\right),\\
&R^{1}(i) = -R^{2}(i)\triangleq R(i)\equiv \left(\begin{matrix} R_{1}(i)\\R_{2}(i)\end{matrix}\right)
  = \left(\begin{matrix} R_{11}(i) & R_{12}(i)\\R_{21}(i) & R_{22}(i)\end{matrix}\right).
\end{aligned}
\right.
\end{equation}
Then we introduce the following performance functional:
\begin{equation}\label{zero-sum-performance}
\begin{aligned}
   & J\left(x,i;u_{1},u_{2}\right)\\
    &\triangleq \mathbb{E}\int_{0}^{\infty}\left[
    \left<
    \left(
    \begin{matrix}
    Q(\alpha_{t})&S_{1}(\alpha_{t})^{\top}&S_{2}(\alpha_{t})^{\top}\\
    S_{1}(\alpha_{t})&R_{11}(\alpha_{t})&R_{12}(\alpha_{t})\\
    S_{2}(\alpha_{t})&R_{21}(\alpha_{t})&R_{22}(\alpha_{t})
    \end{matrix}
    \right)
    \left(
    \begin{matrix}
    X(t)\\
    u_{1}(t)\\
    u_{2}(t)
    \end{matrix}
    \right),
    \left(
    \begin{matrix}
    X(t)\\
    u_{1}(t)\\
    u_{2}(t)
    \end{matrix}
    \right)
    \right>+2
    \left<
    \left(
    \begin{matrix}
    q(t)\\
    \rho_{1}(t)\\
    \rho_{2}(t)
    \end{matrix}
    \right),
    \left(
    \begin{matrix}
    X(t)\\
    u_{1}(t)\\
    u_{2}(t)
    \end{matrix}
    \right)
    \right>\right]dt\\
    &=\mathbb{E}\int_{0}^{\infty}\left[
    \left<
    \left(
    \begin{matrix}
    Q(\alpha_{t})&S(\alpha_{t})^{\top}\\
    S(\alpha_{t})&R(\alpha_{t})
    \end{matrix}
    \right)
    \left(
    \begin{matrix}
    X(t)\\
    u(t)
    \end{matrix}
    \right),
    \left(
    \begin{matrix}
    X(t)\\
    u(t)
    \end{matrix}
    \right)
    \right>
+2\left<
    \left(
    \begin{matrix}
    q(t)\\
    \rho(t)
    \end{matrix}
    \right),
    \left(
    \begin{matrix}
    X(t)\\
    u(t)
    \end{matrix}
    \right)
    \right>\right]dt.
  \end{aligned}
\end{equation}
Obviously,
$$J_{1}\left(x,i;u_{1},u_{2}\right)=J\left(x,i;u_{1},u_{2}\right)=-J_{2}\left(x,i;u_{1},u_{2}\right).$$
We denote a non-zero-sum LQ-SDG problem with constraint \eqref{ZLQ-cost-notation} as a zero-sum LQ-SDG problem, whose mathematical definition is provided as follows.\\
\textbf{Problem (M-ZLQ).} For any given $(x,i)\in \mathbb{R}^{n}\times \mathcal{S}$, find a $(u_{1}^{*},u_{2}^{*})\in\mathcal{U}_{ad}(x,i)$ such that
\begin{equation}\label{ZLQ-value-function}
\sup_{(u_{1}^{*},u_{2})\in \mathcal{U}_{ad}(x,i)}J\left(x,i;u_{1}^{*},u_{2}\right)=J\left(x,i;u_{1}^{*},u_{2}^{*}\right)=\inf_{(u_{1},u_{2}^{*})\in \mathcal{U}_{ad}(x,i)}J\left(x,i;u_{1},u_{2}^{*}\right).\\
\end{equation}
In addition, if  $b=\sigma=q=0$, $\rho_{1}=0$, $\rho_{2}=0$,  then the corresponding performance functional and problem are denoted by  $J^{0}(x,i;u_{1},u_{2})$ and Problem (M-ZLQ)$^0$,  respectively.

Under the framework of zero-sum Nash equilibrium point game, player 1 wishes to minimize \eqref{zero-sum-performance} by selecting a control $u_{1}$, while player 2 wishes to maximize \eqref{zero-sum-performance} by selecting a control $u_{2}$. Hence, the performance functional \eqref{zero-sum-performance} represents the cost for Player 1 and the payoff for Player 2. One can define the $J(x,i;u_{k},\mathbf{\Theta}_{l},\nu_{l})$, $J(x,i;\mathbf{\Theta},\nu)$, $J^{0}(x,i;u_{k},\mathbf{\Theta}_{l},\nu_{l})$ and $J^{0}(x,i;\mathbf{\Theta},\nu)$ similar to \eqref{cost-functional-GLQ}. The open-loop solvability and closed-loop solvability of Problem (M-ZLQ) are defined as follows.
\begin{definition}\label{def-ZLQ-open-loop-equilibrium}
A pair $(u_{1}^{*},u_{2}^{*})\equiv (u_{1}^{*}(\cdot;x,i),u_{2}^{*}(\cdot;x,i))\in\mathcal{U}_{ad}(x,i)$ is called an open-loop saddle point of Problem (M-ZLQ) for the initial state $(x,i)\in\mathbb{R}^{n}\times\mathcal{S}$ if
\begin{equation}\label{ZLQ-open-loop-Nash-equilibrium-point}
    \begin{aligned}
    J\left(x,i;u_{1}^{*},u_{2}\right)\leq J\left(x,i;u_{1}^{*},u_{2}^{*}\right)\leq J\left(x,i;u_{1},u_{2}^{*}\right),\quad \forall \left(u_{1},u_{2}\right)\in \mathcal{U}_{ad}^{1}(x,i;u_{2}^{*})\times \mathcal{U}_{ad}^{2}(x,i;u_{1}^{*}).
    \end{aligned}
\end{equation}
Problem (M-ZLQ) is said to be (uniquely) open-loop solvable at $(x,i)$ if it admits a (unique)  open-loop saddle point $(u_{1}^{*},u_{2}^{*})\in\mathcal{U}_{ad}(x,i)$. In addition, Problem (M-ZLQ) is said to be (uniquely) open-loop solvable if it is  (uniquely) open-loop solvable at all $(x,i)\in\mathbb{R}^{n}\times\mathcal{S}$.
In this case, we call that the Problem (M-ZLQ) admits a closed-loop  representation if there exists a pair $(\mathbf{\Theta}^{*},\nu^{*})\in\mathcal{H}[A,C;B,D]_{\alpha}\times L_{\mathbb{F}}^{2}(\mathbb{R}^{m})$ such that:
  $$u^{*}(\cdot;x,i)=(u_{1}^{*}(\cdot;x,i)^{\top},u_{2}^{*}(\cdot;x,i)^{\top})^{\top}= \Theta^{*}(\alpha)X(\cdot;x,i,\mathbf{\Theta}^{*},\nu^{*})+\nu^{*},\quad \forall (x,i)\in\mathbb{R}^{n}\times \mathcal{S}.$$
   In the above, the pair $(\mathbf{\Theta}^{*},\nu^{*})$ is called a closed-loop representation strategy of Problem (M-ZLQ).
\end{definition}

\begin{definition}\label{def-ZLQ-closed-loop-equilibrium}
A 4-tuple $(\mathbf{\widehat{\Theta}_{1}},\widehat{\nu}_{1};\mathbf{\widehat{\Theta}_{2}},\widehat{\nu}_{2})\in\mathcal{D}\left(\mathbb{R}^{m_{1}\times n}\right)\times L_{\mathbb{F}}^{2}(\mathbb{R}^{m_{1}})\times \mathcal{D}\left(\mathbb{R}^{m_{2}\times n}\right)\times L_{\mathbb{F}}^{2}(\mathbb{R}^{m_{2}})$ is called a closed-loop Nash equilibrium strategy of Problem (M-ZLQ) if
\begin{description}
  \item[(i)] $\mathbf{\widehat{\Theta}}= (\mathbf{\widehat{\Theta}_{1}}^{\top},\mathbf{\widehat{\Theta}_{2}}^{\top})^{\top}\in \mathcal{H}\left[A,C;B,D\right]_{\alpha}$,
  \item[(ii)] for any $(x,i)\in\mathbb{R}^{n}\times\mathcal{S}$, the following holds:
  \begin{equation}\label{ZLQ-closed-loop-Nash-equilibrium-point}
      \begin{aligned}
&J(x,i;\mathbf{\widehat{\Theta}}_{1},\widehat{\nu}_{1},u_{2})\leq J(x,i;\mathbf{\widehat{\Theta}},\widehat{\nu})\leq J(x,i;u_{1},\mathbf{\widehat{\Theta}}_{2},\widehat{\nu}_{2}),\\
&\quad \forall (u_{1},u_{2})\in\mathcal{U}_{ad}^{1}(x,i;\mathbf{\widehat{\Theta}}_{2},\widehat{\nu}_{2})\times \mathcal{U}_{ad}^{2}(x,i;\mathbf{\widehat{\Theta}}_{1},\widehat{\nu}_{1}),
      \end{aligned}
  \end{equation}
where $\widehat{\nu}\equiv (\nu_{1}^{\top},\nu_{2}^{\top})^{\top}\in L_{\mathbb{F}}^{2}(\mathbb{R}^{m})$.
\end{description}
In this case, we denote
$$V(x,i)\triangleq J\left(x,i;\mathbf{\widehat{\Theta}},\widehat{\nu}\right),$$
as the closed-loop Nash equilibrium value function of Problem (M-ZLQ) and the corresponding homogeneous closed-loop Nash equilibrium value function is denoted by $V^{0}(x,i)$.
\end{definition}

\section{Non-zero-sum  Nash differential game}\label{section-GLQ}
We now begin to study the Problem (M-GLQ). Both open-loop solvability and closed-loop solvability are obtained in this section. We also construct the closed-loop representation of open-loop Nash equilibrium point when the Problem (M-GLQ) is open-loop solvable.
\subsection{Open-loop solvability for Problem (M-GLQ)}\label{subsection-GLQ-open}
We first discuss the open-loop solvability for Problem (M-GLQ).
The following result characterizes the open-loop Nash equilibrium point in terms of FBSDEs, whose proof depends mainly on Theorem $4.1$ in Wu et al. \cite{Wu-etal}.

\begin{theorem}\label{thm-GLQ-open-solvability}
 Suppose that systems $[A,C;B_{1},D_{1}]_{\alpha}$ and $[A,C;B_{2},D_{2}]_{\alpha}$ are $L^{2}$-stabilizable. Then $(u_{1}^{*},u_{2}^{*})\in\mathcal{U}_{ad}(x,i)$ is an open-loop Nash equilibrium point of Problem (M-GLQ) for initial value $(x,i)\in\mathbb{R}^{n}\times\mathcal{S}$ if and only if
 \begin{description}
   \item[(i)] The following convexity conditions hold:
   \begin{equation}\label{GLQ-convexity}
       \left\{
       \begin{array}{l}
         J_{1}^{0}(0,i;u_{1},0)\geq 0,\quad \forall u_{1}\in\mathcal{U}_{ad}^{1,0}(x,i;0), \quad \forall i\in\mathcal{S}, \\
         J_{2}^{0}(0,i;0,u_{2})\geq 0,\quad \forall u_{2}\in\mathcal{U}_{ad}^{2,0}(x,i;0),  \quad \forall i\in\mathcal{S},
       \end{array}
       \right.
   \end{equation}
   \item[(ii)] The adapted solution $\left(X^{*},Y_{k}^{*},Z_{k}^{*},\mathbf{\Gamma}_{k}^{*}\right)\in L_{\mathbb{F}}^{2}(\mathbb{R}^{n})\times L_{\mathbb{F}}^{2}(\mathbb{R}^{n})\times L_{\mathbb{F}}^{2}(\mathbb{R}^{n})\times\mathcal{D}\left(L_{\mathcal{P}}^{2}(\mathbb{R}^{n})\right)$ to the  FBSDEs
  \begin{equation}\label{FBSDEs-GLQ}
  \left\{
      \begin{aligned}
      dX^{*}(t)&=\left[A(\alpha_{t})X^{*}(t)+B(\alpha_{t})u^{*}(t)+b(t)\right]dt+\left[C(\alpha_{t})X^{*}(t)+D(\alpha_{t})u^{*}(t)+\sigma(t)\right]dW(t),\\
      dY_{k}^{*}(t)&=-\left[A(\alpha_{t})^{\top}Y_{k}^{*}(t)+C(\alpha_{t})^{\top}Z_{k}^{*}(t)+Q^{k}(\alpha_{t})X^{*}(t)+S^{k}(\alpha_{t})^{\top}u^{*}(t)+q^{k}(t)\right]dt\\
      &\quad+Z_{k}^{*}(t)dW(t)+\mathbf{\Gamma}_{k}^{*}(t)\cdot d\mathbf{\widetilde{N}}(t),\quad t\geq 0,\quad k=1,2,\\
      X^{*}(0)&=x,\quad\alpha_{0}=i,
      \end{aligned}
      \right.
  \end{equation}
  satisfies the  stationary condition:
  \begin{equation}\label{stationary-GLQ}
     B_{k}(\alpha_{t})^{\top}Y_{k}^{*}(t)+ D_{k}(\alpha_{t})^{\top}Z_{k}^{*}(t)+S_{k}^{k}(\alpha_{t})X^{*}(t)+R_{k}^{k}(\alpha_{t})u^{*}(t)+\rho_{k}^{k}(t)=0,\, k=1,2.
  \end{equation}
 \end{description}
\end{theorem}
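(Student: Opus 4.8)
The plan is to exploit the fact that, by its very definition, an open-loop Nash equilibrium point decouples into two one-player optimal control problems: with $u_2^*$ frozen, $u_1^*$ must minimize $u_1\mapsto J_1(x,i;u_1,u_2^*)$ over $\mathcal{U}_{ad}^1(x,i;u_2^*)$, and symmetrically $u_2^*$ must minimize $u_2\mapsto J_2(x,i;u_1^*,u_2)$ over $\mathcal{U}_{ad}^2(x,i;u_1^*)$. I would recast each of these one-player problems as an inhomogeneous infinite-horizon LQ control problem of exactly the type treated in Theorem $4.1$ of \cite{Wu-etal}, and then read off (i) and (ii) from the characterization given there.

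First I would fix $u_2=u_2^*$ and substitute it into \eqref{state-open-simplicity}: the state equation becomes an SDE controlled only by $u_1$, driven by the system $[A,C;B_1,D_1]_\alpha$ with inhomogeneous terms $B_2(\alpha_t)u_2^*+b\in L_{\mathbb{F}}^2(\mathbb{R}^n)$ in the drift and $D_2(\alpha_t)u_2^*+\sigma\in L_{\mathbb{F}}^2(\mathbb{R}^n)$ in the diffusion. Expanding \eqref{GLQ-cost-2} for $k=1$ and discarding the terms not involving $u_1$, one finds that $u_1\mapsto J_1(x,i;u_1,u_2^*)$ is the cost of an LQ problem with quadratic weights $(Q^1,S_1^1,R_{11}^1)$ and linear terms $q^1+S_2^1(\alpha_t)^\top u_2^*$ (in $X$) and $\rho_1^1+R_{12}^1(\alpha_t)u_2^*$ (in $u_1$), up to a $u_1$-independent constant. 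Since $[A,C;B_1,D_1]_\alpha$ is $L^2$-stabilizable and $\mathcal{U}_{ad}^1(x,i;u_2^*)$ is precisely the admissible class for this LQ problem, Theorem $4.1$ of \cite{Wu-etal} gives that $u_1^*$ is optimal if and only if: (a) the associated homogeneous cost is nonnegative, which upon setting $u_2=0$ (hence killing all the inhomogeneous and extra linear terms) is exactly $J_1^0(0,i;u_1,0)\geq0$ for all $u_1\in\mathcal{U}_{ad}^{1,0}(x,i;0)$ and all $i\in\mathcal{S}$, i.e. the first line of \eqref{GLQ-convexity}; and (b) the coupled FBSDE formed by the forward state equation and the adjoint BSDE for $(Y_1^*,Z_1^*,\mathbf{\Gamma}_1^*)$ --- which is exactly the $k=1$ component of \eqref{FBSDEs-GLQ}, since $S^1(\alpha_t)^\top u^*=S_1^1(\alpha_t)^\top u_1^*+S_2^1(\alpha_t)^\top u_2^*$ absorbs simultaneously the genuine cross term and the extra linear contribution --- admits an adapted solution in the stated spaces satisfying $B_1(\alpha_t)^\top Y_1^*+D_1(\alpha_t)^\top Z_1^*+S_1^1(\alpha_t)X^*+R_1^1(\alpha_t)u^*+\rho_1^1=0$ (the $k=1$ line of \eqref{stationary-GLQ}), where one uses $R_{11}^1u_1^*+R_{12}^1u_2^*=R_1^1u^*$.

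Next I would run the same argument with $u_1^*$ frozen and $u_2$ varying, invoking $L^2$-stabilizability of $[A,C;B_2,D_2]_\alpha$, to obtain the $k=2$ analogues: the second line of \eqref{GLQ-convexity} together with the $k=2$ components of \eqref{FBSDEs-GLQ}--\eqref{stationary-GLQ}. Because the two one-player optimality systems share the same forward component $X^*\equiv X(\cdot;x,i,u_1^*,u_2^*)$ driven by the same $u^*=(u_1^{*\top},u_2^{*\top})^\top$, they assemble into the single system \eqref{FBSDEs-GLQ}, and conversely any adapted solution of \eqref{FBSDEs-GLQ}--\eqref{stationary-GLQ} restricts to solutions of both one-player systems. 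Hence $(u_1^*,u_2^*)$ is an open-loop Nash equilibrium point if and only if both one-player problems are solved at $(u_1^*,u_2^*)$, which is equivalent to (i) and (ii) together; both implications pass through the ``if and only if'' in Theorem $4.1$ of \cite{Wu-etal}, so no separate argument for either direction is needed.

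I expect the only real obstacle to be the bookkeeping in the reduction: one must verify carefully that, after freezing the opponent's control, the resulting one-player LQ problem has admissible class, adjoint equation, convexity condition and stationarity condition matching verbatim (modulo the identifications above) those appearing in Theorem $4.1$ of \cite{Wu-etal}. In particular the player-$1$ data $S_2^1$, $R_{12}^1$ and $\rho_2^1$, which do not appear explicitly in the $k=1$ equations, must be shown to be correctly accounted for --- either inside the adjoint/stationarity relations through the vector $u^*$ and the matrices $S^1$, $R_1^1$, or as harmless $u_1$-independent constants in the cost --- and similarly for player $2$; and one must check that $\mathcal{U}_{ad}^1(x,i;u_2^*)$ (resp. $\mathcal{U}_{ad}^{1,0}(x,i;0)$) really coincides with the admissible set of the reduced (resp. homogeneous reduced) LQ problem, which is where $L^2$-stabilizability of $[A,C;B_1,D_1]_\alpha$ is used.
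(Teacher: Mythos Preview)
Your proposal is correct and follows essentially the same approach as the paper: reduce the Nash equilibrium characterization to two one-player infinite-horizon LQ problems by freezing the opponent's control, identify each as an instance of the setting in Theorem~4.1 of \cite{Wu-etal}, and read off the convexity condition and the FBSDE/stationarity characterization for each player. The paper's proof is slightly more terse --- it simply writes out the reduced cost functionals $\bar J_k(x,i;u_k)$ explicitly and notes $\bar J_k^0(0,i;u_k)=J_k^0(0,i;u_k,0)$ --- but the substance, including the bookkeeping of how the cross terms $S_2^1$, $R_{12}^1$, $\rho_2^1$ are absorbed, is exactly what you outline.
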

\begin{proof}
For any given $(u_{1}^{*},u_{2}^{*})\in\mathcal{U}_{ad}(x,i)$, let $X_{1}\equiv X(\cdot;x,i,u_{1},u_{2}^{*})$, $X_{2}\equiv X(\cdot;x,i,u_{1}^{*},u_{2})$,
 \begin{align*}
&\bar{J}_{1}(x,i;u_{1})\!\triangleq\! \mathbb{E}\int_{0}^{\infty}\!\left[\!\left<\!\left(\!\begin{matrix}
Q^{1}(\alpha) & S_{1}^{1}(\alpha)^{\top}\\
S_{1}^{1}(\alpha) & R_{11}^{1}(\alpha)\end{matrix}\!\right)\!\left(\!\begin{matrix}
X_{1}\\u_{1}\end{matrix}\!\right),
\left(\!\begin{matrix}
X_{1}\\u_{1}\end{matrix}\!\right)\!\right>
\!+\!2\left<\!\left(\!\begin{matrix}
q^{1}+S_{2}^{1}(\alpha)^{\top}u_{2}^{*}\\
\rho_{1}^{1}+R_{12}^{1}(\alpha)u_{2}^{*}\end{matrix}\!\right),
\left(\!\begin{matrix}
X_{1}\\u_{1}\end{matrix}\!\right)\!\right>\!\right]dt,
  \end{align*}
and
\begin{align*}
&\bar{J}_{2}(x,i;u_{2})\!\triangleq\! \mathbb{E}\int_{0}^{\infty}\left[\!\left<\!\left(\!\begin{matrix}
Q^{2}(\alpha) & S_{2}^{2}(\alpha)^{\top}\\
S_{2}^{2}(\alpha) & R_{22}^{2}(\alpha)\end{matrix}\!\right)\left(\!\begin{matrix}
X_{2}\\u_{2}\end{matrix}\!\right),
\left(\!\begin{matrix}
X_{2}\\u_{2}\end{matrix}\!\right)\!\right>
\!+\!2\left<\!\left(\!\begin{matrix}
q^{2}+S_{1}^{2}(\alpha)^{\top}u_{1}^{*}\\
\rho_{2}^{2}+R_{21}^{2}(\alpha)u_{1}^{*}\end{matrix}\!\right),
\left(\!\begin{matrix}
X_{2}\\u_{2}\end{matrix}\!\right)\!\right>\!\right]dt.
  \end{align*}
Then,
  \begin{align*}
& J_{1}(x,i;u_{1},u_{2}^{*}) =\mathbb{E}\int_{0}^{\infty}\big<R_{22}^{1}(\alpha)u_{2}^{*}+2\rho_{2}^{1},u_{2}^{*}\big>dt +\bar{J}_{1}(x,i;u_{1}),\\
& J_{2}(x,i;u_{1}^{*},u_{2}) =\mathbb{E}\int_{0}^{\infty}\big<R_{11}^{2}(\alpha)u_{1}^{*}+2\rho_{1}^{2},u_{1}^{*}\big>dt +\bar{J}_{2}(x,i;u_{2}).
  \end{align*}
  Obviously, a pair $(u_{1}^{*},u_{2}^{*})$ is an open-loop Nash equilibrium point of Problem (M-GLQ) for initial value $(x,i)$ if and only if, for $k=1,2$, $u_{k}^{*}$ is an open-loop optimal control for an LQ control problem with state constraint $X_{k}$ and cost functional $\bar{J}_{k}(x,i;u_{k})$.
  Note that
 \begin{align*}
\bar{J}_{k}^{0}(0,i;u_{k})&\triangleq \mathbb{E}\int_{0}^{\infty}\left<\left(\begin{matrix}
Q^{k}(\alpha) & S_{k}^{k}(\alpha)^{\top}\\
S_{k}^{k}(\alpha) & R_{kk}^{k}(\alpha)\end{matrix}\right)\left(\begin{matrix}
X_{k}^{0}\\u_{k}\end{matrix}\right),
\left(\begin{matrix}
X_{k}^{0}\\u_{k}\end{matrix}\right)\right>dt
=J^{0}(0,i;u_{k},0),\quad k=1,2,
  \end{align*}
where $X_{k}^{0}=X^{0}(\cdot;0,i,u_{k},0)$.
Hence,  the desired result follows from the Theorem  $4.1$ in Wu et al. \cite{Wu-etal}.
\end{proof}
\begin{remark}\label{rmk-GLQ-open-FBSDEs-1}
Obviously, a sufficient condition to ensure that the convexity condition \eqref{GLQ-convexity} holds is
\begin{equation}\label{GLQ-convexity-condition}
\left(\begin{matrix}
Q^{1}(i) & S_{1}^{1}(i)^{\top}\\
S_{1}^{1}(i) & R_{11}^{1}(i)\end{matrix}\right)\geq 0,\quad
\left(\begin{matrix}
Q^{2}(i) & S_{2}^{2}(i)^{\top}\\
S_{2}^{2}(i) & R_{22}^{2}(i)\end{matrix}\right)\geq 0,\quad \forall i\in\mathcal{S},
\end{equation}
Li et al. \cite{Li-Shi-Yong-2021-ID-MFLQ-IF} has investigated an LQ-SDG problem with a mean field in infinite horizon. Using the variational method, they also obtain a similar result. However, the convexity condition summarized in \cite{Li-Shi-Yong-2021-ID-MFLQ-IF} is too abstract and can not be verified directly. Hence, the condition \eqref{GLQ-convexity-condition} can be considered as an improvement compared with \cite{Li-Shi-Yong-2021-ID-MFLQ-IF}.
\end{remark}

Note that \eqref{FBSDEs-GLQ} together with \eqref{stationary-GLQ} constitute a coupled FBSDEs, which is regarded as the optimality system of Problem (M-GLQ). Our next task is to decouple the system \eqref{FBSDEs-GLQ}-\eqref{stationary-GLQ} and find the closed-loop representation of open-loop Nash equilibrium point for Problem (M-GLQ). To this end, we introduce the following notations:
\begin{equation}\label{GLQ-open-notation-2}
 \begin{aligned}
 &\mathbb{A}(i)=\left(\begin{matrix}A(i) & 0 \\ 0 & A(i) \end{matrix}\right), \quad
 \mathbb{C}(i)=\left(\begin{matrix}C(i) & 0 \\ 0 & C(i) \end{matrix}\right)\in \mathbb{R}^{2n\times 2n}, \quad
 \mathbb{I}_{m}=\left(\begin{matrix}I_{m}\\I_{m} \end{matrix}\right)\in\mathbb{R}^{2m\times m},\\
 &\mathbb{B}(i)=\left(\begin{matrix}B(i) & 0 \\ 0 & B(i) \end{matrix}\right), \quad
 \mathbb{D}(i)=\left(\begin{matrix}D(i) & 0 \\ 0 & D(i) \end{matrix}\right)\in \mathbb{R}^{2n\times 2m}, \quad \mathbb{I}_{n}=\left(\begin{matrix}I_{n}\\I_{n} \end{matrix}\right)\in\mathbb{R}^{2n\times n},\\
 &\mathbb{Q}(i)=\left(\begin{matrix}Q^{1}(i) & 0 \\ 0 & Q^{2}(i) \end{matrix}\right)\in \mathbb{S}^{2n}, \quad
 \mathbb{S}(i)=\left(\begin{matrix}S^{1}(i) & 0 \\ 0 & S^{2}(i) \end{matrix}\right)\in \mathbb{R}^{2m\times 2n}, \\
 &\mathbb{R}(i)=\left(\begin{matrix}R^{1}(i) & 0 \\ 0 & R^{2}(i) \end{matrix}\right)\in \mathbb{S}^{2m}, \quad
 q=\left(\begin{matrix}q^{1} \\ q^{2} \end{matrix}\right)\in L_{\mathbb{F}}^{2}(\mathbb{R}^{2n}), \quad
 \rho=\left(\begin{matrix}\rho^{1} \\ \rho^{2} \end{matrix}\right)\in L_{\mathbb{F}}^{2}(\mathbb{R}^{2m}).
 \end{aligned}
\end{equation}
Then the FBSDEs \eqref{FBSDEs-GLQ} and corresponding stationary condition \eqref{stationary-GLQ} can be rewritten as:
\begin{equation}\label{FBSDEs-GLQ-2}
  \left\{
      \begin{aligned}
      d\mathbb{X}^{*}(t)&=\left[\mathbb{A}(\alpha)\mathbb{X}^{*}+\mathbb{B}(\alpha)\mathbb{I}_{m}u^{*}+\mathbb{I}_{n}b\right]dt
      +\left[\mathbb{C}(\alpha)\mathbb{X}^{*}+\mathbb{D}(\alpha)\mathbb{I}_{m}u^{*}+\mathbb{I}_{n}\sigma\right]dW(t),\\
      d\mathbb{Y}^{*}(t)&=-\left[\mathbb{A}(\alpha)^{\top}\mathbb{Y}^{*}+\mathbb{C}(\alpha)^{\top}\mathbb{Z}^{*}+\mathbb{Q}(\alpha)\mathbb{X}^{*}+\mathbb{S}(\alpha)^{\top}\mathbb{I}_{m}u^{*}+q\right]dt+\mathbb{Z}^{*}dW(t)+\mathbf{\Gamma}^{*}\cdot d\mathbf{\widetilde{N}}(t),\\
      \mathbb{X}^{*}(0)&=\mathbb{I}_{n}x,\quad\alpha_{0}=i,\quad t\geq 0,
      \end{aligned}
      \right.
  \end{equation}
  and
\begin{equation}\label{stationary-GLQ-2}
     \mathbb{J}^{\top}\left\{\mathbb{B}(\alpha_{t})^{\top}\mathbb{Y}^{*}(t)+ \mathbb{D}(\alpha_{t})^{\top}\mathbb{Z}^{*}(t)+\mathbb{S}(\alpha_{t})\mathbb{X}^{*}(t)+\mathbb{R}(\alpha_{t})\mathbb{I}_{m}u^{*}(t)+\rho(t)\right\}=0,\quad a.e.\quad a.s.,
  \end{equation}
where
  \begin{align*}
  &\mathbb{X}^{*}=\mathbb{I}_{n}X^{*},\quad \mathbb{Y}^{*}=\left(\begin{matrix}Y_{1}^{*}\\Y_{2}^{*} \end{matrix}\right)\in L_{\mathbb{F}}^{2}(\mathbb{R}^{2n}),\quad
  \mathbb{Z}^{*}=\left(\begin{matrix}Z_{1}^{*}\\Z_{2}^{*} \end{matrix}\right)\in L_{\mathbb{F}}^{2}(\mathbb{R}^{2n}),\\
  &\mathbf{\Gamma}^{*}=\left(\begin{matrix}\mathbf{\Gamma}_{1}^{*}\\\mathbf{\Gamma}_{2}^{*} \end{matrix}\right)\in\mathcal{D}\left( L_{\mathcal{P}}^{2}(\mathbb{R}^{2n})\right),\quad
  \mathbb{J}^{\top}=\left(\begin{matrix}
  I_{m_{1}} & 0_{m_{1}\times m_{2}} & 0_{m_{1}\times m_{1}} & 0_{m_{1}\times m_{2}}\\
  0_{m_{2}\times m_{1}} & 0_{m_{2}\times m_{2}} & 0_{m_{2}\times m_{1}} & I_{m_{2}} \end{matrix}\right)\in\mathbb{R}^{m\times 2m}.
  \end{align*}

Now, we use the well-known four-step method to decouple the FBSDEs \eqref{FBSDEs-GLQ-2}-\eqref{stationary-GLQ-2}.
We first take the following ansatz:
\begin{equation}\label{decouple-ansatz}
    \mathbb{Y}^{*}=\mathbb{P}(\alpha)\mathbb{X}^{*}+\eta,
\end{equation}
where, for any $i\in\mathcal{S}$,
\begin{equation}
    \mathbb{P}(i)=\left(\begin{matrix}P_{1}(i) & 0\\ 0 & P_{2}(i)\end{matrix}\right)\in \mathbb{R}^{2n\times 2n},\quad
    \eta=\left(\begin{matrix}\eta_{1}\\\eta_{1} \end{matrix}\right)\in L_{\mathbb{F}}^{2}(\mathbb{R}^{2n}).
\end{equation}
Additionally,  $(\eta,\zeta,\mathbf{z})\in L_{\mathbb{F}}^{2}(\mathbb{R}^{2n})\times L_{\mathbb{F}}^{2}(\mathbb{R}^{2n})\times \mathcal{D}\left(L_{\mathcal{P}}^{2}(\mathbb{R}^{2n})\right) $ solves the BSDE:
\begin{equation}
    d\eta(t)=\beta(t)dt+\zeta(t)dW(t)+\mathbf{z}(t)\cdot d\widetilde{\mathbf{N}}(t),\quad t\geq 0,
\end{equation}
where $\beta$ is an undetermined process. Applying It\^o's rule to \eqref{decouple-ansatz} and comparing with \eqref{FBSDEs-GLQ-2}, one has
\begin{equation}\label{decouple-1}
\begin{aligned}
0&=\mathbb{A}(\alpha_{t})^{\top}\mathbb{Y}^{*}(t)+\mathbb{C}(\alpha_{t})^{\top}\mathbb{Z}^{*}(t)+\mathbb{Q}(\alpha_{t})\mathbb{X}^{*}(t)+\mathbb{S}(\alpha_{t})^{\top}\mathbb{I}_{m}u^{*}(t)+q(t)\\
&\quad +\mathbb{P}(\alpha_{t})\left[\mathbb{A}(\alpha_{t})\mathbb{X}^{*}(t)+\mathbb{B}(\alpha_{t})\mathbb{I}_{m}u^{*}(t)+\mathbb{I}_{n}b(t)\right]+\sum_{j=1}^{L}\pi_{\alpha_{t}j}\mathbb{P}(j)\mathbb{X}^{*}(t)+\beta(t),
\end{aligned}
\end{equation}
\begin{equation}\label{decouple-2}
\begin{aligned}
\mathbb{Z}^{*}(t)&=\mathbb{P}(\alpha_{t})\left[\mathbb{C}(\alpha_{t})\mathbb{X}^{*}(t)+\mathbb{D}(\alpha_{t})\mathbb{I}_{m}u^{*}(t)+\mathbb{I}_{n}\sigma(t)\right]+\zeta(t),
\end{aligned}
\end{equation}
and
\begin{equation}\label{decouple-3}
\begin{aligned}
\Gamma_{j}^{*}(t)&=\left[\mathbb{P}(j)-\mathbb{P}(\alpha_{t-})\right]\mathbb{X}^{*}(t-)+z_{j}(t),\quad j\in\mathcal{S}.
\end{aligned}
\end{equation}
Plugging \eqref{decouple-ansatz} and \eqref{decouple-2} into \eqref{stationary-GLQ-2}, we have
\begin{equation}\label{decouple-4}
  \begin{aligned}
0&=\mathbb{J}^{\top}\left\{\mathbb{B}(\alpha_{t})^{\top}\left[\mathbb{P}(\alpha_{t})\mathbb{X}^{*}(t)+\eta(t)\right]+\mathbb{S}(\alpha_{t})\mathbb{X}^{*}(t)+\mathbb{R}(\alpha_{t})\mathbb{I}_{m}u^{*}(t)+\rho(t)\right.\\
&\quad\left.+ \mathbb{D}(\alpha_{t})^{\top}\left[\mathbb{P}(\alpha_{t})\left(\mathbb{C}(\alpha_{t})\mathbb{X}^{*}(t)+\mathbb{D}(\alpha_{t})\mathbb{I}_{m}u^{*}(t)+\mathbb{I}_{n}\sigma(t)\right)+\zeta(t)\right] \right\}  \\
&=\mathbb{J}^{\top}\left\{ \big[\mathbb{B}(\alpha_{t})^{\top}\mathbb{P}(\alpha_{t})+ \mathbb{D}(\alpha_{t})^{\top}\mathbb{P}(\alpha_{t})\mathbb{C}(\alpha_{t})+\mathbb{S}(\alpha_{t})\big]\mathbb{X}^{*}(t)+\mathbb{B}(\alpha_{t})^{\top}\eta(t)\right.\\
&\quad\left. + \mathbb{D}(\alpha_{t})^{\top}\zeta(t)+ \mathbb{D}(\alpha_{t})^{\top}\mathbb{P}(\alpha_{t})\mathbb{I}_{n}\sigma(t)+\rho(t)\right\}+\Sigma(\mathbb{P},\alpha_{t})u^{*}(t),
\end{aligned}
\end{equation}
where
\begin{equation}\label{decouple-Sigma}
\Sigma(\mathbb{P},i)=\mathbb{J}^{\top}\left[\mathbb{R}(i)+ \mathbb{D}(i)^{\top}\mathbb{P}(i)\mathbb{D}(i)\right]\mathbb{I}_{m}.
\end{equation}
In the following, we assume that $\Sigma(\mathbb{P},i)$ is invertible for any $i\in\mathcal{S}$. Then, it follows from \eqref{decouple-4} that
\begin{equation}\label{decouple-u}
\begin{aligned}
u^{*}(t)&=-\Sigma(\mathbb{P},\alpha_{t})^{-1}\mathbb{J}^{\top}\left\{\big[\mathbb{B}(\alpha_{t})^{\top}\mathbb{P}(\alpha_{t})+ \mathbb{D}(\alpha_{t})^{\top}\mathbb{P}(\alpha_{t})\mathbb{C}(\alpha_{t})+\mathbb{S}(\alpha_{t})\big]\mathbb{X}^{*}(t)+\mathbb{B}(\alpha_{t})^{\top}\eta(t)\right.\\
&\quad\left. + \mathbb{D}(\alpha_{t})^{\top}\zeta(t)+ \mathbb{D}(\alpha_{t})^{\top}\mathbb{P}(\alpha_{t})\mathbb{I}_{n}\sigma(t)+\rho(t)\right\}.
\end{aligned}
\end{equation}
Substituting \eqref{decouple-u}, \eqref{decouple-2} and \eqref{decouple-ansatz} into \eqref{decouple-1}, one can easily obtain
\begin{align*}
0
&=\big[\mathbb{A}(\alpha_{t})^{\top}\mathbb{P}(\alpha_{t})+\mathbb{P}(\alpha_{t})\mathbb{A}(\alpha_{t})+\mathbb{C}(\alpha_{t})^{\top}\mathbb{P}(\alpha_{t})\mathbb{C}(\alpha_{t})+\mathbb{Q}(\alpha_{t})+\sum_{j=1}^{L}\pi_{\alpha_{t}j}\mathbb{P}(j)\big]\mathbb{X}^{*}(t)\\
&\quad +\big[\mathbb{P}(\alpha_{t})\mathbb{B}(\alpha_{t})+\mathbb{C}(\alpha_{t})^{\top}\mathbb{P}(\alpha_{t})\mathbb{D}(\alpha_{t})+\mathbb{S}(\alpha_{t})^{\top}\big]\mathbb{I}_{m}u^{*}(t)+\mathbb{A}(\alpha_{t})^{\top}\eta(t)+\mathbb{C}(\alpha_{t})^{\top}\zeta(t)\\
&\quad +\mathbb{C}(\alpha_{t})^{\top}\mathbb{P}(\alpha_{t})\mathbb{I}_{n}\sigma(t)+q(t)+\mathbb{P}(\alpha_{t})\mathbb{I}_{n}b(t)+\beta(t)\\
&=\Big\{\mathbb{A}(\alpha_{t})^{\top}\mathbb{P}(\alpha_{t})+\mathbb{P}(\alpha_{t})\mathbb{A}(\alpha_{t})+\mathbb{C}(\alpha_{t})^{\top}\mathbb{P}(\alpha_{t})\mathbb{C}(\alpha_{t})+\mathbb{Q}(\alpha_{t})+\sum_{j=1}^{L}\pi_{\alpha_{t}j}\mathbb{P}(j)\\
&\quad-\big[\mathbb{P}(\alpha_{t})\mathbb{B}(\alpha_{t})+\mathbb{C}(\alpha_{t})^{\top}\mathbb{P}(\alpha_{t})\mathbb{D}(\alpha_{t})+\mathbb{S}(\alpha_{t})^{\top}\big]\mathbb{I}_{m}\Sigma(\mathbb{P},\alpha_{t})^{-1}\mathbb{J}^{\top}\\
&\quad\times\big[\mathbb{B}(\alpha_{t})^{\top}\mathbb{P}(\alpha_{t})+ \mathbb{D}(\alpha_{t})^{\top}\mathbb{P}(\alpha_{t})\mathbb{C}(\alpha_{t})+\mathbb{S}(\alpha_{t})\big]\Big\}\mathbb{X}^{*}(t)\\
&\quad+\mathbb{A}(\alpha_{t})^{\top}\eta(t)+\mathbb{C}(\alpha_{t})^{\top}\zeta(t)+\mathbb{C}(\alpha_{t})^{\top}\mathbb{P}(\alpha_{t})\mathbb{I}_{n}\sigma(t)+q(t)+\mathbb{P}(\alpha_{t})\mathbb{I}_{n}b(t)+\beta(t)\\
&\quad -\big[\mathbb{P}(\alpha_{t})\mathbb{B}(\alpha_{t})+\mathbb{C}(\alpha_{t})^{\top}\mathbb{P}(\alpha_{t})\mathbb{D}(\alpha_{t})+\mathbb{S}(\alpha_{t})^{\top}\big]\mathbb{I}_{m}\Sigma(\mathbb{P},\alpha_{t})^{-1}\mathbb{J}^{\top}\\
&\quad\times\big[\mathbb{B}(\alpha_{t})^{\top}\eta(t)+ \mathbb{D}(\alpha_{t})^{\top}\zeta(t)
 +\mathbb{D}(\alpha_{t})^{\top}\mathbb{P}(\alpha_{t})\mathbb{I}_{n}\sigma(t)+\rho(t)\big],
\end{align*}
which implies that
\begin{equation}\label{decouple-P}
   \begin{aligned}
   0&=\mathbb{A}(i)^{\top}\mathbb{P}(i)+\mathbb{P}(i)\mathbb{A}(i)+\mathbb{C}(i)^{\top}\mathbb{P}(i)\mathbb{C}(i)+\mathbb{Q}(i)+\sum_{j=1}^{L}\pi_{ij}\mathbb{P}(j)\\
&\quad-\big[\mathbb{P}(i)\mathbb{B}(i)+\mathbb{C}(i)^{\top}\mathbb{P}(i)\mathbb{D}(i)+\mathbb{S}(i)^{\top}\big]\mathbb{I}_{m}\Sigma(\mathbb{P},i)^{-1}\mathbb{J}^{\top}\\
&\quad\times\big[\mathbb{B}(i)^{\top}\mathbb{P}(i)+\mathbb{D}(i)^{\top}\mathbb{P}(i)\mathbb{C}(i)+\mathbb{S}(i)
\big],\quad i\in\mathcal{S},
   \end{aligned}
\end{equation}
and
\begin{equation}\label{decouple-eta}
   \begin{aligned}
   d\eta(t)&=-\Big\{\mathbb{A}(\alpha_{t})^{\top}\eta(t)+\mathbb{C}(\alpha_{t})^{\top}\zeta(t)+\mathbb{C}(\alpha_{t})^{\top}\mathbb{P}(\alpha_{t})\mathbb{I}_{n}\sigma(t)+q(t)+\mathbb{P}(\alpha_{t})\mathbb{I}_{n}b(t)\\
&\quad -\big[\mathbb{P}(\alpha_{t})\mathbb{B}(\alpha_{t})+\mathbb{C}(\alpha_{t})^{\top}\mathbb{P}(\alpha_{t})\mathbb{D}(\alpha_{t})+\mathbb{S}(\alpha_{t})^{\top}\big]\mathbb{I}_{m}\Sigma(\mathbb{P},\alpha_{t})^{-1}\mathbb{J}^{\top}\\
&\quad\times\big[\mathbb{B}(\alpha_{t})^{\top}\eta(t)+ \mathbb{D}(\alpha_{t})^{\top}\zeta(t)+\mathbb{D}(\alpha_{t})^{\top}\mathbb{P}(\alpha_{t})\mathbb{I}_{n}\sigma(t)+\rho(t)\big]\Big\}dt\\
 &\quad+\zeta(t)dW(t)+\mathbf{z}(t)\cdot d\mathbf{\widetilde{N}}(t),\quad t\geq 0.
   \end{aligned}
\end{equation}
To sum up, we have the following result.
\begin{theorem}\label{thm-GLQ-open-closed}
Suppose that systems $[A,C;B_{1},D_{1}]_{\alpha}$ and $[A,C;B_{2},D_{2}]_{\alpha}$ are $L^{2}$-stablizable. If the following conditions hold:
\begin{description}
\item[(i)] The convexity condition \eqref{GLQ-convexity} holds;
  \item[(ii)] The CAREs \eqref{decouple-P}  admits a solution $\mathbb{P}\in\mathcal{D}\left(\mathbb{R}^{2n\times 2n}\right)$ such that $\Sigma(\mathbb{P},i)$ defined in \eqref{decouple-Sigma} is invertible and $\mathbf{\Theta}^{*}=\left[\Theta^{*}(1),\Theta^{*}(2),...,\Theta^{*}(L)\right]\in\mathcal{H}[A,C;B,D]_{\alpha}$, where
  \begin{equation}\label{GLQ-closedR-1}
       \Theta^{*}(i)\equiv(\Theta_{1}^{*}(i)^{\top},\Theta_{2}^{*}(i)^{\top})^{\top}\triangleq-\Sigma(\mathbb{P},i)^{-1}\mathbb{J}^{\top}\big[\mathbb{B}(i)^{\top}\mathbb{P}(i)+ \mathbb{D}(i)^{\top}\mathbb{P}(i)\mathbb{C}(i)+\mathbb{S}(i)\big]\mathbb{I}_{n};
   \end{equation}
  \item[(iii)] The BSDE \eqref{decouple-eta} admits an adapted solution $(\eta,\zeta,\mathbf{z})\in L_{\mathbb{F}}^{2}(\mathbb{R}^{2n})\times L_{\mathbb{F}}^{2}(\mathbb{R}^{2n})\times \mathcal{D}\left(L_{\mathcal{P}}^{2}(\mathbb{R}^{2n})\right)$.
\end{description}
Then for any $(x,i)\in\mathbb{R}^{n}\times\mathcal{S}$, the optimality system \eqref{FBSDEs-GLQ-2}-\eqref{stationary-GLQ-2} admits a solution:
\begin{equation}\label{optimality-system-solution-GLQ}
 \left\{
 \begin{aligned}
 &\mathbb{X}^{*}(\cdot;x,i)=\mathbb{I}_{n}X(\cdot;x,i,\mathbf{\Theta}^{*},\nu^{*}),\\
 &\mathbb{Y}^{*}(\cdot;x,i)=\mathbb{P}(\alpha)\mathbb{X}^{*}(\cdot;x,i)+\eta,\\
 &\mathbb{Z}^{*}(\cdot;x,i)=\mathbb{P}(\alpha)\left[\mathbb{C}(\alpha)\mathbb{X}^{*}(\cdot;x,i)+\mathbb{D}(\alpha)\mathbb{I}_{m}u^{*}+\mathbb{I}_{n}\sigma\right]+\zeta,\\
 &\Gamma_{j}^{*}(t;x,i)=\left[\mathbb{P}(j)-\mathbb{P}(\alpha(t-))\right]\mathbb{X}^{*}(t;x,i)+z_{j}(t),\quad j\in\mathcal{S}\quad t\geq 0,\\
 &u^{*}(\cdot;x,i)=\Theta^{*}(\alpha)X(\cdot;x,i,\mathbf{\Theta}^{*},\nu^{*})+\nu^{*},
 \end{aligned}
 \right.
\end{equation}
where
\begin{equation}\label{GLQ-closedR-2}
  \begin{aligned}
  \nu^{*}&\triangleq -\Sigma(\mathbb{P},\alpha)^{-1}\mathbb{J}^{\top}\big[\mathbb{B}(\alpha)^{\top}\eta + \mathbb{D}(\alpha)^{\top}\zeta+ \mathbb{D}(\alpha)^{\top}\mathbb{P}(\alpha)\mathbb{I}_{n}\sigma+\rho\big]\in L_{\mathbb{F}}^{2}(\mathbb{R}^{m}).
  \end{aligned}
  \end{equation}
In addition, the Problem (M-GLQ) is open-loop solvable and $(\mathbf{\Theta}^{*},\nu^{*})$ is the corresponding closed-loop representation strategy.
\end{theorem}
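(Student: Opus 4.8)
The statement is a verification result, so the plan is to run the decoupling computation \eqref{decouple-1}--\eqref{decouple-eta} backwards and then invoke Theorem \ref{thm-GLQ-open-solvability}. First I would fix $(x,i)$, let $X^{*}\equiv X(\cdot\,;x,i,\mathbf{\Theta}^{*},\nu^{*})$ be the solution of the closed-loop equation \eqref{state-closed} driven by $(\mathbf{\Theta}^{*},\nu^{*})$, and put $u^{*}\equiv\Theta^{*}(\alpha)X^{*}+\nu^{*}$, $\mathbb{X}^{*}\equiv\mathbb{I}_{n}X^{*}$, defining $\mathbb{Y}^{*},\mathbb{Z}^{*},\mathbf{\Gamma}^{*}$ by the explicit formulas in \eqref{optimality-system-solution-GLQ}. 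Since $b,\sigma\in L_{\mathbb{F}}^{2}$, $\eta,\zeta\in L_{\mathbb{F}}^{2}$, $\rho\in L_{\mathbb{F}}^{2}$ and $\Sigma(\mathbb{P},\cdot)^{-1},\mathbb{B}(\cdot),\mathbb{D}(\cdot),\mathbb{P}(\cdot)$ are bounded (the chain has finitely many states), formula \eqref{GLQ-closedR-2} gives $\nu^{*}\in L_{\mathbb{F}}^{2}(\mathbb{R}^{m})$; together with $\mathbf{\Theta}^{*}\in\mathcal{H}[A,C;B,D]_{\alpha}$ and the $L^{2}$-estimate for a stabilized closed-loop system with $L^{2}$ inhomogeneous data (cf. \cite{Wu-etal}), this yields $X^{*}\in L_{\mathbb{F}}^{2}(\mathbb{R}^{n})$, so that $(u_{1}^{*},u_{2}^{*})\in\mathcal{U}_{ad}(x,i)$ and all constructed processes lie in the spaces required in Theorem \ref{thm-GLQ-open-solvability}.

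Next I would verify that $(\mathbb{X}^{*},\mathbb{Y}^{*},\mathbb{Z}^{*},\mathbf{\Gamma}^{*})$ solves the optimality system \eqref{FBSDEs-GLQ-2}--\eqref{stationary-GLQ-2}. The forward equation holds by the definition of $X^{*}$. For the stationary condition, substituting the ansatz $\mathbb{Y}^{*}=\mathbb{P}(\alpha)\mathbb{X}^{*}+\eta$ and the expression for $\mathbb{Z}^{*}$ into the left-hand side of \eqref{stationary-GLQ-2} and using $\mathbb{J}^{\top}[\mathbb{R}(\alpha)+\mathbb{D}(\alpha)^{\top}\mathbb{P}(\alpha)\mathbb{D}(\alpha)]\mathbb{I}_{m}=\Sigma(\mathbb{P},\alpha)$ from \eqref{decouple-Sigma} reduces it exactly to the identity \eqref{decouple-u}, which is precisely how $u^{*}$ was built through \eqref{GLQ-closedR-1}--\eqref{GLQ-closedR-2}. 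For the backward equation I would apply It\^o's formula to $\mathbb{P}(\alpha)\mathbb{X}^{*}$: as $X^{*}$ is continuous and $\alpha$ is a finite-state chain, the product rule yields the term $\mathbb{P}(\alpha)\,d\mathbb{X}^{*}$, the contribution of the chain splitting into the drift $\sum_{j}\pi_{\alpha_{t}j}\mathbb{P}(j)\mathbb{X}^{*}\,dt$ and the martingale $\sum_{j}[\mathbb{P}(j)-\mathbb{P}(\alpha_{t-})]\mathbb{X}^{*}(t-)\,d\widetilde{N}_{j}$, with no cross-variation term; adding $d\eta$ from \eqref{decouple-eta} and reading off the $dW$ and $d\widetilde{N}_{j}$ coefficients reproduces $\mathbb{Z}^{*}$ and $\Gamma_{j}^{*}$ exactly as in \eqref{decouple-2}--\eqref{decouple-3}. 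Finally, collecting the $dt$-coefficient and substituting $\mathbb{Y}^{*},\mathbb{Z}^{*}$ and the expression \eqref{decouple-u} for $u^{*}$, the coefficient of $\mathbb{X}^{*}$ vanishes precisely because $\mathbb{P}$ solves the CARE \eqref{decouple-P}, and the remaining inhomogeneous terms coincide with the drift of \eqref{decouple-eta}. In short, this step is just the chain of identities \eqref{decouple-1}--\eqref{decouple-eta} read in reverse, which is legitimate because each manipulation there is an equality, not an implication requiring extra hypotheses.

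Having solved \eqref{FBSDEs-GLQ-2}--\eqref{stationary-GLQ-2}, I would rewrite it in the original notation via $\mathbb{X}^{*}=\mathbb{I}_{n}X^{*}$, $\mathbb{Y}^{*}=(Y_{1}^{*\top},Y_{2}^{*\top})^{\top}$, and so on, so that $(X^{*},Y_{k}^{*},Z_{k}^{*},\mathbf{\Gamma}_{k}^{*})$ is an adapted solution of the FBSDEs \eqref{FBSDEs-GLQ} satisfying the stationarity condition \eqref{stationary-GLQ}. Since assumption (i) supplies the convexity conditions \eqref{GLQ-convexity}, Theorem \ref{thm-GLQ-open-solvability} then yields that $(u_{1}^{*},u_{2}^{*})$, the two blocks of $u^{*}=\Theta^{*}(\alpha)X^{*}+\nu^{*}$, is an open-loop Nash equilibrium point of Problem (M-GLQ) at $(x,i)$. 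As $(x,i)$ was arbitrary, Problem (M-GLQ) is open-loop solvable, and since $u^{*}(\cdot\,;x,i)=\Theta^{*}(\alpha)X(\cdot\,;x,i,\mathbf{\Theta}^{*},\nu^{*})+\nu^{*}$ with $(\mathbf{\Theta}^{*},\nu^{*})\in\mathcal{H}[A,C;B,D]_{\alpha}\times L_{\mathbb{F}}^{2}(\mathbb{R}^{m})$, Definition \ref{def-open-loop-equilibrium} identifies $(\mathbf{\Theta}^{*},\nu^{*})$ as a closed-loop representation strategy. I expect the only genuinely technical points to be the It\^o product-rule bookkeeping for $\mathbb{P}(\alpha)\mathbb{X}^{*}$ (cleanly separating the compensated jumps of $\alpha$ and confirming the absence of a cross-variation term with the continuous $X^{*}$) and checking the $L^{2}$-integrability of the $\eta$-dependent constructs; neither is a conceptual obstacle, as the algebra has effectively been carried out already in the forward derivation \eqref{decouple-1}--\eqref{decouple-eta}.
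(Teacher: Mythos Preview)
Your proposal is correct and follows essentially the same approach as the paper. The paper presents Theorem \ref{thm-GLQ-open-closed} as a summary (``To sum up, we have the following result'') of the preceding four-step decoupling computation \eqref{decouple-1}--\eqref{decouple-eta}, and your plan to run those identities in reverse, verify that the candidate \eqref{optimality-system-solution-GLQ} solves the optimality system, and then invoke Theorem \ref{thm-GLQ-open-solvability} is exactly the intended argument; indeed the paper carries out this verification explicitly in the zero-sum analog (proof of Theorem \ref{thm-ZLQ-FBSDEs-CAREs}) with the same structure you describe.
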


For $k=1,2$, if  $b=\sigma=q^{k}=0$, $\rho_{1}^{k}=0$, $\rho_{2}^{k}=0$, then the optimality system of Problem (M-GLQ)$^{0}$ becomes
 \begin{equation}\label{FBSDEs-GLQ-0}
  \left\{
      \begin{aligned}
      d\mathbb{X}^{*}(t)&=\left[\mathbb{A}(\alpha_{t})\mathbb{X}^{*}(t)+\mathbb{B}(\alpha_{t})\mathbb{I}_{m}u^{*}(t)\right]dt
      +\left[\mathbb{C}(\alpha_{t})\mathbb{X}^{*}(t)+\mathbb{D}(\alpha_{t})\mathbb{I}_{m}u^{*}(t)\right]dW(t),\\
      d\mathbb{Y}^{*}(t)&=-\left[\mathbb{A}(\alpha_{t})^{\top}\mathbb{Y}^{*}(t)+\mathbb{C}(\alpha_{t})^{\top}\mathbb{Z}^{*}(t)+\mathbb{Q}(\alpha_{t})\mathbb{X}^{*}(t)+\mathbb{S}(\alpha_{t})^{\top}\mathbb{I}_{m}u^{*}(t)\right]dt\\
      &\quad+\mathbb{Z}^{*}(t)dW(t)+\mathbf{\Gamma}^{*}(t)\cdot d\mathbf{\widetilde{N}}(t),\quad t\geq 0,\\
      \mathbb{X}^{*}(0)&=\mathbb{I}_{n}x,\quad\alpha_{0}=i,
      \end{aligned}
      \right.
  \end{equation}
with the stationary condition:
\begin{equation}\label{stationary-GLQ-0}
     \mathbb{J}^{\top}\left\{\mathbb{B}(\alpha_{t})^{\top}\mathbb{Y}^{*}(t)+ \mathbb{D}(\alpha_{t})^{\top}\mathbb{Z}^{*}(t)+\mathbb{S}(\alpha_{t})\mathbb{X}^{*}(t)+\mathbb{R}(\alpha_{t})\mathbb{I}_{m}u^{*}(t)\right\}=0,\quad a.e.\quad a.s..
  \end{equation}
One can easily verify that the above optimality system can be decoupled by $\mathbb{Y}^{*}=\mathbb{P}(\alpha)\mathbb{X}^{*}$. Therefore, we have the following result.

\begin{corollary}\label{coro-GLQ-open-solvability-0}
Suppose that systems $[A,C;B_{1},D_{1}]_{\alpha}$ and $[A,C;B_{2},D_{2}]_{\alpha}$ are $L^{2}$-stablizable. If the following conditions hold:
\begin{description}
\item[(i)] The convexity condition \eqref{GLQ-convexity} holds;
  \item[(ii)] The CAREs \eqref{decouple-P}  admits a solution $\mathbb{P}\in\mathcal{D}\left(\mathbb{R}^{2n\times 2n}\right)$ such that $\Sigma(\mathbb{P},i)$ defined in \eqref{decouple-Sigma} is invertible and corresponding $\mathbf{\Theta}^{*}$ defined in \eqref{GLQ-closedR-1} is in $\mathcal{H}[A,C;B,D]_{\alpha}$.
\end{description}
Then for any $(x,i)\in\mathbb{R}^{n}\times\mathcal{S}$, the optimality system \eqref{FBSDEs-GLQ-0}-\eqref{stationary-GLQ-0} admits a solution:
\begin{equation}\label{optimality-system-solution-GLQ-0}
 \left\{
 \begin{aligned}
 &\mathbb{X}^{*}(\cdot;x,i)=\mathbb{I}_{n}X^{0}(\cdot;x,i,\mathbf{\Theta}^{*},0),\\
 &\mathbb{Y}^{*}(\cdot;x,i)=\mathbb{P}(\alpha)\mathbb{X}^{*}(\cdot;x,i),\\
 &\mathbb{Z}^{*}(\cdot;x,i)=\mathbb{P}(\alpha)\left[\mathbb{C}(\alpha)\mathbb{X}^{*}(\cdot;x,i)+\mathbb{D}(\alpha)\mathbb{I}_{m}u^{*}\right],\\
 &\Gamma_{j}^{*}(t;x,i)=\left[\mathbb{P}(j)-\mathbb{P}(\alpha(t-))\right]\mathbb{X}^{*}(t;x,i),\quad j\in\mathcal{S}\quad t\geq 0,\\
 &u^{*}(\cdot;x,i)=\Theta^{*}(\alpha)X^{0}(\cdot;x,i,\mathbf{\Theta}^{*},0);
 \end{aligned}
 \right.
\end{equation}
 In addition, the Problem (M-GLQ)$^{0}$ is open-loop  solvable and $(\mathbf{\Theta}^{*},0)$  is the corresponding closed-loop representation strategy.
\end{corollary}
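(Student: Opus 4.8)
The plan is to read Corollary \ref{coro-GLQ-open-solvability-0} off Theorem \ref{thm-GLQ-open-closed} as its homogeneous specialization, so that essentially no new computation is needed. When $b=\sigma=q^{k}=0$ and $\rho_{1}^{k}=\rho_{2}^{k}=0$, every source term in the BSDE \eqref{decouple-eta} vanishes and \eqref{decouple-eta} reduces to a linear homogeneous infinite-horizon BSDE whose generator is the transpose of the stabilized closed-loop coefficient. The triple $(\eta,\zeta,\mathbf{z})=(0,0,0)$ is an adapted $L^{2}$ solution, and by the uniqueness of $L^{2}$-adapted solutions of such BSDEs under the $L^{2}$-stability of $[A+B\Theta^{*},C+D\Theta^{*}]_{\alpha}$ — which holds because $\mathbf{\Theta}^{*}\in\mathcal{H}[A,C;B,D]_{\alpha}$, cf.\ \cite{Wu-etal} — it is the only one. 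Thus condition (iii) of Theorem \ref{thm-GLQ-open-closed} is automatically met with the trivial solution. Feeding $(\eta,\zeta,\mathbf{z})=(0,0,0)$ and $b=\sigma=\rho=0$ into \eqref{GLQ-closedR-2} gives $\nu^{*}=0$, and then the solution \eqref{optimality-system-solution-GLQ} furnished by Theorem \ref{thm-GLQ-open-closed} collapses exactly to \eqref{optimality-system-solution-GLQ-0} (note $X(\cdot;x,i,\mathbf{\Theta}^{*},\nu^{*})=X^{0}(\cdot;x,i,\mathbf{\Theta}^{*},0)$ when $b=\sigma=0$ and $\nu^{*}=0$); all remaining assertions, including open-loop solvability of Problem (M-GLQ)$^{0}$ and that $(\mathbf{\Theta}^{*},0)$ is its closed-loop representation strategy, are then inherited directly.

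Alternatively, I would give a self-contained verification, which is what the sentence preceding the corollary hints at. Let $X^{0}=X^{0}(\cdot;x,i,\mathbf{\Theta}^{*},0)$ solve the closed-loop equation \eqref{state-closed} with $\nu=0$, $b=\sigma=0$; since $\mathbf{\Theta}^{*}$ is a stabilizer, $X^{0}\in L_{\mathbb{F}}^{2}(\mathbb{R}^{n})$, hence $u^{*}=\Theta^{*}(\alpha)X^{0}\in L_{\mathbb{F}}^{2}(\mathbb{R}^{m})$ with $u^{*}\in\mathcal{U}_{ad}^{0}(x,i)$, and with $\mathbb{X}^{*}=\mathbb{I}_{n}X^{0}$ the processes $\mathbb{Y}^{*}=\mathbb{P}(\alpha)\mathbb{X}^{*}$, $\mathbb{Z}^{*}=\mathbb{P}(\alpha)[\mathbb{C}(\alpha)\mathbb{X}^{*}+\mathbb{D}(\alpha)\mathbb{I}_{m}u^{*}]$, $\Gamma_{j}^{*}=[\mathbb{P}(j)-\mathbb{P}(\alpha(\cdot-))]\mathbb{X}^{*}$ all lie in the required $L^{2}$ spaces by boundedness of $\mathbb{P}$. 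Applying It\^o's rule to $\mathbb{Y}^{*}=\mathbb{P}(\alpha_{t})\mathbb{X}^{*}(t)$ — which is the computation \eqref{decouple-1}--\eqref{decouple-P} read with $\eta\equiv\zeta\equiv\mathbf{z}\equiv 0$ and zero sources, the generator term $\sum_{j}\pi_{\alpha_{t}j}\mathbb{P}(j)$ and the compensated-jump term $\mathbf{\Gamma}^{*}\cdot d\mathbf{\widetilde{N}}$ arising from differentiating $\mathbb{P}(\alpha_{t})$ — and invoking the CARE \eqref{decouple-P} shows that the backward equation in \eqref{FBSDEs-GLQ-0} holds; the stationary condition \eqref{stationary-GLQ-0} reduces, after inserting the definitions \eqref{decouple-Sigma} of $\Sigma(\mathbb{P},i)$ and \eqref{GLQ-closedR-1} of $\Theta^{*}(i)$, to the identity $\Sigma(\mathbb{P},i)\Theta^{*}(i)+\mathbb{J}^{\top}[\mathbb{B}(i)^{\top}\mathbb{P}(i)+\mathbb{D}(i)^{\top}\mathbb{P}(i)\mathbb{C}(i)+\mathbb{S}(i)]\mathbb{I}_{n}=0$, which is just \eqref{GLQ-closedR-1} rearranged. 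Unpacking the blocked notation, $(X^{*},Y_{k}^{*},Z_{k}^{*},\mathbf{\Gamma}_{k}^{*})$ then satisfies the FBSDEs \eqref{FBSDEs-GLQ} with stationary condition \eqref{stationary-GLQ} of the homogeneous problem, so by assumption (i) together with Theorem \ref{thm-GLQ-open-solvability} (applied to Problem (M-GLQ)$^{0}$), $u^{*}$ is an open-loop Nash equilibrium point for $(x,i)$; since $(x,i)$ is arbitrary, Problem (M-GLQ)$^{0}$ is open-loop solvable with closed-loop representation strategy $(\mathbf{\Theta}^{*},0)$.

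The only genuinely delicate ingredient, common to both routes, is infinite-horizon integrability: one must know that the stabilizing property of $\mathbf{\Theta}^{*}$ pushes the forward state $X^{0}$ — and hence the backward triple $(\mathbb{Y}^{*},\mathbb{Z}^{*},\mathbf{\Gamma}^{*})$ — into the global $L^{2}$ spaces, and (for the first route) that the homogeneous BSDE \eqref{decouple-eta} has no nontrivial $L^{2}$-adapted solution. These are exactly the facts established in the $L^{2}$-stability theory of \cite{Wu-etal}; once they are quoted, everything else is routine algebra, and in fact the entire argument parallels the proof of Theorem \ref{thm-GLQ-open-closed} with $\nu^{*}$, $\eta$, $\zeta$, $\mathbf{z}$ set to zero.
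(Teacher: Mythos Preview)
Your proposal is correct and matches the paper's approach: the paper simply remarks, immediately before the corollary, that in the homogeneous case the optimality system \eqref{FBSDEs-GLQ-0}--\eqref{stationary-GLQ-0} is decoupled by the ansatz $\mathbb{Y}^{*}=\mathbb{P}(\alpha)\mathbb{X}^{*}$ (i.e.\ with $\eta\equiv 0$), which is exactly your second route and the $\nu^{*}=0$ specialization of your first. Your write-up is in fact more detailed than the paper's one-line justification.
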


\begin{remark}\label{rmk-M-GLQ}
Note that $\mathbb{I}_{m}\Sigma(\mathbb{P},i)^{-1}\mathbb{J}^{\top}$ is not symmetric even if $\mathbb{P}$ is symmetric. Hence, the solution to CAREs \eqref{decouple-P} is generally not symmetric.  One can rewrite  \eqref{decouple-P} following from \eqref{decouple-P} and \eqref{GLQ-closedR-1} as:
\begin{equation}\label{decouple-P-2}
   \begin{aligned}
   0&=\Big[\mathbb{A}(i)^{\top}\mathbb{P}(i)+\mathbb{P}(i)\mathbb{A}(i)+\mathbb{C}(i)^{\top}\mathbb{P}(i)\mathbb{C}(i)+\mathbb{Q}(i)+\sum_{j=1}^{L}\pi_{ij}\mathbb{P}(j)\big]\mathbb{I}_{n}\\
&\quad+\big[\mathbb{P}(i)\mathbb{B}(i)+\mathbb{C}(i)^{\top}\mathbb{P}(i)\mathbb{D}(i)+\mathbb{S}(i)^{\top}\big]\mathbb{I}_{m}\Theta^{*}(i),\quad i\in\mathcal{S},
   \end{aligned}
\end{equation}
with constraint
\begin{equation}\label{decouple-P-constraint-2}
  \Sigma(\mathbb{P},i)\Theta^{*}(i)+\mathbb{J}^{\top}\big[\mathbb{B}(i)^{\top}\mathbb{P}(i)+ \mathbb{D}(i)^{\top}\mathbb{P}(i)\mathbb{C}(i)+\mathbb{S}(i)\big]\mathbb{I}_{n}=0.
\end{equation}
Substituting \eqref{GLQ-open-notation-2} into the above equations, we can further simplify \eqref{decouple-P-2}-\eqref{decouple-P-constraint-2} in the component forms:
\begin{equation}\label{decouple-P-3}
 \begin{aligned}
0&=A(i)^{\top}P_{k}(i)+P_{k}(i)A(i)+C(i)^{\top}P_{k}(i)C(i)+Q^{k}(i)+\sum_{j=1}^{L} \pi_{ij}P_{k}(j)\\
&\quad+\big[P_{k}(i)B(i)+C(i)^{\top}P_{k}(i)D(i)+S^{k}(i)^{\top}\big]\Theta^{*}(i),\quad i\in\mathcal{S},
   \end{aligned}
\end{equation}
with constraint
\begin{equation}\label{decouple-P-constraint-3}
\left(R_{k}^{k}(i)+D_{k}(i)^{\top}P_{k}(i)D(i)\right)\Theta^{*}(i)+B_{k}(i)^{\top}P_{k}(i)+ D_{k}(i)^{\top}P_{k}(i)C(i)+S_{k}^{k}(i)=0, \quad k=1,2.
\end{equation}
\end{remark}

 \subsection{Closed-loop solvability for Problem (M-GLQ)}\label{subsection-GLQ-closed}
 We now return to consider the closed-loop solvability for Problem (M-GLQ). 
Let
$$\widehat{X}_{1}=X(\cdot;x,i,u_{1},\mathbf{\widehat{\Theta}}_{2},\widehat{\nu}_{2}),\quad \widehat{X}_{2}=X(\cdot;x,i,\mathbf{\widehat{\Theta}}_{1},\widehat{\nu}_{1},u_{2}),$$
\begin{equation}\label{cost-GLQ-k}
\begin{aligned}
    \widehat{J}_{k}\left(x,i;u_{k}\right)
    & = \mathbb{E}\int_{0}^{\infty}\left[
    \left<
    \left(
    \begin{matrix}
    \widehat{Q}^{k}(\alpha) & \widehat{S}^{k}(\alpha)^{\top} \\
    \widehat{S}^{k}(\alpha) & R_{kk}^{k}(\alpha)
    \end{matrix}
    \right)
    \left(
    \begin{matrix}
    \widehat{X}_{k} \\
    u_{k} 
    \end{matrix}
    \right),
    \left(
    \begin{matrix}
    \widehat{X}_{k} \\
    u_{k}
    \end{matrix}
    \right)
    \right>
    +2\left<
    \left(
    \begin{matrix}
    \widehat{q}^{k} \\
    \widehat{\rho}^{k}
    \end{matrix}
    \right),
    \left(
    \begin{matrix}
    \widehat{X}_{k}\\
    u_{k}
    \end{matrix}
    \right)
    \right>\right]dt,\,k=1,2,
  \end{aligned}
\end{equation}
where
\begin{equation}\label{notation-GLQ}
    \left\{
\begin{aligned}
&\widehat{Q}^{1}(\alpha_{t})= Q^{1}(\alpha_{t})+S_{2}^{1}(\alpha_{t})^{\top}\widehat{\Theta}_{2}(\alpha_{t})
+\widehat{\Theta}_{2}(\alpha_{t})^{\top}S_{2}^{1}(\alpha_{t})
+\widehat{\Theta}_{2}(\alpha_{t})^{\top}R_{22}^{1}(\alpha_{t})\widehat{\Theta}_{2}(\alpha_{t}),\\
&\widehat{S}^{1}(\alpha_{t})=S_{1}^{1}(\alpha_{t})+R_{12}^{1}(\alpha_{t})\widehat{\Theta}_{2}(\alpha_{t}),\quad
\widehat{\rho}^{1}(t)=\rho_{1}^{1}(t)+R_{12}^{1}(\alpha_{t})\widehat{\nu}_{2}(t),\\
&\widehat{q}^{1}(t)=q^{1}(t)+\widehat{\Theta}_{2}(\alpha_{t})^{\top}\rho_{2}^{1}(t)
+\big[S_{2}^{1}(\alpha_{t})^{\top}+\widehat{\Theta}_{2}(\alpha_{t})^{\top}R_{22}^{1}(\alpha_{t})\big]\widehat{\nu}_{2}(t),\\
&\widehat{Q}^{2}(\alpha_{t})= Q^{2}(\alpha_{t})+S_{1}^{2}(\alpha_{t})^{\top}\widehat{\Theta}_{1}(\alpha_{t})
+\widehat{\Theta}_{1}(\alpha_{t})^{\top}S_{1}^{2}(\alpha_{t})
+\widehat{\Theta}_{1}(\alpha_{t})^{\top}R_{11}^{2}(\alpha_{t})\widehat{\Theta}_{1}(\alpha_{t}),\\
&\widehat{S}^{2}(\alpha_{t})=S_{2}^{2}(\alpha_{t})+R_{21}^{2}(\alpha_{t})\widehat{\Theta}_{1}(\alpha_{t}),\quad
\widehat{\rho}^{2}(t)=\rho_{2}^{2}(t)+R_{21}^{2}(\alpha_{t})\widehat{\nu}_{1}(t),\\
&\widehat{q}^{2}(t)=q^{2}(t)+\widehat{\Theta}_{1}(\alpha_{t})^{\top}\rho_{1}^{2}(t)
+\big[S_{1}^{2}(\alpha_{t})^{\top}+\widehat{\Theta}_{1}(\alpha_{t})^{\top}R_{11}^{2}(\alpha_{t})\big]\widehat{\nu}_{1}(t).
\end{aligned}
\right.
\end{equation}
Clearly, by some straightforward calculations, one has:
\begin{equation}\label{cost-GLQ}
    \begin{aligned}
        & J_{1}\left(x,i;u_{1},\mathbf{\widehat{\Theta}}_{2},\widehat{\nu}_{2}\right)
        =\widehat{J}_{1}\left(x,i;u_{1}\right)
        +\mathbb{E}\int_{0}^{\infty}\big<R_{22}^{1}(\alpha)\widehat{\nu}_{2}+2\rho_{2}^{1},\widehat{\nu}_{2}\big>dt,\\
        &J_{2}\left(x,i;\mathbf{\widehat{\Theta}}_{1},\widehat{\nu}_{1},u_{2}\right)
        =\widehat{J}_{2}\left(x,i;u_{2}\right)
        +\mathbb{E}\int_{0}^{\infty}\big<R_{11}^{2}(\alpha)\widehat{\nu}_{1}+2\rho_{1}^{2},\widehat{\nu}_{1}\big>dt,
    \end{aligned}
\end{equation}
 Then, by Definition \ref{def-closed-loop-equilibrium}, a 4-tuple $(\mathbf{\widehat{\Theta}_{1}},\widehat{\nu}_{1};\mathbf{\widehat{\Theta}_{2}},\widehat{\nu}_{2})$ is a closed-loop Nash equilibrium strategy of Problem (M-GLQ) if and only if the following holds:
\begin{description}
  \item[(i)] $\mathbf{\widehat{\Theta}}= (\mathbf{\widehat{\Theta}_{1}}^{\top},\mathbf{\widehat{\Theta}_{2}}^{\top})^{\top}\in \mathcal{H}\left[A,C;B,D\right]_{\alpha}$,
  \item[(ii)]  For $k=1,2$, $(\mathbf{\widehat{\Theta}_{k}},\widehat{\nu}_{k})$ is a closed-loop optimal control for LQ control problem with cost functional  $\widehat{J}_{k}\left(x,i;u_{k}\right)$ and state constraint $\widehat{X}_{k}$. 
\end{description}

The following theorem provides a characterization of the closed-loop Nash equilibrium strategy for Problem (M-GLQ).
\begin{theorem}\label{thm-GLQ-closed}
 A 4-tuple $(\mathbf{\widehat{\Theta}_{1}},\widehat{\nu}_{1};\mathbf{\widehat{\Theta}_{2}},\widehat{\nu}_{2})\in\mathcal{D}\left(\mathbb{R}^{m_{1}\times n}\right)\times L_{\mathbb{F}}^{2}(\mathbb{R}^{m_{1}})\times \mathcal{D}\left(\mathbb{R}^{m_{2}\times n}\right)\times L_{\mathbb{F}}^{2}(\mathbb{R}^{m_{2}})$ is a closed-loop Nash equilibrium strategy of Problem (M-GLQ) if and only if:
\begin{description}
  \item[(i)] $\mathbf{\widehat{\Theta}}= (\mathbf{\widehat{\Theta}_{1}}^{\top},\mathbf{\widehat{\Theta}_{2}}^{\top})^{\top}\in \mathcal{H}\left[A,C;B,D\right]_{\alpha}$,
  \item[(ii)]  The cross-coupled CAREs:
  \begin{equation}\label{CAREs-GLQ-1}
  \begin{aligned}
  0&=\mathcal{M}_{1}(P_{1},i)-\widehat{\Theta}_{1}(i)^{\top}\mathcal{N}_{11}^{1}(P_{1},i)\widehat{\Theta}_{1}(i)+\widehat{\Theta}_{2}(i)^{\top}\mathcal{N}_{22}^{1}(P_{1},i)\widehat{\Theta}_{2}(i)\\
  &\quad+\mathcal{L}_{2}^{1}(P_{1},i)\widehat{\Theta}_{2}(i)+\widehat{\Theta}_{2}(i)^{\top}\mathcal{L}_{2}^{1}(P_{1},i)^{\top},
  \end{aligned}
  \end{equation}
   \begin{equation}\label{CAREs-GLQ-2}
    \begin{aligned}
    0&=\mathcal{M}_{2}(P_{2},i)-\widehat{\Theta}_{2}(i)^{\top}\mathcal{N}_{22}^{2}(P_{2},i)\widehat{\Theta}_{2}(i)+\widehat{\Theta}_{1}(i)^{\top}\mathcal{N}_{11}^{2}(P_{2},i)\widehat{\Theta}_{1}(i)\\
  &\quad+\mathcal{L}_{1}^{2}(P_{2},i)\widehat{\Theta}_{1}(i)+\widehat{\Theta}_{1}(i)^{\top}\mathcal{L}_{1}^{2}(P_{2},i)^{\top},
  \end{aligned}
  \end{equation}
  admits a solution $(\mathbf{P_{1}},\mathbf{P_{2}})\in \mathcal{D}(\mathbb{S}^{n})\times  \mathcal{D}(\mathbb{S}^{n})$ such that, for any $i\in\mathcal{S}$,
  \begin{equation}\label{CAREs-GLQ-constraint}
  \left\{
      \begin{aligned}
      &\mathcal{N}_{11}^{1}(P_{1},i)\widehat{\Theta}_{1}(i)+\mathcal{N}_{12}^{1}(P_{1},i)\widehat{\Theta}_{2}(i)+\mathcal{L}_{1}^{1}(P_{1},i)^{\top}=0,\\
      &\mathcal{N}_{21}^{2}(P_{2},i)\widehat{\Theta}_{1}(i)+\mathcal{N}_{22}^{2}(P_{2},i)\widehat{\Theta}_{2}(i)+\mathcal{L}_{2}^{2}(P_{2},i)^{\top}=0,\\
      &\mathcal{N}_{11}^{1}(P_{1},i)\geq 0,\quad \mathcal{N}_{22}^{2}(P_{2},i)\geq 0,
      \end{aligned}
      \right.
  \end{equation}
  where, for any $k,l,m\in\{1,2\}$,
  $$
  \left\{
\begin{array}{l}
\mathcal{N}_{lm}^{k}(P_{k},i)=D_{l}(i)^{\top}P_{k}(i)D_{m}(i)+R_{lm}^{k}(i),\\
\mathcal{L}_{l}^{k}(P_{k},i)=P_{k}(i)B_{l}(i)+C(i)^{\top}P_{k}(i)D_{l}(i)+S_{l}^{k}(i)^{\top},\\
\mathcal{M}_{k}(P_{k},i)
= P_{k}(i)A(i)+A(i)^{\top}P_{k}(i)+C(i)^{\top}P_{k}(i)C(i)+Q^{k}(i)+\sum_{j=1}^{L}\pi_{ij}P_{k}(j),
\end{array}
\right.
$$
  \item[(iii)] The cross-coupled BSDEs:
  \begin{equation}\label{GLQ-eta-1}
   \begin{aligned}
   d\eta_{1}&=-\Big\{A(\alpha)^{\top}\eta_{1}+C(\alpha)^{\top}\big[\zeta_{1}+P_{1}(\alpha)\sigma\big]+P_{1}(\alpha)b+q^{1}+\widehat{\Theta}_{2}(\alpha)^{\top}\widehat{\rho}_{2}^{1}+\mathcal{L}_{2}^{1}(P_{1},\alpha)\widehat{\nu}_{2}\\
   &\quad-\widehat{\Theta}_{1}(\alpha)^{\top}\mathcal{N}_{11}^{1}(P_{1},\alpha)\widehat{\nu}_{1}+\widehat{\Theta}_{2}(\alpha)^{\top}\mathcal{N}_{22}^{1}(P_{1},\alpha)\widehat{\nu}_{2}\Big\}dt+\zeta_{1}dW(t)+\mathbf{z}_{1}\cdot d\mathbf{\widetilde{N}}(t),
   \end{aligned}
  \end{equation}
  and
\begin{equation}\label{GLQ-eta-2}
   \begin{aligned}
   d\eta_{2}&=-\Big\{A(\alpha)^{\top}\eta_{2}+C(\alpha)^{\top}\big[\zeta_{2}+P_{2}(\alpha)\sigma\big]+P_{2}(\alpha)b+q^{2}+\widehat{\Theta}_{1}(\alpha)^{\top}\widehat{\rho}_{1}^{2}+\mathcal{L}_{1}^{2}(P_{2},\alpha)\widehat{\nu}_{1}\\
   &\quad+\widehat{\Theta}_{1}(\alpha)^{\top}\mathcal{N}_{11}^{2}(P_{2},\alpha)\widehat{\nu}_{1}-\widehat{\Theta}_{2}(\alpha)^{\top}\mathcal{N}_{22}^{2}(P_{2},\alpha)\widehat{\nu}_{2}\Big\}dt+\zeta_{2}dW(t)+\mathbf{z}_{2}\cdot d\mathbf{\widetilde{N}}(t),
   \end{aligned}
  \end{equation}
  admits a solution $(\eta_{1},\zeta_{1},\mathbf{z}_{1};\eta_{2},\zeta_{2},\mathbf{z}_{2})\in L_{\mathbb{F}}^{2}(\mathbb{R}^{n})\times L_{\mathbb{F}}^{2}(\mathbb{R}^{n})\times\mathcal{D}\left(L_{\mathcal{P}}^{2}(\mathbb{R}^{n})\right)\times L_{\mathbb{F}}^{2}(\mathbb{R}^{n})\times L_{\mathbb{F}}^{2}(\mathbb{R}^{n})\times\mathcal{D}\left(L_{\mathcal{P}}^{2}(\mathbb{R}^{n})\right)$ such that
  \begin{equation}\label{CAREs-GLQ-eta-constraint}
  \left\{
      \begin{aligned}
      &\mathcal{N}_{11}^{1}(P_{1},\alpha)\widehat{\nu}_{1}+\mathcal{N}_{12}^{1}(P_{1},\alpha)\widehat{\nu}_{2}+\widehat{\rho}_{1}^{1}=0,\\
      &\mathcal{N}_{21}^{2}(P_{2},\alpha)\widehat{\nu}_{1}+\mathcal{N}_{22}^{2}(P_{2},\alpha)\widehat{\nu}_{2}+\widehat{\rho}_{2}^{2}=0,
      \end{aligned}
      \right.
  \end{equation}
  where
 $$
\widehat{\rho}_{l}^{k}\triangleq B_{l}(\alpha)^{\top}\eta_{k}+D_{l}(\alpha)^{\top}\zeta_{k}
+D_{l}(\alpha)^{\top}P_{k}(\alpha)\sigma+\rho_{l}^{k},\quad k,l\in\{1,2\}.
$$
\end{description}
  In this case, the closed-loop equilibrium value function of  two players are given by
  \begin{equation}\label{GLQ-value-1}
  \begin{aligned}
  V_{1}(x,i)&=\big<P_{1}(i)x,x\big>+\mathbb{E}\Big\{2\big<\eta_{1}(0),x\big>+\int_{0}^{\infty}\Big[2\big<\eta_{1},b\big>+2\big<\zeta_{1},\sigma\big>+\big<P_{1}(\alpha)\sigma,\sigma\big>\\
  &\quad +2\big<\widehat{\rho}_{2}^{1}-\mathcal{N}_{21}^{1}(P_{1},\alpha)\mathcal{N}_{11}^{1}(P_{1},\alpha)^{\dag}\widehat{\rho}_{1}^{1},\widehat{\nu}_{2}\big>-\big<\mathcal{N}_{11}^{1}(P_{1},\alpha)^{\dag}\widehat{\rho}_{1}^{1},\widehat{\rho}_{1}^{1}\big>\\
  &\quad +\big<\big[\mathcal{N}_{22}^{1}(P_{1},\alpha)-\mathcal{N}_{21}^{1}(P_{1},\alpha)\mathcal{N}_{11}^{1}(P_{1},\alpha)^{\dag}\mathcal{N}_{12}^{1}(P_{1},\alpha)\big]\widehat{\nu}_{2},\widehat{\nu}_{2}\big>
  \Big]dt\Big\},
  \end{aligned}
  \end{equation}
  and
   \begin{equation}\label{GLQ-value-2}
  \begin{aligned}
  V_{2}(x,i)&=\big<P_{2}(i)x,x\big>+\mathbb{E}\Big\{2\big<\eta_{2}(0),x\big>+\int_{0}^{\infty}\Big[2\big<\eta_{2},b\big>+2\big<\zeta_{2},\sigma\big>+\big<P_{2}(\alpha)\sigma,\sigma\big>\\
  &\quad +2\big<\widehat{\rho}_{1}^{2}-\mathcal{N}_{12}^{2}(P_{2},\alpha)\mathcal{N}_{22}^{2}(P_{2},\alpha)^{\dag}\widehat{\rho}_{2}^{2},\widehat{\nu}_{1}\big>-\big<\mathcal{N}_{22}^{2}(P_{2},\alpha)^{\dag}\widehat{\rho}_{2}^{2},\widehat{\rho}_{2}^{2}\big>\\
  &\quad +\big<\big[\mathcal{N}_{11}^{2}(P_{2},\alpha)-\mathcal{N}_{12}^{2}(P_{2},\alpha)\mathcal{N}_{22}^{2}(P_{2},\alpha)^{\dag}\mathcal{N}_{21}^{2}(P_{2},\alpha)\big]\widehat{\nu}_{1},\widehat{\nu}_{1}\big>
  \Big]dt\Big\}.
  \end{aligned}
  \end{equation}
\end{theorem}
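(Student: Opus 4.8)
The plan is to build on the reduction recorded immediately before the statement: by \eqref{cost-GLQ} and Definition \ref{def-closed-loop-equilibrium}, the 4-tuple $(\mathbf{\widehat{\Theta}}_{1},\widehat\nu_{1};\mathbf{\widehat{\Theta}}_{2},\widehat\nu_{2})$ is a closed-loop Nash equilibrium strategy of Problem (M-GLQ) if and only if $\mathbf{\widehat{\Theta}}\in\mathcal{H}[A,C;B,D]_{\alpha}$ and, for each $k\in\{1,2\}$ (with $l$ the other index), the pair $(\mathbf{\widehat{\Theta}}_{k},\widehat\nu_{k})$ is a closed-loop optimal control of the non-homogeneous LQ control problem with state $\widehat X_{k}$ solving \eqref{state-semi-closed} and cost functional $\widehat J_{k}(x,i;u_{k})$ from \eqref{cost-GLQ-k}. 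This $k$-th LQ problem has system matrices $(A+B_{l}\widehat\Theta_{l},\,C+D_{l}\widehat\Theta_{l};\,B_{k},\,D_{k})$, non-homogeneous coefficients $B_{l}\widehat\nu_{l}+b$ and $D_{l}\widehat\nu_{l}+\sigma$, and weighting data $\widehat Q^{k},\widehat S^{k},R_{kk}^{k},\widehat q^{k},\widehat\rho^{k}$ given by \eqref{notation-GLQ}. Since $\mathbf{\widehat{\Theta}}=(\mathbf{\widehat{\Theta}}_{1}^{\top},\mathbf{\widehat{\Theta}}_{2}^{\top})^{\top}$ stabilizes $[A,C;B,D]_{\alpha}$, the matrix $\mathbf{\widehat{\Theta}}_{k}$ itself is a stabilizer of the effective system $[A+B_{l}\widehat\Theta_{l},\,C+D_{l}\widehat\Theta_{l};\,B_{k},\,D_{k}]_{\alpha}$, so the closed-loop solvability theory of \cite{Wu-etal} applies to both LQ problems without change; composing the two ``if and only if'' statements then yields the theorem once the Riccati/BSDE objects have been correctly identified.

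Accordingly, I would next invoke the closed-loop characterization of \cite{Wu-etal} for player $k$: $(\mathbf{\widehat{\Theta}}_{k},\widehat\nu_{k})$ is a closed-loop optimal control of the $k$-th LQ problem iff $\mathbf{\widehat{\Theta}}_{k}$ is a stabilizer of the effective system, a symmetric $\mathbf{P}_{k}\in\mathcal{D}(\mathbb{S}^{n})$ solves the associated constrained CARE with $\mathcal{N}_{kk}^{k}(P_{k},i)=R_{kk}^{k}(i)+D_{k}(i)^{\top}P_{k}(i)D_{k}(i)\geq0$, the feedback gain obeys the linear stationarity relation that appears as the $k$-th line of \eqref{CAREs-GLQ-constraint} (which simultaneously encodes the range/regularity condition and the gain up to $\ker\mathcal{N}_{kk}^{k}(P_{k},i)$), and the inhomogeneity is carried by a BSDE for $(\eta_{k},\zeta_{k},\mathbf{z}_{k})$ whose adapted solution satisfies the corresponding $\nu$-stationarity relation, $\widehat\nu_{k}$ being recovered from it. It then remains to rewrite these two decoupled systems in the cross-coupled forms stated in the theorem.

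The bulk of the argument, and the step I expect to be the main obstacle, is this algebraic translation. For the Riccati part I would expand $P_{k}(A+B_{l}\widehat\Theta_{l})+(A+B_{l}\widehat\Theta_{l})^{\top}P_{k}+(C+D_{l}\widehat\Theta_{l})^{\top}P_{k}(C+D_{l}\widehat\Theta_{l})+\widehat Q^{k}+\sum_{j}\pi_{ij}P_{k}(j)$; substituting $\widehat Q^{k}$ from \eqref{notation-GLQ}, the $\widehat\Theta_{l}$-linear and $\widehat\Theta_{l}$-quadratic cross terms collect exactly into $\mathcal{M}_{k}(P_{k},i)+\mathcal{L}_{l}^{k}(P_{k},i)\widehat\Theta_{l}(i)+\widehat\Theta_{l}(i)^{\top}\mathcal{L}_{l}^{k}(P_{k},i)^{\top}+\widehat\Theta_{l}(i)^{\top}\mathcal{N}_{ll}^{k}(P_{k},i)\widehat\Theta_{l}(i)$, while the quadratic term $-(\text{gain block})^{\top}\mathcal{N}_{kk}^{k}(P_{k},i)^{\dag}(\text{gain block})$ of the standard CARE is rewritten, using the stationarity relation $\mathcal{N}_{kk}^{k}(P_{k},i)\widehat\Theta_{k}(i)+\mathcal{N}_{kl}^{k}(P_{k},i)\widehat\Theta_{l}(i)+\mathcal{L}_{k}^{k}(P_{k},i)^{\top}=0$, as $-\widehat\Theta_{k}(i)^{\top}\mathcal{N}_{kk}^{k}(P_{k},i)\widehat\Theta_{k}(i)$ plus $\mathcal{N}_{kl}^{k},\mathcal{N}_{lk}^{k}$ cross terms that cancel against those produced above, leaving precisely \eqref{CAREs-GLQ-1}--\eqref{CAREs-GLQ-2}. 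The structurally identical (though lengthier) bookkeeping applied to the BSDE of \cite{Wu-etal}, with inhomogeneous term $B_{k}^{\top}\eta_{k}+D_{k}^{\top}\zeta_{k}+D_{k}^{\top}P_{k}(D_{l}\widehat\nu_{l}+\sigma)+\widehat\rho^{k}$ and the $\nu$-stationarity relation, produces \eqref{GLQ-eta-1}--\eqref{GLQ-eta-2} together with \eqref{CAREs-GLQ-eta-constraint}, the abbreviation $\widehat\rho_{l}^{k}=B_{l}(\alpha)^{\top}\eta_{k}+D_{l}(\alpha)^{\top}\zeta_{k}+D_{l}(\alpha)^{\top}P_{k}(\alpha)\sigma+\rho_{l}^{k}$ absorbing the $\sigma$-contributions. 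Care is needed because $\mathbf{\widehat{\Theta}}_{k}$ and $\widehat\nu_{k}$ are determined only modulo $\ker\mathcal{N}_{kk}^{k}(P_{k},i)$, so each identity must be derived from the defining linear relations \eqref{CAREs-GLQ-constraint}, \eqref{CAREs-GLQ-eta-constraint} (which guarantee the needed range conditions) rather than from explicit pseudo-inverse formulas; keeping signs consistent across these expansions is the delicate point.

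Finally, for the value functions I would start from \eqref{cost-GLQ}, which gives $V_{k}(x,i)=\widehat J_{k}(x,i;\widehat u_{k})+\mathbb{E}\int_{0}^{\infty}\langle R_{ll}^{k}(\alpha)\widehat\nu_{l}+2\rho_{l}^{k},\widehat\nu_{l}\rangle\,dt$, where $\widehat J_{k}(x,i;\widehat u_{k})$ is the optimal value of the $k$-th LQ problem, hence by the completion-of-squares formula of \cite{Wu-etal} equals $\langle P_{k}(i)x,x\rangle+\mathbb{E}\{2\langle\eta_{k}(0),x\rangle+\int_{0}^{\infty}[\,2\langle\eta_{k},B_{l}\widehat\nu_{l}+b\rangle+2\langle\zeta_{k},D_{l}\widehat\nu_{l}+\sigma\rangle+\langle P_{k}(\alpha)(D_{l}\widehat\nu_{l}+\sigma),D_{l}\widehat\nu_{l}+\sigma\rangle-\langle\mathcal{N}_{kk}^{k}(P_{k},\alpha)^{\dag}\widehat\rho_{k}^{k},\widehat\rho_{k}^{k}\rangle\,]dt\}$. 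Expanding the hatted inhomogeneities in terms of the original data and $\widehat\nu_{l}$ (noting that the extra $\langle R_{ll}^{k}\widehat\nu_{l}+2\rho_{l}^{k},\widehat\nu_{l}\rangle$ piece combines with the $D_{l}^{\top}P_{k}D_{l}\widehat\nu_{l}$ term to form $\mathcal{N}_{ll}^{k}(P_{k})$), then using \eqref{CAREs-GLQ-eta-constraint} and the identity $\mathcal{N}_{kl}^{k}(P_{k})^{\top}=\mathcal{N}_{lk}^{k}(P_{k})$ to express the $\mathcal{N}_{kl}^{k}\widehat\nu_{l}$ contribution through $\widehat\rho_{k}^{k}$ and completing the Schur complement $\mathcal{N}_{ll}^{k}-\mathcal{N}_{lk}^{k}(\mathcal{N}_{kk}^{k})^{\dag}\mathcal{N}_{kl}^{k}$, one arrives at \eqref{GLQ-value-1}--\eqref{GLQ-value-2}. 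This last reduction is routine once the constrained CARE/BSDE identification has been carried out.
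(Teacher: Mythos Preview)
Your proposal is correct and follows essentially the same approach as the paper: reduce to the two player-wise LQ problems via \eqref{cost-GLQ}, apply the closed-loop characterization from \cite{Wu-etal} (the paper cites its Theorem~5.2) to each, record the algebraic identities linking the ``hatted'' effective data to $\mathcal{M}_{k},\mathcal{L}_{l}^{k},\mathcal{N}_{lm}^{k}$, and then substitute to obtain \eqref{CAREs-GLQ-1}--\eqref{CAREs-GLQ-constraint}, \eqref{GLQ-eta-1}--\eqref{CAREs-GLQ-eta-constraint}, and the value formulas. The paper's proof organizes the same computation slightly differently (it first states the identities \eqref{GLQ-relation} and then substitutes), but the content is identical to what you outline.
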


\begin{proof}
For any given $\mathbf{P_{k}}\in\mathcal{D}\left(\mathbb{S}^{n}\right),\, k=1,2$, let
$$
\left\{
\begin{array}{l}
\widehat{A}_{1}(i)\triangleq A(i)+B_{2}(i)\widehat{\Theta}_{2}(i),\quad
\widehat{A}_{2}(i)\triangleq A(i)+B_{1}(i)\widehat{\Theta}_{1}(i),\\
\widehat{C}_{1}(i)\triangleq C(i)+D_{2}(i)\widehat{\Theta}_{2}(i),\quad
\widehat{C}_{2}(i)\triangleq C(i)+D_{1}(i)\widehat{\Theta}_{1}(i),\\
\widehat{b}_{1}\triangleq B_{2}(\alpha)\widehat{\nu}_{2}+b,\quad \widehat{b}_{2}\triangleq B_{1}(\alpha)\widehat{\nu}_{1}+b,\quad
\widehat{\sigma}_{1}\triangleq D_{2}(\alpha)\widehat{\nu}_{2}+\sigma,\quad
\widehat{\sigma}_{2}\triangleq D_{1}(\alpha)\widehat{\nu}_{1}+\sigma,\\
\widehat{\mathcal{M}}_{k}(P_{k},i)\triangleq P_{k}(i)\widehat{A}_{k}(i)+\widehat{A}_{k}(i)^{\top}P_{k}(i)+\widehat{C}_{k}(i)^{\top}P_{k}(i)\widehat{C}_{k}(i)+\widehat{Q}^{k}(i)+\sum_{j=1}^{L}\pi_{ij}P_{k}(j),\\
\widehat{\mathcal{L}}_{k}(P_{k},i)\triangleq P_{k}(i)B_{k}(i)+\widehat{C}_{k}(i)^{\top}P_{k}(i)D_{k}(i)+\widehat{S}^{k}(i)^{\top},\\
\widehat{\mathcal{N}}_{k}(P_{k},i)\triangleq D_{k}(i)^{\top}P_{k}(i)D_{k}(i)+R_{kk}^{k}(i),\quad i\in\mathcal{S}.
\end{array}
\right.
$$
Then by \cite[Theorem $5.2$]{Wu-etal} and some basic properties of pseudoinverse (see Penrose \cite{Penrose.1955}), for $k=1,2$, we have that $(\mathbf{\widehat{\Theta}_{k}},\widehat{\nu}_{k})$ is an optimal closed-loop control for problem with cost functional $\widehat{J}_{k}\left(x,i;u_{k}\right)$  and state constraint $X_{k}$ if and only if the following holds:
\begin{description}
  \item[(i)] The constrained CAREs:
  \begin{equation}\label{CAREs-GLQ-K}
\left\{
   \begin{aligned}
    &\widehat{\mathcal{M}}_{k}(P_{k},i)-\widehat{\mathcal{L}}_{k}(P_{k},i) \widehat{\mathcal{N}}_{k}(P_{k},i)^{\dag} \widehat{\mathcal{L}}_{k}(P_{k},i)^{\top} = 0,\\
    & \widehat{\mathcal{N}}_{k}(P_{k},i)\geq 0,\quad\forall i\in\mathcal{S},
   \end{aligned}
   \right.
\end{equation}
admits a solution $\mathbf{P}_{k}$ such that
\begin{equation}\label{CAREs-GLQ-K-constraint}
    \widehat{\mathcal{N}}_{k}(P_{k},i)\widehat{\Theta}_{k}(i)+\widehat{\mathcal{L}}_{k}(P_{k},i)^{\top} = 0,
\end{equation}
\item[(ii)] The BSDE
\begin{equation}\label{GLQ-eta-k}
      \begin{aligned}
        d\eta_{k}&=-\big\{\big[\widehat{A}_{k}(\alpha)^{\top}-\widehat{\mathcal{L}}_{k}(P_{k},\alpha)\widehat{\mathcal{N}}_{k}(P_{k},\alpha)^{\dag}B_{k}(\alpha)^{\top}\big]\eta_{k}-\widehat{\mathcal{L}}_{k}(P_{k},\alpha)\widehat{\mathcal{N}}_{k}(P_{k},\alpha)^{\dag}\widehat{\rho}^{k}\\
        &\quad+\big[\widehat{C}_{k}(\alpha)^{\top}-\widehat{\mathcal{L}}_{k}(P_{k},\alpha)\widehat{\mathcal{N}}_{k}(P_{k},\alpha)^{\dag}D_{k}(\alpha)^{\top}\big]\left(\zeta_{k}+P_{k}(\alpha)\widehat{\sigma}_{k}\right)+P_{k}(\alpha)\widehat{b}_{k}+\widehat{q}^{k}\big\}dt\\
        &\quad+\zeta_{k} dW(t)+\mathbf{z}_{k}\cdot d\mathbf{\widetilde{N}}(t),\quad t\geq 0,
      \end{aligned}
  \end{equation}
  admits a $L^{2}$-stable adapted solution  $\left(\eta_{k},\zeta_{k},\mathbf{z}_{k}\right)$ such that
   \begin{equation}\label{GLQ-eta-k-constraint}
    \widehat{\mathcal{N}}_{k}(P_{k},\alpha_{t})\widehat{\nu}_{k}(t)+\widetilde{\rho}^{k}(t)=0,\text{ } a.e.\text{ } a.s..
   \end{equation}
   where
   $$\widetilde{\rho}^{k}=B_{k}(\alpha)^{\top}\eta_{k}+D_{k}(\alpha)^{\top}\zeta_{k}+D_{k}(\alpha)^{\top}P_{k}(\alpha)\widehat{\sigma}_{k}+\widehat{\rho}^{k}.$$
\end{description}
In this case, the value function $V_{k}(x,i)$ admits the following representation:
\begin{equation}\label{GLQ-value-function-K}
    \begin{aligned}
      V_{k}(x,i)&=\big<P_{k}(i)x,x\big>+\mathbb{E}\big\{2\big<\eta_{k}(0),x\big>+\int_{0}^{\infty}
      \big[\big<P_{k}(\alpha)\widehat{\sigma}_{k},\widehat{\sigma}_{k}\big>+2\big<\eta_{k},\widehat{b}_{k}\big>+2\big<\zeta_{k},\widehat{\sigma}_{k}\big>\\
      &\quad-\big<\widehat{\mathcal{N}}_{k}(P_{k},\alpha)^{\dag}\widetilde{\rho}^{k},\widetilde{\rho}^{k}\big>
      +\big<R_{ll}^{k}(\alpha)\widehat{\nu}_{l}+2\rho_{l}^{k},\widehat{\nu}_{l}\big>\big]dt\big\},\quad k,l\in\{(1,2),(2,1)\}.
    \end{aligned}
\end{equation}
On the other hand, one can easily verify that
\begin{equation}\label{GLQ-relation}
 \left\{
\begin{array}{l}
\widehat{\mathcal{M}}_{k}(P_{k},i)=\mathcal{M}_{k}(P_{k},i)+\mathcal{L}_{l}^{k}(P_{k},i)\widehat{\Theta}_{l}(i)
+\widehat{\Theta}_{l}(i)^{\top}\mathcal{L}_{l}^{k}(P_{k},i)^{\top}
+\widehat{\Theta}_{l}(i)^{\top}\mathcal{N}_{ll}^{k}(P_{k},i)\widehat{\Theta}_{l}(i),\\
\widehat{\mathcal{L}}_{k}(P_{k},i)=\mathcal{L}_{k}^{k}(P_{k},i)+\widehat{\Theta}_{l}(i)^{\top}\mathcal{N}_{lk}^{k}(P_{k},i),\\
\widehat{\mathcal{N}}_{k}(P_{k},i)=\mathcal{N}_{kk}^{k}(P_{k},i),\quad i\in\mathcal{S}, \quad (k,l)\in\{(1,2),(2,1)\}.
\end{array}
\right.
\end{equation}
Substituting the above equations into \eqref{CAREs-GLQ-K-constraint} and \eqref{GLQ-eta-k-constraint} yields \eqref{CAREs-GLQ-constraint} and \eqref{CAREs-GLQ-eta-constraint}. In addition, plugging \eqref{GLQ-relation} into \eqref{CAREs-GLQ-K}, we have
\begin{align*}
     0&=\mathcal{M}_{k}(P_{k},i)-\big[\widehat{\Theta}_{l}^{\top}\mathcal{N}_{lk}^{k}(P_{k},i)+\mathcal{L}_{k}^{k}(P_{k},i)\big]\mathcal{N}_{kk}^{k}(P_{k},i)^{\dag}\big[\mathcal{N}_{kl}^{1}(P_{k},i)\widehat{\Theta}_{l}
   +\mathcal{L}_{k}^{k}(P_{k},i)^{\top}\big]\\
&\quad+\mathcal{L}_{l}^{k}(P_{k},i)\widehat{\Theta}_{l}+\widehat{\Theta}_{l}^{\top}\mathcal{L}_{l}^{k}(P_{k},i)^{\top}+\widehat{\Theta}_{l}^{\top}\mathcal{N}_{ll}^{k}(P_{k},i)\widehat{\Theta}_{l},\quad i\in\mathcal{S}, \quad (k,l)\in\{(1,2),(2,1)\}.
\end{align*}
It follows from \eqref{CAREs-GLQ-constraint} that the above equation can be simplify as \eqref{CAREs-GLQ-1}-\eqref{CAREs-GLQ-2}. On the other hand, taking $k=1$, it follows from \eqref{GLQ-eta-k} that
\begin{equation}\label{GLQ-eta-k-1}
    \begin{aligned}
        d\eta_{1}
        &=-\big\{\big[A(\alpha)+B_{2}(\alpha)\widehat{\Theta}_{2}(\alpha)\big]^{\top}\eta_{1}+\big[C(\alpha)+D_{2}(\alpha)\widehat{\Theta}_{2}(\alpha)\big]^{\top}\big[\zeta_{1}+P_{1}(\alpha)\sigma+P_{1}(\alpha)D_{2}(\alpha)\widehat{\nu}_{2}\big]\\
        &\quad+P_{1}(\alpha)b+q^{1}+P_{1}(\alpha)B_{2}(\alpha)\widehat{\nu}_{2}+\widehat{\Theta}_{2}(\alpha)^{\top}\rho_{2}^{1}+\big[S_{2}^{1}(\alpha)^{\top}+\widehat{\Theta}_{2}(\alpha)^{\top}R_{22}^{1}(\alpha)\big]\widehat{\nu}_{2}\\
        &\quad-\big[\mathcal{L}_{1}^{1}(P_{1},\alpha)+\widehat{\Theta}_{2}(\alpha)^{\top}\mathcal{N}_{21}^{1}(P_{1},\alpha)\big]\widehat{\mathcal{N}}_{11}^{1}(P_{1},\alpha)^{\dag}\big[B_{1}(\alpha)^{\top}\eta_{1}+D_{1}(\alpha)^{\top}\big(\zeta_{1}+P_{1}(\alpha)\sigma
        \big)+\rho_{1}^{1}\\
        &\quad+D_{1}(\alpha)^{\top}P_{1}(\alpha)D_{2}(\alpha)\widehat{\nu}_{2}+R_{12}^{1}(\alpha)\widehat{\nu}_{2}\big]\big\}dt+\zeta_{1} dW(t)+\mathbf{z}_{1}\cdot d\mathbf{\widetilde{N}}(t),\\
        &=-\big\{A(\alpha)^{\top}\eta_{1}+C(\alpha)^{\top}\big[\zeta_{1}+P_{1}(\alpha)\sigma\big]+P_{1}(\alpha)b+q^{1}+\widehat{\Theta}_{2}(\alpha)^{\top}\widehat{\rho}_{2}^{1}+\mathcal{L}_{2}^{1}(P_{1},\alpha)\widehat{\nu}_{2}\\
   &\quad+\widehat{\Theta}_{2}(\alpha)^{\top}\mathcal{N}_{22}^{1}(P_{1},\alpha)\widehat{\nu}_{2}-\big[\mathcal{L}_{1}^{1}(P_{1},\alpha)+\widehat{\Theta}_{2}(\alpha)^{\top}\mathcal{N}_{21}^{1}(P_{1},\alpha)\big]\widehat{\mathcal{N}}_{11}^{1}(P_{1},\alpha)^{\dag}\big[\widehat{\rho}_{1}^{1}+\mathcal{N}_{12}^{1}(P,\alpha)\widehat{\nu}_{2}\big]\big\}\\
   &\quad+\zeta_{1}dW(t)+\mathbf{z}_{1}\cdot d\mathbf{\widetilde{N}}(t),\quad t\geq 0.
\end{aligned}
\end{equation}
It follows from \eqref{CAREs-GLQ-constraint} and \eqref{CAREs-GLQ-eta-constraint} that
\begin{align*}
   & \mathcal{L}_{1}^{1}(P_{1},\alpha)+\widehat{\Theta}_{2}(\alpha)^{\top}\mathcal{N}_{21}^{1}(P_{1},\alpha)=-\widehat{\Theta}_{1}(\alpha)^{\top}\mathcal{N}_{11}^{1}(P_{1},\alpha),\\
   &\widehat{\rho}_{1}^{1}+\mathcal{N}_{12}^{1}(P,\alpha)\widehat{\nu}_{2}=-\mathcal{N}_{11}^{1}(P,\alpha)\widehat{\nu}_{1}.
\end{align*}
Plugging the above relations into \eqref{GLQ-eta-k-1} yields \eqref{GLQ-eta-1} and one can derive \eqref{GLQ-eta-2} similarly. Finally, we can also obtain \eqref{GLQ-value-1}-\eqref{GLQ-value-2} by substituting \eqref{GLQ-relation} into \eqref{GLQ-value-function-K}. This completes the proof.
\end{proof}

\begin{remark}
In fact,  let
$$
\left\{
\begin{array}{l}
\mathcal{L}^{k}(P_{k},i)\triangleq P_{k}(i)B(i)+C(i)^{\top}P_{k}(i)D(i)+S^{k}(i)^{\top},\\
\mathcal{N}^{k}(P_{k},i)\triangleq D(i)^{\top}P_{k}(i)D(i)+R^{k}(i),\\
\mathcal{N}_{k}^{k}(P_{k},i)\triangleq D_{k}(i)^{\top}P_{k}(i)D(i)+R_{k}^{k}(i),\quad i\in\mathcal{S},\quad k=1,2.
\end{array}
\right.
$$
Then one can further simplify the CAREs\eqref{CAREs-GLQ-1}-\eqref{CAREs-GLQ-2} by using \eqref{CAREs-GLQ-constraint} as follows:
\begin{equation}\label{CAREs-GLQ-1-2}
\begin{aligned}
    0&=\mathcal{M}_{1}(P_{1},i)+\widehat{\Theta}^{\top}\mathcal{N}^{1}(P_{1},i)\widehat{\Theta}+\mathcal{L}^{1}(P_{1},i)\widehat{\Theta}+\widehat{\Theta}^{\top}\mathcal{L}^{1}(P_{1},i)^{\top},
\end{aligned}
\end{equation}
and
\begin{equation}\label{CAREs-GLQ-2-2}
\begin{aligned}
  0&=\mathcal{M}_{2}(P_{2},i)+\widehat{\Theta}^{\top}\mathcal{N}^{2}(P_{2},i)\widehat{\Theta}+\mathcal{L}^{2}(P_{2},i)\widehat{\Theta}+\widehat{\Theta}^{\top}\mathcal{L}^{2}(P_{2},i)^{\top}.
\end{aligned}
\end{equation}
In addition, the constraint condition \eqref{CAREs-GLQ-constraint} also admits the following representation:
  \begin{equation}\label{CAREs-GLQ-constraint-2}
  \left\{
      \begin{aligned}
      &\mathcal{N}_{1}^{1}(P_{1},i)\widehat{\Theta}+\mathcal{L}_{1}^{1}(P_{1},i)^{\top}=0,\\
      &\mathcal{N}_{2}^{2}(P_{2},i)\widehat{\Theta}+\mathcal{L}_{2}^{2}(P_{2},i)^{\top}=0,\\
      &\mathcal{N}_{11}^{1}(P_{1},i)\geq 0,\quad \mathcal{N}_{22}^{2}(P_{2},i)\geq 0.
      \end{aligned}
      \right.
  \end{equation}
  Therefore,  the constrained cross-coupled CAREs\eqref{CAREs-GLQ-1}-\eqref{CAREs-GLQ-constraint} can be rewritten as CAREs\eqref{CAREs-GLQ-1-2}-\eqref{CAREs-GLQ-constraint-2}.
 This is consistent with the corresponding result in Li et al. \cite{Li-Shi-Yong-2021-ID-MFLQ-IF}.
\end{remark}

For $k=1, 2$,  if $b=\sigma= q^{k}=0$, $\rho_{1}^{k}=0$, $\rho_{2}^{k}=0$, then the following result immediately follows from the sufficiency of Theorem \ref{thm-GLQ-closed}.
\begin{corollary}\label{coro-GLQ-0-closed}
If $(\mathbf{\widehat{\Theta}_{1}},\widehat{\nu}_{1};\mathbf{\widehat{\Theta}_{2}},\widehat{\nu}_{2})$ is a closed-loop Nash equilibrium strategy of Problem (M-GLQ), then $(\mathbf{\widehat{\Theta}_{1}},0;\mathbf{\widehat{\Theta}_{2}},0)$ is a closed-loop Nash equilibrium strategy of Problem (M-GLQ)$^{0}$. In this case, the closed-loop equilibrium value function of  two players are given by
  \begin{equation}\label{GLQ-0-value}
  V_{1}^{0}(x,i)=\big<P_{1}(i)x,x\big>,\quad
  V_{2}^{0}(x,i)=\big<P_{2}(i)x,x\big>.
  \end{equation}
\end{corollary}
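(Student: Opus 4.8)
The plan is to deduce the statement directly from the sufficiency part of Theorem \ref{thm-GLQ-closed}, applied to Problem (M-GLQ)$^{0}$ with the candidate $(\mathbf{\widehat{\Theta}_{1}},0;\mathbf{\widehat{\Theta}_{2}},0)$ and the Riccati solution already furnished by the given closed-loop Nash equilibrium strategy of Problem (M-GLQ). By hypothesis $(\mathbf{\widehat{\Theta}_{1}},\widehat{\nu}_{1};\mathbf{\widehat{\Theta}_{2}},\widehat{\nu}_{2})$ is a closed-loop Nash equilibrium strategy of Problem (M-GLQ), so Theorem \ref{thm-GLQ-closed} supplies $\mathbf{\widehat{\Theta}}=(\mathbf{\widehat{\Theta}_{1}}^{\top},\mathbf{\widehat{\Theta}_{2}}^{\top})^{\top}\in\mathcal{H}[A,C;B,D]_{\alpha}$ together with a solution $(\mathbf{P_{1}},\mathbf{P_{2}})\in\mathcal{D}(\mathbb{S}^{n})\times\mathcal{D}(\mathbb{S}^{n})$ of the cross-coupled CAREs \eqref{CAREs-GLQ-1}--\eqref{CAREs-GLQ-2} satisfying the constraint \eqref{CAREs-GLQ-constraint}. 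Since the stabilizability requirement and the algebraic system \eqref{CAREs-GLQ-1}--\eqref{CAREs-GLQ-constraint} do not involve any of the non-homogeneous data $b,\sigma,q^{k},\rho_{1}^{k},\rho_{2}^{k}$, conditions (i) and (ii) of Theorem \ref{thm-GLQ-closed} for Problem (M-GLQ)$^{0}$ hold verbatim with the same $\mathbf{\widehat{\Theta}}$ and the same $(\mathbf{P_{1}},\mathbf{P_{2}})$.

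It remains to verify condition (iii) of Theorem \ref{thm-GLQ-closed} for Problem (M-GLQ)$^{0}$ with the choice $\widehat{\nu}_{1}=\widehat{\nu}_{2}=0$. Setting $b=\sigma=q^{k}=\rho_{1}^{k}=\rho_{2}^{k}=0$ and $\widehat{\nu}_{1}=\widehat{\nu}_{2}=0$, the auxiliary processes reduce to $\widehat{\rho}_{l}^{k}=B_{l}(\alpha)^{\top}\eta_{k}+D_{l}(\alpha)^{\top}\zeta_{k}$, and the cross-coupled BSDEs \eqref{GLQ-eta-1}--\eqref{GLQ-eta-2} collapse to the homogeneous linear BSDEs $d\eta_{k}=-\big[(A(\alpha)+B_{l}(\alpha)\widehat{\Theta}_{l}(\alpha))^{\top}\eta_{k}+(C(\alpha)+D_{l}(\alpha)\widehat{\Theta}_{l}(\alpha))^{\top}\zeta_{k}\big]dt+\zeta_{k}dW(t)+\mathbf{z}_{k}\cdot d\mathbf{\widetilde{N}}(t)$ for $(k,l)\in\{(1,2),(2,1)\}$. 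Hence $(\eta_{k},\zeta_{k},\mathbf{z}_{k})=(0,0,0)$ is an $L^{2}$-stable adapted solution lying in the required space, and for this solution the constraint \eqref{CAREs-GLQ-eta-constraint} reduces to $\mathcal{N}_{11}^{1}(P_{1},\alpha)\cdot 0+\mathcal{N}_{12}^{1}(P_{1},\alpha)\cdot 0+0=0$ (and its counterpart for the second line), which holds trivially. Therefore all three conditions of Theorem \ref{thm-GLQ-closed} are met for Problem (M-GLQ)$^{0}$, and its sufficiency part yields that $(\mathbf{\widehat{\Theta}_{1}},0;\mathbf{\widehat{\Theta}_{2}},0)$ is a closed-loop Nash equilibrium strategy of Problem (M-GLQ)$^{0}$.

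Finally, the value functions \eqref{GLQ-0-value} follow by specializing the representation \eqref{GLQ-value-1}--\eqref{GLQ-value-2}: with $\eta_{k}=\zeta_{k}=0$, $b=\sigma=0$, $\widehat{\rho}_{l}^{k}=0$ and $\widehat{\nu}_{k}=0$, every term inside the expectation vanishes and only $\langle P_{k}(i)x,x\rangle$ survives, so $V_{k}^{0}(x,i)=\langle P_{k}(i)x,x\rangle$ for $k=1,2$. I do not anticipate a genuine obstacle; the only step demanding a little care is the bookkeeping in the second paragraph, namely confirming that under vanishing non-homogeneous data and $\widehat{\nu}_{k}=0$ every inhomogeneous driver in \eqref{GLQ-eta-1}--\eqref{GLQ-eta-2} indeed disappears, which makes the trivial BSDE solution admissible and closes the argument.
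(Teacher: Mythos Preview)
Your proposal is correct and follows essentially the same route as the paper: the corollary is obtained by applying the sufficiency part of Theorem~\ref{thm-GLQ-closed} to Problem (M-GLQ)$^{0}$, using the same $\mathbf{\widehat{\Theta}}$ and $(\mathbf{P_{1}},\mathbf{P_{2}})$, and noting that with all non-homogeneous data and $\widehat{\nu}_{k}$ set to zero the BSDE condition is satisfied by the trivial solution, after which the value-function formulas reduce to $\langle P_{k}(i)x,x\rangle$. The paper states this in one line; you have simply supplied the bookkeeping details.
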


\section{Zero-sum Nash differential game}\label{section-ZLQ}
In this section, we study the zero-sum LQ-SDG problem.  By definition, we know that the Problem (M-ZLQ) can be considered a special case of Problem (M-GLQ). Hence, we can use the results derived in the previous section to study the open-loop and closed-loop solvabilities of Problem (M-ZLQ).
\subsection{Open-loop solvability for Problem (M-ZLQ)}
It follows from \eqref{FBSDEs-GLQ}
 and the relation \eqref{ZLQ-cost-notation} that, for Problem (M-ZLQ), the following holds: $$\left(Y_{1}^{*},Z_{1}^{*},\mathbf{\Gamma}_{1}^{*}\right)=-\left(Y_{2}^{*},Z_{2}^{*},\mathbf{\Gamma}_{2}^{*}\right)\triangleq \left(Y^{*},Z^{*},\mathbf{\Gamma}^{*}\right).$$
Consequently, we immediately derive the open-loop solvability of Problem (M-ZLQ) from Theorem \ref{thm-GLQ-open-solvability} as follows.

\begin{theorem}\label{thm-ZLQ-open-solvability}
 Suppose that systems $[A,C;B_{1},D_{1}]_{\alpha}$ and $[A,C;B_{2},D_{2}]_{\alpha}$ are $L^{2}$-stabilizable. Then $(u_{1}^{*},u_{2}^{*})\in\mathcal{U}_{ad}(x,i)$ is an open-loop saddle point of Problem (M-ZLQ) for initial value $(x,i)\in\mathbb{R}^{n}\times\mathcal{S}$ if and only if
 \begin{description}
    \item[(i)] The following convexity-concavity condition holds:
   \begin{equation}\label{ZLQ-convexity-concavity}
       \left\{
       \begin{array}{l}
         J^{0}(0,i;u_{1},0)\geq 0,\quad \forall u_{1}\in\mathcal{U}_{ad}^{1,0}(x,i;0),\quad \forall i\in\mathcal{S},  \\
         J^{0}(0,i;0,u_{2})\leq 0,\quad \forall u_{2}\in\mathcal{U}_{ad}^{2,0}(x,i;0),  \quad \forall i\in\mathcal{S},
       \end{array}
       \right.
   \end{equation}
   \item[(ii)] The adapted solution $\left(X^{*},Y^{*},Z^{*},\mathbf{\Gamma}^{*}\right)\in L_{\mathbb{F}}^{2}(\mathbb{R}^{n})\times L_{\mathbb{F}}^{2}(\mathbb{R}^{n})\times L_{\mathbb{F}}^{2}(\mathbb{R}^{n})\times\mathcal{D}\left(L_{\mathcal{P}}^{2}(\mathbb{R}^{n})\right)$ to the  following FBSDEs
  \begin{equation}\label{FBSDEs-ZLQ}
  \left\{
      \begin{aligned}
      dX^{*}(t)&=\left[A(\alpha_{t})X^{*}(t)+B(\alpha_{t})u^{*}(t)+b(t)\right]dt+\left[C(\alpha_{t})X^{*}(t)+D(\alpha_{t})u^{*}(t)+\sigma(t)\right]dW(t),\\
      dY^{*}(t)&=-\left[A(\alpha_{t})^{\top}Y^{*}(t)+C(\alpha_{t})^{\top}Z^{*}(t)+Q(\alpha_{t})X^{*}(t)+S(\alpha_{t})^{\top}u^{*}(t)+q(t)\right]dt\\
      &\quad+Z^{*}(t)dW(t)+\mathbf{\Gamma}^{*}(t)\cdot d\mathbf{\widetilde{N}}(t),\quad t\geq 0,\\
      X^{*}(0)&=x,\quad\alpha_{0}=i,
      \end{aligned}
      \right.
  \end{equation}
  satisfies the following stationary condition:
  \begin{equation}\label{stationary-ZLQ}
     B(\alpha_{t})^{\top}Y^{*}(t)+ D(\alpha_{t})^{\top}Z^{*}(t)+S(\alpha_{t})X^{*}(t)+R(\alpha_{t})u^{*}(t)+\rho(t)=0,\quad a.e.\quad a.s..
  \end{equation}
 \end{description}
\end{theorem}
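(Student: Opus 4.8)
The plan is to obtain this statement as a direct specialization of Theorem~\ref{thm-GLQ-open-solvability}, using that Problem~(M-ZLQ) is exactly the instance of Problem~(M-GLQ) whose cost coefficients satisfy the zero-sum relation~\eqref{ZLQ-cost-notation}. First I would record that under~\eqref{ZLQ-cost-notation} one has $J_{1}(x,i;u_{1},u_{2})=J(x,i;u_{1},u_{2})=-J_{2}(x,i;u_{1},u_{2})$, together with the analogous identity for the homogeneous functionals, while the state dynamics~\eqref{state-open-simplicity}, and hence all the sets $\mathcal{U}_{ad}(x,i)$, $\mathcal{U}_{ad}^{k}(x,i;u_{l})$ and $\mathcal{U}_{ad}^{k,0}(x,i;0)$, are unchanged. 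It then follows straight from the definitions that a pair $(u_{1}^{*},u_{2}^{*})\in\mathcal{U}_{ad}(x,i)$ satisfies the saddle inequalities~\eqref{ZLQ-open-loop-Nash-equilibrium-point} if and only if it satisfies the Nash inequalities~\eqref{open-loop-Nash-equilibrium-point} for this Problem~(M-GLQ): the inequality $J(x,i;u_{1}^{*},u_{2}^{*})\le J(x,i;u_{1},u_{2}^{*})$ is the first line of~\eqref{open-loop-Nash-equilibrium-point}, while $J(x,i;u_{1}^{*},u_{2})\le J(x,i;u_{1}^{*},u_{2}^{*})$ becomes $J_{2}(x,i;u_{1}^{*},u_{2}^{*})\le J_{2}(x,i;u_{1}^{*},u_{2})$ after multiplying by $-1$, which is the second line. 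Thus Theorem~\ref{thm-GLQ-open-solvability} applies and it remains only to translate its two conclusions.

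For condition~(i) of Theorem~\ref{thm-GLQ-open-solvability}: $J_{1}^{0}(0,i;u_{1},0)\ge 0$ reads $J^{0}(0,i;u_{1},0)\ge 0$, and $J_{2}^{0}(0,i;0,u_{2})\ge 0$ reads $-J^{0}(0,i;0,u_{2})\ge 0$, i.e.\ $J^{0}(0,i;0,u_{2})\le 0$; together these are precisely the convexity-concavity condition~\eqref{ZLQ-convexity-concavity}. For condition~(ii): as already observed just before the statement, writing the two backward equations in~\eqref{FBSDEs-GLQ} and using $Q^{1}=-Q^{2}$, $S^{1}=-S^{2}$, $q^{1}=-q^{2}$ shows that $(-Y_{2}^{*},-Z_{2}^{*},-\mathbf{\Gamma}_{2}^{*})$ solves the same linear BSDE (same coefficients, driven by $X^{*}$ and $u^{*}$) as $(Y_{1}^{*},Z_{1}^{*},\mathbf{\Gamma}_{1}^{*})$; by uniqueness of the $L^{2}$-adapted solution in the stabilizable setting of~\cite{Wu-etal} this forces $(Y_{1}^{*},Z_{1}^{*},\mathbf{\Gamma}_{1}^{*})=-(Y_{2}^{*},Z_{2}^{*},\mathbf{\Gamma}_{2}^{*})\triangleq(Y^{*},Z^{*},\mathbf{\Gamma}^{*})$, and the $Y_{1}^{*}$-equation then coincides with the backward equation of~\eqref{FBSDEs-ZLQ}.

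Finally I would merge the two stationarity conditions in~\eqref{stationary-GLQ}. Using $S_{1}^{1}=S_{1}$, $R_{1}^{1}=R_{1}$, $\rho_{1}^{1}=\rho_{1}$, the $k=1$ line is $B_{1}(\alpha_{t})^{\top}Y^{*}+D_{1}(\alpha_{t})^{\top}Z^{*}+S_{1}(\alpha_{t})X^{*}+R_{1}(\alpha_{t})u^{*}+\rho_{1}=0$; substituting $Y_{2}^{*}=-Y^{*}$, $Z_{2}^{*}=-Z^{*}$ and $S_{2}^{2}=-S_{2}$, $R_{2}^{2}=-R_{2}$, $\rho_{2}^{2}=-\rho_{2}$, the $k=2$ line becomes $B_{2}(\alpha_{t})^{\top}Y^{*}+D_{2}(\alpha_{t})^{\top}Z^{*}+S_{2}(\alpha_{t})X^{*}+R_{2}(\alpha_{t})u^{*}+\rho_{2}=0$. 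Stacking these and recalling the block notation $B=(B_{1},B_{2})$, $D=(D_{1},D_{2})$ and $S$, $R$, $\rho$ from~\eqref{GLQ-open-notation-state+cost}, one obtains exactly~\eqref{stationary-ZLQ}, which finishes the reduction. I do not expect a genuine obstacle here: the argument is essentially bookkeeping, and the only points needing care are tracking the $\pm$ signs coming from~\eqref{ZLQ-cost-notation} and checking that the admissible-control sets for the two problems genuinely coincide, so that the $L^{2}$-stabilizability hypotheses of Theorem~\ref{thm-GLQ-open-solvability} carry over verbatim.
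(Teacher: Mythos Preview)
Your proposal is correct and follows essentially the same approach as the paper: specialize Theorem~\ref{thm-GLQ-open-solvability} to the zero-sum case via the relation~\eqref{ZLQ-cost-notation}, observe that $(Y_{1}^{*},Z_{1}^{*},\mathbf{\Gamma}_{1}^{*})=-(Y_{2}^{*},Z_{2}^{*},\mathbf{\Gamma}_{2}^{*})$ so the two backward equations collapse to one, and read off the convexity-concavity condition and the single stationarity condition. The paper's own proof is in fact just a one-sentence remark before the theorem invoking exactly this reduction, so your writeup is if anything more detailed than what appears there.
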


In the above, the FBSDEs \eqref{FBSDEs-ZLQ} together with the stationary condition \eqref{stationary-ZLQ} constitute the optimality system of Problem (M-ZLQ).

Note that for any $i\in\mathcal{S}$,
\[
\left(\begin{matrix}
Q(i) & S_{1}(i)^{\top}\\
S_{1}(i) & R_{11}(i)\end{matrix}\right)\geq 0,\quad \text{and} \quad
\left(\begin{matrix}
Q(i) & S_{2}(i)^{\top}\\
S_{2}(i) & R_{22}(i)\end{matrix}\right)\leq 0
\]
generally cannot hold together. Hence, a natural question arises: under what conditions does the performance functional \eqref{zero-sum-performance} satisfy condition \eqref{ZLQ-convexity-concavity}? In the following, we  say $\lambda$ ($\mu$) is the minimum (maximum) eigenvalue of process $\Lambda(\alpha)$ if it satisfies
$$\lambda=\min\{\lambda_{1},\lambda_{2},\cdots,\lambda_{L}\}\quad
\left(\mu=\max\{\mu_{1},\mu_{2},\cdots,\mu_{L}\}\right),$$
where $\lambda_{i}$ ($\mu_{i}$) is the  minimum (maximum) eigenvalue of $\Lambda(i)$,
$i\in\mathcal{S}$. For simplicity, we denote the minimum (maximum) eigenvalue of a process $\Lambda(\alpha)$ as $meig(\Lambda(\alpha))$  ($Meig(\Lambda(\alpha))$). Let
$$
\left\{
\begin{array}{l}
m_{Q}=meig(Q(\alpha)),\quad M_{Q}=Meig(Q(\alpha)), \quad m_{R}=meig(R_{11}(\alpha)),\quad M_{R}=Meig(R_{22}(\alpha)),\\
M(\epsilon)=Meig\left(A(\alpha)+A(\alpha)^{\top}+C(\alpha)^{\top}C(\alpha)+\epsilon I\right),\\
\mu_{k}(\epsilon)=Meig\big(\frac{1}{\epsilon}[B_{k}(\alpha)+C(\alpha)^{\top}D_{k}(\alpha)]^{\top}[B_{k}(\alpha)+C(\alpha)^{\top}D_{k}(\alpha)]+D_{k}(\alpha)^{\top}D_{k}(\alpha)\big).
\end{array}
\right.
$$
The following result provides a sufficient condition to verify that the condition \eqref{ZLQ-convexity-concavity} holds.
\begin{proposition}\label{prop-convex-concave}
Suppose $m_{Q}<0,\,M_{Q}>0$ and the following conditions hold:
\begin{enumerate}
  \item [(i)] $S_{k}(i),\, R_{12}(i)=R_{21}(i)=0$ for all $i\in\mathcal{S}$;
  \item [(ii)] There exists two constants $\epsilon_{1}>0$  and  $\epsilon_{2}>0$  such that
  $$M(\epsilon_{1})<0,\quad m_{R}-\frac{\mu_{1}(\epsilon_{1})m_{Q}}{M(\epsilon_{1})}\geq 0, \quad \text{and} \quad
  M(\epsilon_{2})<0,\quad M_{R}-\frac{\mu_{2}(\epsilon_{2})M_{Q}}{M(\epsilon_{2})}\leq 0.$$
\end{enumerate}
Then, the performance functional \eqref{zero-sum-performance} satisfies condition \eqref{ZLQ-convexity-concavity}.
\end{proposition}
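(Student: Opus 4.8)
The plan is to verify the two inequalities in \eqref{ZLQ-convexity-concavity} directly and separately, the first (nonnegativity of $J^{0}(0,i;u_{1},0)$) using the parameter $\epsilon_{1}$ and the second (nonpositivity of $J^{0}(0,i;0,u_{2})$) using $\epsilon_{2}$; the two arguments are mirror images, so I describe the first in detail.

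Fix $i\in\mathcal{S}$ and $u_{1}\in\mathcal{U}_{ad}^{1,0}(x,i;0)$, and set $X^{0}=X^{0}(\cdot;0,i,u_{1},0)$, which together with $u_{1}$ lies in $L_{\mathbb{F}}^{2}$ by admissibility. Since $X^{0}$ has continuous paths (the state equation has no jump term, only its coefficients switch with $\alpha$), the first step is to apply It\^o's formula to $|X^{0}(t)|^{2}$ and take expectations; as $X^{0}\in L_{\mathbb{F}}^{2}(\mathbb{R}^{n})$ forces $\liminf_{t\to\infty}\mathbb{E}|X^{0}(t)|^{2}=0$, passing to the limit along a sequence $T_{n}\to\infty$ eliminates the boundary term, and after using the hypothesis $S_{k}=R_{12}=R_{21}=0$ and regrouping the quadratic form one arrives at the identity
\[
0=\mathbb{E}\int_{0}^{\infty}\Big[\langle (A(\alpha)+A(\alpha)^{\top}+C(\alpha)^{\top}C(\alpha))X^{0},X^{0}\rangle+2\langle X^{0},(B_{1}(\alpha)+C(\alpha)^{\top}D_{1}(\alpha))u_{1}\rangle+\langle D_{1}(\alpha)^{\top}D_{1}(\alpha)u_{1},u_{1}\rangle\Big]dt.
\]
The single technical point is that the stochastic-integral term drops in expectation as $T_{n}\to\infty$, which I would justify by a routine stopping-time localization together with $X^{0},u_{1}\in L_{\mathbb{F}}^{2}$.

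Next, with $\epsilon_{1}$ from hypothesis (ii), I apply Young's inequality $2\langle X^{0},(B_{1}+C^{\top}D_{1})(\alpha)u_{1}\rangle\le\epsilon_{1}|X^{0}|^{2}+\frac{1}{\epsilon_{1}}|(B_{1}+C^{\top}D_{1})(\alpha)u_{1}|^{2}$ inside the integrand; the resulting quadratic form is bounded above, by the definitions of $M(\epsilon_{1})$ and $\mu_{1}(\epsilon_{1})$, by $M(\epsilon_{1})|X^{0}|^{2}+\mu_{1}(\epsilon_{1})|u_{1}|^{2}$. Combining this with the displayed identity and using $M(\epsilon_{1})<0$ gives the a priori estimate $\mathbb{E}\int_{0}^{\infty}|X^{0}|^{2}dt\le-\frac{\mu_{1}(\epsilon_{1})}{M(\epsilon_{1})}\mathbb{E}\int_{0}^{\infty}|u_{1}|^{2}dt$. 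Finally, since $m_{Q}<0$, starting from $J^{0}(0,i;u_{1},0)\ge m_{Q}\,\mathbb{E}\int_{0}^{\infty}|X^{0}|^{2}dt+m_{R}\,\mathbb{E}\int_{0}^{\infty}|u_{1}|^{2}dt$ and inserting this estimate (multiplication by $m_{Q}<0$ reverses the inequality) yields $J^{0}(0,i;u_{1},0)\ge\big(m_{R}-\frac{\mu_{1}(\epsilon_{1})m_{Q}}{M(\epsilon_{1})}\big)\mathbb{E}\int_{0}^{\infty}|u_{1}|^{2}dt\ge0$ by hypothesis (ii).

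The concavity inequality $J^{0}(0,i;0,u_{2})\le0$ follows in the same way with the state driven by $B_{2},D_{2}$: the It\^o identity and Young's inequality with $\epsilon_{2}$ give $\mathbb{E}\int_{0}^{\infty}|X^{0}|^{2}dt\le-\frac{\mu_{2}(\epsilon_{2})}{M(\epsilon_{2})}\mathbb{E}\int_{0}^{\infty}|u_{2}|^{2}dt$ (using $M(\epsilon_{2})<0$), and then the upper bounds $\langle Q(\alpha)X^{0},X^{0}\rangle\le M_{Q}|X^{0}|^{2}$ (here $M_{Q}>0$ is what makes the sign come out right) and $\langle R_{22}(\alpha)u_{2},u_{2}\rangle\le M_{R}|u_{2}|^{2}$ give $J^{0}(0,i;0,u_{2})\le\big(M_{R}-\frac{\mu_{2}(\epsilon_{2})M_{Q}}{M(\epsilon_{2})}\big)\mathbb{E}\int_{0}^{\infty}|u_{2}|^{2}dt\le0$ by the second part of (ii). I do not expect a genuine obstacle: the argument is an a priori energy estimate produced by It\^o's formula together with eigenvalue and Young-inequality bookkeeping, and the only delicate ingredient---vanishing of the martingale term in the limit---is entirely standard.
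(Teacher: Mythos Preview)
Your argument is correct and follows the same overall strategy as the paper: apply It\^o's formula to $|X^{0}|^{2}$, use Young's inequality with the parameter $\epsilon_{k}$, extract the bound $\mathbb{E}\int_{0}^{\infty}|X^{0}|^{2}\,dt\le-\frac{\mu_{k}(\epsilon_{k})}{M(\epsilon_{k})}\mathbb{E}\int_{0}^{\infty}|u_{k}|^{2}\,dt$, and then plug into the eigenvalue estimates on $Q$ and $R_{kk}$. The one genuine difference is in how you obtain the integrated $L^{2}$ bound. The paper derives a pointwise differential inequality $\phi_{k}'(t)\le M(\epsilon_{k})\phi_{k}(t)+\mu_{k}(\epsilon_{k})\psi_{k}(t)$, applies Gronwall, and then integrates via Fubini; this has the (minor) advantage that it does not need $X^{0}\in L_{\mathbb{F}}^{2}$ as an input, it actually proves it. You instead exploit the admissibility hypothesis $X^{0}\in L_{\mathbb{F}}^{2}$ to pass to the limit $T_{n}\to\infty$ directly in the integrated It\^o identity, which is more elementary (no Gronwall, no Fubini) and perfectly legitimate here since $u_{k}\in\mathcal{U}_{ad}^{k,0}(x,i;0)$ by assumption. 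Either route yields the same estimate with the same constants.
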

\begin{proof}
  Let $X_{k}=X^{0}(\cdot;0,i,u_{k},0)$.
  By applying It\^o's rule, we have
  \[
  \begin{aligned}
  \mathbb{E}[|X_{k}(t)|^{2}]=\mathbb{E}\int_{0}^{t}\Big[&\left<\big(A(\alpha)+A(\alpha)^{\top}+C(\alpha)^{\top}C(\alpha)\big)X_{k},X_{k}\right>+\left<D_{k}(\alpha)^{\top}D_{k}(\alpha)u_{k},u_{k}\right>\\
  &+2\left<\big(B_{k}(\alpha)+C(\alpha)^{\top}D_{k}(\alpha)\big)u_{k},X_{k}\right>\Big]ds.
  \end{aligned}
  \]
  Set $\phi_{k}(t)=\mathbb{E}[|X_{k}(t)|^{2}]$ and $\psi_{k}(t)=\mathbb{E}[|u_{k}(t)|^{2}]$. Then the above equation implies
  \[
  d\phi_{k}(t)\leq \left[M(\epsilon_{k})\phi_{k}(t)+\mu_{k}(\epsilon_{k})\psi_{k}(t)\right]dt,\quad \forall \epsilon_{k}>0.
  \]
  Note that $\phi_{k}(0)=\mathbb{E}[|X_{k}(0)|^{2}]=0$ . By Gronwall's inequality, one can further obtain
  \[
  \phi_{k}(t)\leq \int_{0}^{t}\mu_{k}(\epsilon_{k})\psi_{k}(s)e^{M(\epsilon_{k})(t-s)}ds.
  \]
  Taking $\epsilon_{k}$ satisfied the condition $(ii)$ and integrating both side yields
  \[\begin{aligned}
 \mathbb{E}\int_{0}^{\infty}|X_{k}(t)|^{2}dt= \int_{0}^{\infty}\phi_{k}(t)dt&\leq  \int_{0}^{\infty}\big(\int_{0}^{t}\mu_{k}(\epsilon_{k})\psi_{k}(s)e^{M(\epsilon_{k})(t-s)}ds\big) dt\\
  &= \int_{0}^{\infty}\big(\int_{s}^{\infty}\mu_{k}(\epsilon_{k})\psi_{k}(s)e^{M(\epsilon_{k})(t-s)}dt\big) ds\\
  &=-\frac{\mu_{k}(\epsilon_{k})}{M(\epsilon_{k})}\int_{0}^{\infty}\psi_{k}(s)ds=-\frac{\mu_{k}(\epsilon_{k})}{M(\epsilon_{k})} \mathbb{E}\int_{0}^{\infty}|u_{k}(t)|^{2}dt.
  \end{aligned}\]
  Hence, substituting the above results into \eqref{ZLQ-convexity-concavity}, we have
\[\begin{aligned}
  &J^{0}(0,i;u_{1},0)\!=\!\mathbb{E}\int_{0}^{\infty}\!\left[\left<Q(\alpha)X_{1},X_{1}\right>\!+\!\left<R_{11}(\alpha)u_{1},u_{1}\right>\right]dt\!\geq\! \left(m_{R}\!-\!\frac{\mu_{1}(\epsilon_{1})m_{Q}}{M(\epsilon_{1})}\right)\!
  \mathbb{E}\!\int_{0}^{\infty}|u_{1}|^{2}dt\!\geq\! 0,\\
  &J^{0}(0,i;0,u_{2})\!=\!\mathbb{E}\int_{0}^{\infty}\!\left[\left<Q(\alpha)X_{2},X_{2}\right>\!+\!\left<R_{22}(\alpha)u_{2},u_{2}\right>\right]dt\!\leq \!\left(M_{R}\!-\!\frac{\mu_{2}(\epsilon_{1})M_{Q}}{M(\epsilon_{2})}\right)\!
  \mathbb{E}\!\int_{0}^{\infty}|u_{2}|^{2}dt\!\leq \! 0.
  \end{aligned}\]
  This completes the proof.
\end{proof}

\begin{remark}
    In fact, one can similarly generalize our result to the case
    $m_{Q}<M_{Q}<0$ and $M_{Q}>m_{Q}>0$. Compared with the example constructed in \cite{Sun.J.R.2021_ZSILQ} that corresponding performance functional satisfies the condition \eqref{ZLQ-convexity-concavity} in the one-dimensional case, the sufficient condition pointed above is undoubtedly more general.
\end{remark}

To simplify the further analysis, we set 
\begin{equation}\label{notations-3}
    \begin{array}{l}
    \mathcal{M}(P,i)\triangleq P(i)A(i)+A(i)^{\top}P(i)+C(i)^{\top}P(i)C(i)+Q(i)+\sum_{j=1}^{L}\pi_{ij}P(j),\\
    \mathcal{L}(P,i)\triangleq
    P(i)B(i)+C(i)^{\top}P(i)D(i)+S(i)^{\top},\\
    \mathcal{N}(P,i)\triangleq
    D(i)^{\top}P(i)D(i)+R(i),\quad i\in\mathcal{S},
    \end{array}
\end{equation}
and  consider the following constrained CAREs:
\begin{equation}\label{ZLQ-CAREs}
\left\{
    \begin{aligned}
    &\mathcal{M}(P,i)-\mathcal{L}(P,i) \mathcal{N}(P,i)^{\dag} \mathcal{L}(P,i)^{\top} = 0,\\
    & \mathcal{L}(P,i)\left[I-\mathcal{N}(P,i)\mathcal{N}(P,i)^{\dag}\right]=0.
    \end{aligned}
    \right.
\end{equation}
\begin{definition}\label{def-stabilizing-solution-CAREs}
$\mathbf{P}\in\mathcal{D}\left(\mathbb{S}^{n}\right)$ is called a static stabilizing solution of \eqref{ZLQ-CAREs} if $\mathbf{P}$ solves CAREs \eqref{ZLQ-CAREs},
  and there exists $\mathbf{\Pi}\in\mathcal{D}\left(\mathbb{R}^{m\times n}\right)$ such that
\begin{equation}\label{CAREs-ZLQ-stabilizer}
\mathcal{K}(\mathbf{\Pi})=\left(\mathcal{K}\left(\Pi(1)\right),\mathcal{K}\left(\Pi(2)\right),\cdots,\mathcal{K}\left(\Pi(L)\right)\right)\in\mathcal{H}[A,C;B,D]_{\alpha},
\end{equation}
where $
  \mathcal{K}\left(\Pi(i)\right)\triangleq-\mathcal{N}(P,i)^{\dag}\mathcal{L}(P,i)^{\top}+\left[I-\mathcal{N}(P,i)^{\dag}\mathcal{N}(P,i)\right]\Pi(i),\, i\in\mathcal{S}.
$
\end{definition}
The following result establishes a relation between the optimality system  \eqref{FBSDEs-ZLQ}-\eqref{stationary-ZLQ} and CAREs \eqref{ZLQ-CAREs}.

\begin{theorem}\label{thm-ZLQ-FBSDEs-CAREs}
Suppose that systems $[A,C;B_{1},D_{1}]_{\alpha}$ and $[A,C;B_{2},D_{2}]_{\alpha}$ are $L^{2}$-stablizable. If the following conditions hold:
\begin{description}
\item[(i)] The convexity-concavity condition \eqref{ZLQ-convexity-concavity} holds;
\item[(ii)]The CAREs \eqref{ZLQ-CAREs} admits a static stabilizing solution $\mathbf{P}\in\mathcal{D}\left(\mathbb{S}^n\right)$;
\item[(iii)] The following BSDE
  \begin{equation}\label{eta-ZLQ}
      \begin{aligned}
        d\eta&=-\big\{\big[A(\alpha)^{\top}-\mathcal{L}(P,\alpha)\mathcal{N}(P,\alpha)^{\dag}B(\alpha)^{\top}\big]\eta+\big[C(\alpha)^{\top}-\mathcal{L}(P,\alpha)\mathcal{N}(P,\alpha)^{\dag}D(\alpha)^{\top}\big]\zeta\\
        &\quad+\big[C(\alpha)^{\top}-\mathcal{L}(P,\alpha)\mathcal{N}(P,\alpha)^{\dag}D(\alpha)^{\top}\big]P(\alpha)\sigma-\mathcal{L}(P,\alpha)\mathcal{N}(P,\alpha)^{\dag}\rho+P(\alpha)b+q\big\}dt\\
        &\quad+\zeta dW(t)+\mathbf{z}\cdot d\mathbf{\widetilde{N}}(t),\quad t\geq 0,
      \end{aligned}
  \end{equation}
   admits an adapted solution $(\eta,\zeta,\mathbf{z})\in L_{\mathbb{F}}^{2}(\mathbb{R}^{n}\times L_{\mathbb{F}}^{2}(\mathbb{R}^{n})\times \mathcal{D}\left(L_{\mathcal{P}}^{2}(\mathbb{R}^{n})\right)$ such that
   \begin{equation}\label{ZLQ-eta-constraint}
    \widetilde{\rho}(t)\triangleq B(\alpha_{t})^{\top}\eta(t)+D(\alpha_{t})^{\top}\zeta(t)+D(\alpha_{t})^{\top}P(\alpha_{t})\sigma(t)+\rho(t)\in\mathcal{R}(\mathcal{N}(P,\alpha_{t})),\quad a.e.,\quad a.s..
   \end{equation}
\end{description}
Then, for any $(x,i)\in\mathbb{R}^{n}\times\mathcal{S}$, the optimality system  \eqref{FBSDEs-ZLQ}-\eqref{stationary-ZLQ} admits a solution.  In addition, the Problem (M-ZLQ) is open-loop solvable, and the corresponding closed-loop representation strategy is given by
\begin{equation}\label{ZLQ-closed-loop}
  \left\{
  \begin{aligned}
    \Theta^{*}(i)&=-\mathcal{N}(P,i)^{\dag}\mathcal{L}(P,i)^{\top}+\left[I-\mathcal{N}(P,i)^{\dag}\mathcal{N}(P,i)\right]\Pi(i),\quad\forall i\in\mathcal{S},\\
    \nu^{*}&=-\mathcal{N}(P,\alpha)^{\dag}\widetilde{\rho}
   +\left[I-\mathcal{N}(P,\alpha)^{\dag}\mathcal{N}(P,\alpha)\right]\nu,
  \end{aligned}
  \right.
\end{equation}
where $\mathbf{\Pi}=\left[\Pi(1),\Pi(2),...,\Pi(L)\right]\in\mathcal{D}\left(\mathbb{R}^{m\times n}\right)$ is chosen such that $\mathbf{\Theta}^{*}\in\mathcal{H}[A,C;B,D]_{\alpha}$ and $\nu\in L_{\mathbb{F}}^{2}(\mathbb{R}^{m})$.
\end{theorem}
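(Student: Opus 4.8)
The plan is to exhibit, for every $(x,i)\in\mathbb{R}^{n}\times\mathcal{S}$, an adapted solution of the optimality system \eqref{FBSDEs-ZLQ}--\eqref{stationary-ZLQ}, and then invoke Theorem \ref{thm-ZLQ-open-solvability}: since condition (i) already supplies the convexity--concavity requirement, producing such a solution will immediately give an open-loop saddle point. The candidate is built by the four-step (decoupling) method, exactly as in the derivation preceding Theorem \ref{thm-GLQ-open-closed}, but now with a single symmetric decoupling field $\mathbf{P}$ because $(Y_{1}^{*},Z_{1}^{*},\mathbf{\Gamma}_{1}^{*})=-(Y_{2}^{*},Z_{2}^{*},\mathbf{\Gamma}_{2}^{*})$ in the zero-sum case. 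Concretely, I set $\mathbf{\Theta}^{*}$ and $\nu^{*}$ as in \eqref{ZLQ-closed-loop}, let $X^{*}\triangleq X(\cdot\,;x,i,\mathbf{\Theta}^{*},\nu^{*})$ solve the closed-loop state equation \eqref{state-closed}, and define $u^{*}\triangleq\Theta^{*}(\alpha)X^{*}+\nu^{*}$, together with
$Y^{*}\triangleq P(\alpha)X^{*}+\eta$, $Z^{*}\triangleq P(\alpha)\big[C(\alpha)X^{*}+D(\alpha)u^{*}+\sigma\big]+\zeta$, and $\Gamma_{j}^{*}(t)\triangleq\big[P(j)-P(\alpha_{t-})\big]X^{*}(t-)+z_{j}(t)$.

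\textbf{Verifying the optimality system.} Applying It\^o's rule to $Y^{*}=P(\alpha)X^{*}+\eta$ and matching the diffusion and jump parts of \eqref{FBSDEs-ZLQ} forces precisely the above formulas for $Z^{*}$ and $\mathbf{\Gamma}^{*}$, while matching the drift gives an identity of the form $0=\mathcal{M}(P,\alpha)X^{*}+\mathcal{L}(P,\alpha)u^{*}+A(\alpha)^{\top}\eta+C(\alpha)^{\top}\zeta+C(\alpha)^{\top}P(\alpha)\sigma+q+P(\alpha)b+\beta$, where $\beta$ is the drift of $\eta$ and $\mathcal{M},\mathcal{L},\mathcal{N}$ are as in \eqref{notations-3}. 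Substituting $Y^{*},Z^{*}$ into the stationary condition \eqref{stationary-ZLQ} reduces it to $\mathcal{N}(P,\alpha)u^{*}+\mathcal{L}(P,\alpha)^{\top}X^{*}+\widetilde{\rho}=0$ with $\widetilde{\rho}$ as in \eqref{ZLQ-eta-constraint}. Here is the step needing care: from the second equation of \eqref{ZLQ-CAREs} and symmetry of $\mathcal{N}(P,i)$ one gets $\mathcal{L}(P,i)^{\top}=\mathcal{N}(P,i)\mathcal{N}(P,i)^{\dag}\mathcal{L}(P,i)^{\top}$, i.e. each column of $\mathcal{L}(P,i)^{\top}$ lies in $\mathcal{R}(\mathcal{N}(P,i))$; combined with $\widetilde{\rho}\in\mathcal{R}(\mathcal{N}(P,\alpha))$ from \eqref{ZLQ-eta-constraint} and the identity $\mathcal{N}(P,i)\big[I-\mathcal{N}(P,i)^{\dag}\mathcal{N}(P,i)\big]=0$, plugging in the explicit $\Theta^{*}$ and $\nu^{*}$ from \eqref{ZLQ-closed-loop} makes the left side vanish identically. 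The same two facts give $\mathcal{L}(P,i)\Theta^{*}(i)=-\mathcal{L}(P,i)\mathcal{N}(P,i)^{\dag}\mathcal{L}(P,i)^{\top}$ and $\mathcal{L}(P,\alpha)\nu^{*}=-\mathcal{L}(P,\alpha)\mathcal{N}(P,\alpha)^{\dag}\widetilde{\rho}$; inserting these into the matched drift identity, the coefficient of $X^{*}$ collapses to the first equation of \eqref{ZLQ-CAREs} and hence vanishes, while the remaining (state-free) terms force $\beta$ to equal exactly the drift of \eqref{eta-ZLQ}. Thus, because $\mathbf{P}$ solves \eqref{ZLQ-CAREs} and $(\eta,\zeta,\mathbf{z})$ solves \eqref{eta-ZLQ}--\eqref{ZLQ-eta-constraint}, the quadruple $(X^{*},Y^{*},Z^{*},\mathbf{\Gamma}^{*})$ together with $u^{*}$ satisfies \eqref{FBSDEs-ZLQ}--\eqref{stationary-ZLQ}.

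\textbf{Integrability and conclusion.} Since $\mathbf{P}$ is a \emph{static stabilizing} solution, $\mathbf{\Theta}^{*}=\mathcal{K}(\mathbf{\Pi})\in\mathcal{H}[A,C;B,D]_{\alpha}$; the inhomogeneous terms $B(\alpha)\nu^{*}+b$ and $D(\alpha)\nu^{*}+\sigma$ lie in $L_{\mathbb{F}}^{2}(\mathbb{R}^{n})$ because the coefficients are bounded and $\eta,\zeta,\mathbf{z},\nu,b,\sigma\in L^{2}$, so $\nu^{*}\in L_{\mathbb{F}}^{2}(\mathbb{R}^{m})$. By the perturbation stability of $L^{2}$-stable systems (as in \cite{Wu-etal}), $X^{*}\in L_{\mathbb{F}}^{2}(\mathbb{R}^{n})$; boundedness of $P$ then gives $Y^{*},Z^{*}\in L_{\mathbb{F}}^{2}(\mathbb{R}^{n})$, $\mathbf{\Gamma}^{*}\in\mathcal{D}(L_{\mathcal{P}}^{2}(\mathbb{R}^{n}))$, and $u^{*}=(u_{1}^{*\top},u_{2}^{*\top})^{\top}\in\mathcal{U}_{ad}(x,i)$. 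Hence $(X^{*},Y^{*},Z^{*},\mathbf{\Gamma}^{*})$ is an adapted solution of the optimality system satisfying the stationary condition, so by Theorem \ref{thm-ZLQ-open-solvability} and hypothesis (i), $(u_{1}^{*},u_{2}^{*})$ is an open-loop saddle point of Problem (M-ZLQ) for $(x,i)$. As $(x,i)$ was arbitrary and $u^{*}=\Theta^{*}(\alpha)X(\cdot\,;x,i,\mathbf{\Theta}^{*},\nu^{*})+\nu^{*}$ with $(\mathbf{\Theta}^{*},\nu^{*})\in\mathcal{H}[A,C;B,D]_{\alpha}\times L_{\mathbb{F}}^{2}(\mathbb{R}^{m})$, Problem (M-ZLQ) is open-loop solvable and $(\mathbf{\Theta}^{*},\nu^{*})$ is a closed-loop representation strategy in the sense of Definition \ref{def-ZLQ-open-loop-equilibrium}.

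\textbf{Main obstacle.} The only genuinely delicate part is the pseudoinverse bookkeeping in the second paragraph: one must show that the ansatz-generated control $u^{*}$ exactly annihilates the stationary condition, which uses \emph{both} the range condition \eqref{ZLQ-eta-constraint} on $\widetilde{\rho}$ and the second CARE in \eqref{ZLQ-CAREs} (guaranteeing $\mathcal{L}(P,i)^{\top}\in\mathcal{R}(\mathcal{N}(P,i))$), together with symmetry of $\mathcal{N}(P,i)$ so that $\mathcal{N}\mathcal{N}^{\dag}=\mathcal{N}^{\dag}\mathcal{N}$ is the orthogonal projection onto $\mathcal{R}(\mathcal{N})$. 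Everything else is either a routine It\^o computation or an appeal to the $L^{2}$-stability machinery of \cite{Wu-etal}; note also that uniqueness is not asserted, consistent with the freedom in the choice of $\mathbf{\Pi}$ and $\nu$.
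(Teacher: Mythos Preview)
Your proof is correct and follows essentially the same route as the paper: construct the candidate $(X^{*},Y^{*},Z^{*},\mathbf{\Gamma}^{*},u^{*})$ via the ansatz $Y^{*}=P(\alpha)X^{*}+\eta$, verify the stationary condition and backward equation by direct computation using the CAREs \eqref{ZLQ-CAREs} and the BSDE \eqref{eta-ZLQ}, and then invoke Theorem \ref{thm-ZLQ-open-solvability}. Your treatment of the pseudoinverse bookkeeping (making explicit the role of the range conditions $\mathcal{L}(P,i)^{\top}\in\mathcal{R}(\mathcal{N}(P,i))$ and $\widetilde{\rho}\in\mathcal{R}(\mathcal{N}(P,\alpha))$) and the $L^{2}$-integrability of the constructed processes is more explicit than the paper's, but the underlying argument is the same.
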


\begin{proof}

Let $(\mathbf{\Theta}^{*},\nu^{*})$ defined in \eqref{ZLQ-closed-loop} and for any $(x,i)\in\mathbb{R}^{n}\times\mathcal{S}$,
$$
\left\{
\begin{aligned}
&X^{*}\triangleq X(\cdot;x,i,\mathbf{\Theta}^{*},\nu^{*}),\\
&Y^{*}\triangleq P(\alpha)X^{*}(\cdot;x,i)+\eta,\\
&Z^{*}\triangleq P(\alpha)\big[C(\alpha)X^{*}(\cdot;x,i)+D(\alpha)u^{*}(\cdot;x,i)+\sigma\big]+\zeta,\\
&\Gamma_{j}^{*}(t)\triangleq \big[P(j)-P(\alpha(t-))\big]X^{*}(t;x,i)+z_{j}(t),\quad t\geq 0,\quad j\in\mathcal{S},\\
&u^{*}\triangleq \Theta^{*}(\alpha)X^{*}(\cdot;x,i)+\nu^{*},
\end{aligned}
\right.
$$
We claim that $(X^{*},Y^{*},Z^{*},\mathbf{\Gamma}^{*},u^{*})$ defined above solves the optimality system \eqref{FBSDEs-ZLQ}-\eqref{stationary-ZLQ}.
Obviously, by unique solvability, we have  $X^{*}= X(\cdot;x,i,\mathbf{\Theta}^{*},\nu^{*})=X(\cdot;x,i,u^{*})$.
Note that
\begin{equation}
    \begin{aligned}
    \mathcal{N}(P,\alpha)u^{*}&=-\mathcal{N}(P,\alpha)\mathcal{N}(P,\alpha)^{\dag}\mathcal{L}(P,\alpha)^{\top}X^{*}-\mathcal{N}(P,\alpha)\mathcal{N}(P,\alpha)^{\dag}\widetilde{\rho}\\
    &=-\mathcal{L}(P,\alpha)^{\top}X^{*}-\widetilde{\rho}.
    \end{aligned}
\end{equation}
Hence,
\begin{align*}
  &\quad B(\alpha)^{\top}Y^{*}+ D(\alpha)^{\top}Z^{*}+S(\alpha)X^{*}+R(\alpha)u^{*}+\rho\\
  &=B(\alpha)^{\top}(P(\alpha)X^{*}+\eta)+ D(\alpha)^{\top}\big[P(\alpha)\big(C(\alpha)X^{*}+D(\alpha)u^{*}+\sigma\big)+\zeta\big]+S(\alpha)X^{*}+R(\alpha)u^{*}+\rho\\
  &=\mathcal{N}(P,\alpha)u^{*}+\mathcal{L}(P,\alpha)^{\top}X^{*}+\widetilde{\rho}\\
  &=0,
\end{align*}
which proves that the stationary condition holds.

On the other hand, let
\begin{align*}
\Lambda &\triangleq -\big\{\big[A(\alpha)^{\top}-\mathcal{L}(P,\alpha)\mathcal{N}(P,\alpha)^{\dag}B(\alpha)^{\top}\big]\eta+\big[C(\alpha)^{\top}-\mathcal{L}(P,\alpha)\mathcal{N}(P,\alpha)^{\dag}D(\alpha)^{\top}\big]\zeta\\
        &\quad+\big[C(\alpha)^{\top}-\mathcal{L}(P,\alpha)\mathcal{N}(P,\alpha)^{\dag}D(\alpha)^{\top}\big]P(\alpha)\sigma-\mathcal{L}(P,\alpha)\mathcal{N}(P,\alpha)^{\dag}\rho+P(\alpha)b+q\big\}.
\end{align*}
Then applying It\^o's rule to $P(\alpha)X^{*}+\eta$, one has
\begin{align*}
  dY^{*}&=\big\{P(\alpha)\big[A(\alpha)X^{*}+B(\alpha)u^{*}+b\big]+\sum_{j=1}^{L}\pi_{\alpha_{t-},j}P(j)X^{*}+\Lambda\big\}dt+Z^{*}(t)dW(t)+\Gamma^{*}(t)\cdot d\widetilde{N}(t) \\
  &=\big\{\big[\mathcal{L}(P,i) \mathcal{N}(P,i)^{\dag} \mathcal{L}(P,i)^{\top}-A(\alpha)^{\top}P(\alpha)-C(\alpha)^{\top}P(\alpha)C(\alpha)-Q(\alpha)\big]X^{*}+P(\alpha)B(\alpha)u^{*}\\
  &\quad +P(\alpha)b+\Lambda\big\}dt+Z^{*}(t)dW(t)+\Gamma^{*}(t)\cdot d\widetilde{N}(t) \\
  &=\big\{\mathcal{L}(P,i) \mathcal{N}(P,i)^{\dag} \mathcal{L}(P,i)^{\top}X^{*}-A(\alpha)^{\top}\big(Y^{*}-\eta\big)-C(\alpha)^{\top}\big[Z^{*}-P(\alpha)D(\alpha)u^{*}-P(\alpha)\sigma-\zeta\big]\\
 &\quad -Q(\alpha)X^{*}+P(\alpha)B(\alpha)u^{*} +P(\alpha)b+\Lambda\big\}dt+Z^{*}(t)dW(t)+\Gamma^{*}(t)\cdot d\widetilde{N}(t) \\
 &=-\big\{A(\alpha)^{\top}Y^{*}+C(\alpha)^{\top}Z^{*}+Q(\alpha)X^{*}+S(\alpha)^{\top}u^{*}
 -\mathcal{L}(P,i)\big[u^{*}+\mathcal{N}(P,i)^{\dag}\mathcal{L}(P,i)^{\top}X^{*}\big]\\
 &\quad -\big[A(\alpha)^{\top}\eta+C(\alpha)^{\top}\zeta+-C(\alpha)^{\top}P(\alpha)\sigma+P(\alpha)b+\Lambda\big]\big\}dt+Z^{*}(t)dW(t)+\Gamma^{*}(t)\cdot d\widetilde{N}(t) \\
 &=-\big\{A(\alpha)^{\top}Y^{*}+C(\alpha)^{\top}Z^{*}+Q(\alpha)X^{*}+S(\alpha)^{\top}u^{*}+q\\
 &\quad-\mathcal{L}(P,\alpha)\big[u^{*}+\mathcal{N}(P,\alpha)^{\dag}\mathcal{L}(P,\alpha)^{\top}X^{*}+\mathcal{N}(P,\alpha)^{\dag}\widetilde{\rho}\big]\big\}dt+Z^{*}(t)dW(t)+\Gamma^{*}(t)\cdot d\widetilde{N}(t) \\
 &=-\big\{A(\alpha)^{\top}Y^{*}+C(\alpha)^{\top}Z^{*}+Q(\alpha)X^{*}+S(\alpha)^{\top}u^{*}+q\big]\big\}dt+Z^{*}(t)dW(t)+\Gamma^{*}(t)\cdot d\widetilde{N}(t).
\end{align*}
This proves our claim and the desired result follows from Theorem \ref{thm-ZLQ-open-solvability}.
\end{proof}

If  $b=\sigma= q=0$, $\rho_{1}=0$, $\rho_{2}=0$, then one can find from the proof of the above theorem that the BSDE \eqref{eta-ZLQ} admit the unique solution $(\eta,\zeta,\mathbf{z})=(0,0,\mathbf{0})$.
Hence, we have the following result.
\begin{corollary}\label{coro-ZLQ-1}
Suppose that systems $[A,C;B_{1},D_{1}]_{\alpha}$ and $[A,C;B_{2},D_{2}]_{\alpha}$ are $L^{2}$-stablizable. If the following conditions hold:
\begin{description}
\item[(i)]The convexity-concavity condition \eqref{ZLQ-convexity-concavity} holds;
  \item[(ii)] The CAREs \eqref{ZLQ-CAREs} admits a static stabilizing solution $\mathbf{P}\in\mathcal{D}\left(\mathbb{S}^n\right)$;
\end{description}
Then the Problem (M-ZLQ)$^{0}$ is open-loop solvable, and the corresponding closed-loop representation strategy is given by
\begin{equation}\label{ZLQ-closed-loop-0}
  \left\{
  \begin{aligned}
   & \Theta^{*}(i)=-\mathcal{N}(P,i)^{\dag}\mathcal{L}(P,i)^{\top}+\left[I-\mathcal{N}(P,i)^{\dag}\mathcal{N}(P,i)\right]\Pi(i),\quad\forall i\in\mathcal{S},\\
    &\nu^{*}=\left[I-\mathcal{N}(P,\alpha)^{\dag}\mathcal{N}(P,\alpha)\right]\nu,
  \end{aligned}
  \right.
\end{equation}
where $\mathbf{\Pi}=\left[\Pi(1),\Pi(2),...,\Pi(L)\right]\in\mathcal{D}\left(\mathbb{R}^{m\times n}\right)$ is chosen such that $\mathbf{\Theta}^{*}\in\mathcal{H}[A,C;B,D]_{\alpha}$ and $\nu\in L_{\mathbb{F}}^{2}(\mathbb{R}^{m})$.
\end{corollary}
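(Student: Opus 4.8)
The plan is to obtain this corollary as a direct specialization of Theorem \ref{thm-ZLQ-FBSDEs-CAREs} to the homogeneous data $b=\sigma=q=0$, $\rho_{1}=\rho_{2}=0$. Conditions (i) and (ii) of the corollary, together with the $L^{2}$-stabilizability of $[A,C;B_{1},D_{1}]_{\alpha}$ and $[A,C;B_{2},D_{2}]_{\alpha}$, are precisely hypotheses (i)--(ii) of that theorem, so the only genuine task is to check that its hypothesis (iii) is automatically fulfilled in the homogeneous case, and then to simplify the resulting closed-loop representation strategy \eqref{ZLQ-closed-loop}.

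First I would observe that with $b=\sigma=q=0$ and $\rho=0$ the driver of the BSDE \eqref{eta-ZLQ} becomes homogeneous and linear in $(\eta,\zeta,\mathbf{z})$, so that $(\eta,\zeta,\mathbf{z})=(0,0,\mathbf{0})$ is an adapted solution in $L_{\mathbb{F}}^{2}(\mathbb{R}^{n})\times L_{\mathbb{F}}^{2}(\mathbb{R}^{n})\times\mathcal{D}(L_{\mathcal{P}}^{2}(\mathbb{R}^{n}))$. For this choice one has $\widetilde{\rho}=B(\alpha)^{\top}\eta+D(\alpha)^{\top}\zeta+D(\alpha)^{\top}P(\alpha)\sigma+\rho=0$, which lies trivially in $\mathcal{R}(\mathcal{N}(P,\alpha))$, so the constraint \eqref{ZLQ-eta-constraint} is satisfied. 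Hence all three hypotheses of Theorem \ref{thm-ZLQ-FBSDEs-CAREs} hold, Problem (M-ZLQ)$^{0}$ is open-loop solvable, and its closed-loop representation strategy is given by \eqref{ZLQ-closed-loop}; substituting $\widetilde{\rho}=0$ into the formula for $\nu^{*}$ there collapses it to $\nu^{*}=[I-\mathcal{N}(P,\alpha)^{\dag}\mathcal{N}(P,\alpha)]\nu$, which is exactly the second line of \eqref{ZLQ-closed-loop-0}, while the formula for $\Theta^{*}(i)$ is unchanged.

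The one subtlety --- and the point the remark preceding the corollary is making --- is that to justify that $(0,0,\mathbf{0})$ is \emph{the} solution entering the construction (and not merely \emph{a} solution), one should invoke uniqueness of the $L^{2}$-stable adapted solution of \eqref{eta-ZLQ}. This is where the stabilizing property of $\mathbf{\Theta}^{*}\in\mathcal{H}[A,C;B,D]_{\alpha}$ from Definition \ref{def-stabilizing-solution-CAREs} is used: the coefficients $A(\alpha)^{\top}-\mathcal{L}(P,\alpha)\mathcal{N}(P,\alpha)^{\dag}B(\alpha)^{\top}$ and $C(\alpha)^{\top}-\mathcal{L}(P,\alpha)\mathcal{N}(P,\alpha)^{\dag}D(\alpha)^{\top}$ are the adjoint coefficients of the closed-loop system driven by a stabilizer, so the associated homogeneous infinite-horizon BSDE admits only the trivial $L^{2}$-solution. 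I would cite the same well-posedness result from \cite{Wu-etal} that already underlies hypothesis (iii) of Theorem \ref{thm-ZLQ-FBSDEs-CAREs} to conclude $(\eta,\zeta,\mathbf{z})=(0,0,\mathbf{0})$, after which the corollary follows. I expect this uniqueness argument to be the main (indeed the only) obstacle; everything else is substitution into the statement of Theorem \ref{thm-ZLQ-FBSDEs-CAREs}.
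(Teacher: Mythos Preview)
Your proposal is correct and follows essentially the same route as the paper: specialize Theorem~\ref{thm-ZLQ-FBSDEs-CAREs} to homogeneous data, observe that $(\eta,\zeta,\mathbf{z})=(0,0,\mathbf{0})$ solves \eqref{eta-ZLQ} with $\widetilde{\rho}=0$, and read off \eqref{ZLQ-closed-loop-0}. One small point: you frame uniqueness of the BSDE solution as ``the main (indeed the only) obstacle,'' but in fact Theorem~\ref{thm-ZLQ-FBSDEs-CAREs} only requires \emph{existence} of a solution satisfying \eqref{ZLQ-eta-constraint}, and the closed-loop representation \eqref{ZLQ-closed-loop} is then expressed in terms of whichever solution one has chosen --- so taking $(0,0,\mathbf{0})$ already suffices without any appeal to uniqueness.
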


\subsection{Closed-loop solvability for Problem (M-ZLQ)}
The following result provides the closed-loop solvability for Problem (M-ZLQ).
\begin{theorem}\label{thm-ZLQ-closed}
 A 4-tuple $(\mathbf{\widehat{\Theta}_{1}},\widehat{\nu}_{1};\mathbf{\widehat{\Theta}_{2}},\widehat{\nu}_{2})\in\mathcal{D}\left(\mathbb{R}^{m_{1}\times n}\right)\times L_{\mathbb{F}}^{2}(\mathbb{R}^{m_{1}})\times\mathcal{D} \left(\mathbb{R}^{m_{2}\times n}\right)\times L_{\mathbb{F}}^{2}(\mathbb{R}^{m_{2}})$ is a closed-loop Nash equilibrium strategy of Problem (M-ZLQ) if and only if:
 \begin{description}
 \item[(i)] $\mathbf{\widehat{\Theta}}= (\mathbf{\widehat{\Theta}_{1}}^{\top},\mathbf{\widehat{\Theta}_{2}}^{\top})^{\top}\in \mathcal{H}[A,C;B,D]_{\alpha}$,
  \item[(ii)] The  CAREs
  \begin{equation}\label{CAREs-ZLQ}
  0=\mathcal{M}(P,i)-\mathcal{L}(P,i)\mathcal{N}(P_{1},i)^{\dag}\mathcal{L}(P,i)^{\top},
  \end{equation}
 admit a solution $\mathbf{P}\in \mathcal{D}(\mathbb{S}^{n})$ such that
 \begin{equation}\label{CAREs-ZLQ-constraint}
\left\{
\begin{aligned}
&\mathcal{N}(P,i)\widehat{\Theta}(i)+\mathcal{L}(P,i)^{\top}=0,\\
&\mathcal{N}_{11}(P,i)\geq 0,\quad \mathcal{N}_{22}(P,i)\leq 0,\quad \forall i\in\mathcal{S},
\end{aligned}
\right.
\end{equation}
where $\mathcal{N}_{kl}(P,i)\triangleq D_{k}(i)^{\top}P(i)D_{l}(i)+R_{kl}(i)$ for any $ k,l\in\{1,2\}.$
\item[(iii)] The BSDE
\begin{equation}\label{ZLQ-eta}
   \begin{aligned}
   d\eta&=-\big\{\big[A(\alpha)^{\top}-\mathcal{L}(P,\alpha)\mathcal{N}(P,\alpha)^{\dag}B(\alpha)^{\top}\big]\eta+\big[C(\alpha)^{\top}-\mathcal{L}(P,\alpha)\mathcal{N}(P,\alpha)^{\dag}D(\alpha)^{\top}\big]\zeta\\
        &\quad+\big[C(\alpha)^{\top}-\mathcal{L}(P,\alpha)\mathcal{N}(P,\alpha)^{\dag}D(\alpha)^{\top}\big]P(\alpha)\sigma-\mathcal{L}(P,\alpha)\mathcal{N}(P,\alpha)^{\dag}\rho+P(\alpha)b+q\big\}dt\\
        &\quad+\zeta dW(t)+\mathbf{z}\cdot d\mathbf{\widetilde{N}}(t),\quad t\geq 0,
   \end{aligned}
  \end{equation}
 admits a solution $(\eta,\zeta,\mathbf{z}\in L_{\mathbb{F}}^{2}(\mathbb{R}^{n})\times L_{\mathbb{F}}^{2}(\mathbb{R}^{n})\times\mathcal{D}\left(L_{\mathcal{P}}^{2}(\mathbb{R}^{n})\right)$ such that
 \begin{equation}\label{CAREs-ZLQ-eta-constraint}
      \mathcal{N}(P,\alpha_{t})\widehat{\nu}(t)+\widetilde{\rho}(t)=0,\quad a.e. \text{ }a.s.,
  \end{equation}
  where $\widehat{\nu}=(\widehat{\nu}_{1}^{\top},\widehat{\nu}_{2}^{\top})^{\top}$ and $\widetilde{\rho}$  is defined in \eqref{ZLQ-eta-constraint}.
  \end{description}
  In this case, the closed-loop equilibrium value function of Problem (M-ZLQ) is given by
  \begin{equation}\label{ZLQ-value}
  \begin{aligned}
  V(x,i)&=\big<P(i)x,x\big>+\mathbb{E}\big\{2\big<\eta(0),x\big>+\int_{0}^{\infty}
      \big[\big<P(\alpha)\sigma,\sigma\big>+2\big<\eta,b\big>+2\big<\zeta,\sigma\big>-\big<\mathcal{N}(P,\alpha)^{\dag}\widetilde{\rho},\widetilde{\rho}\big>]dt\big\}.
  \end{aligned}
  \end{equation}
\end{theorem}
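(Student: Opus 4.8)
The plan is to view Problem (M-ZLQ) as the special case of Problem (M-GLQ) singled out by the constraint \eqref{ZLQ-cost-notation}, apply Theorem \ref{thm-GLQ-closed}, and then use the zero-sum structure to collapse the cross-coupled CAREs and BSDEs into the single system \eqref{CAREs-ZLQ}, \eqref{ZLQ-eta}. First, because $J_{1}=J=-J_{2}$ under \eqref{ZLQ-cost-notation}, the two inequalities in \eqref{ZLQ-closed-loop-Nash-equilibrium-point} are precisely the two inequalities in \eqref{closed-loop-Nash-equilibrium-point}, and condition (i) is identical in Definitions \ref{def-closed-loop-equilibrium} and \ref{def-ZLQ-closed-loop-equilibrium}; hence a $4$-tuple is a closed-loop Nash equilibrium strategy of Problem (M-ZLQ) if and only if it is one of Problem (M-GLQ) with coefficients satisfying \eqref{ZLQ-cost-notation}. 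Applying Theorem \ref{thm-GLQ-closed} in the equivalent form given in the remark following it (i.e. with the CAREs written as \eqref{CAREs-GLQ-1-2}--\eqref{CAREs-GLQ-constraint-2}), this is equivalent to: (i) $\mathbf{\widehat{\Theta}}\in\mathcal{H}[A,C;B,D]_{\alpha}$; (ii$'$) the CAREs \eqref{CAREs-GLQ-1-2}--\eqref{CAREs-GLQ-2-2} admit a solution $(\mathbf{P}_{1},\mathbf{P}_{2})\in\mathcal{D}(\mathbb{S}^{n})\times\mathcal{D}(\mathbb{S}^{n})$ obeying \eqref{CAREs-GLQ-constraint-2}; and (iii$'$) the cross-coupled BSDEs \eqref{GLQ-eta-1}--\eqref{GLQ-eta-2} admit a solution obeying \eqref{CAREs-GLQ-eta-constraint}.

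Next I would show that this coupled system degenerates under \eqref{ZLQ-cost-notation}. The pair in (ii$'$) must satisfy $P_{2}(i)=-P_{1}(i)$: by Corollary \ref{coro-GLQ-0-closed} the homogeneous closed-loop value functions are $V_{k}^{0}(x,i)=\langle P_{k}(i)x,x\rangle$, whereas $V_{1}^{0}=-V_{2}^{0}$ since $J_{1}^{0}=-J_{2}^{0}$; as $P_{1}(i),P_{2}(i)\in\mathbb{S}^{n}$ this forces $P_{1}(i)=-P_{2}(i)\triangleq P(i)$. Similarly the triple in (iii$'$) must satisfy $(\eta_{2},\zeta_{2},\mathbf{z}_{2})=-(\eta_{1},\zeta_{1},\mathbf{z}_{1})$: using \eqref{ZLQ-cost-notation}, $\mathbf{P}_{2}=-\mathbf{P}_{1}$ and the CARE already obtained for $\mathbf{P}$, one checks that $(-\eta_{2},-\zeta_{2},-\mathbf{z}_{2})$ solves the linear (and stable) BSDE \eqref{GLQ-eta-1}, whose $L^{2}$-solution is unique by \cite[Theorem $5.2$]{Wu-etal} since $\mathbf{\widehat{\Theta}}$ stabilizes $[A,C;B,D]_{\alpha}$; set $(\eta,\zeta,\mathbf{z})\triangleq(\eta_{1},\zeta_{1},\mathbf{z}_{1})$.

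It then remains to identify the conditions. Substituting $\mathbf{P}_{1}=-\mathbf{P}_{2}=\mathbf{P}$ into \eqref{CAREs-GLQ-1-2}--\eqref{CAREs-GLQ-constraint-2} and using the notation \eqref{notations-3}, the two CAREs coincide, the two algebraic identities in \eqref{CAREs-GLQ-constraint-2} become the two block-rows of $\mathcal{N}(P,i)\widehat{\Theta}(i)+\mathcal{L}(P,i)^{\top}=0$, and the semidefiniteness conditions become $\mathcal{N}_{11}(P,i)\geq0$, $\mathcal{N}_{22}(P,i)\leq0$, i.e. \eqref{CAREs-ZLQ-constraint}. Since $\mathcal{N}(P,i)\in\mathbb{S}^{m}$ and $\mathcal{N}(P,i)\widehat{\Theta}(i)=-\mathcal{L}(P,i)^{\top}$, one gets $\mathcal{L}(P,i)^{\top}\in\mathcal{R}(\mathcal{N}(P,i))$, whence $\mathcal{L}(P,i)[I-\mathcal{N}(P,i)^{\dag}\mathcal{N}(P,i)]=0$ and $\mathcal{L}(P,i)\widehat{\Theta}(i)=-\mathcal{L}(P,i)\mathcal{N}(P,i)^{\dag}\mathcal{L}(P,i)^{\top}$, so \eqref{CAREs-GLQ-1-2} turns into \eqref{CAREs-ZLQ}; an analogous manipulation---mirroring the passage from \eqref{GLQ-eta-k-1} to \eqref{GLQ-eta-1} in the proof of Theorem \ref{thm-GLQ-closed}---turns \eqref{GLQ-eta-1} into \eqref{ZLQ-eta} and \eqref{CAREs-GLQ-eta-constraint} into \eqref{CAREs-ZLQ-eta-constraint}, with $\widetilde{\rho}$ as in \eqref{ZLQ-eta-constraint}. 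Conversely, given $\mathbf{P},(\eta,\zeta,\mathbf{z})$ as in (ii)--(iii) of the present theorem, the pair $(\mathbf{P},-\mathbf{P})$ together with $(\eta,\zeta,\mathbf{z};-\eta,-\zeta,-\mathbf{z})$ satisfies (ii$'$)--(iii$'$) by reversing these computations. Finally, inserting $P_{1}=P$, $\eta_{1}=\eta$, $\zeta_{1}=\zeta$ and the constraint \eqref{CAREs-ZLQ-eta-constraint} into \eqref{GLQ-value-1} and simplifying the resulting Schur-complement expression yields \eqref{ZLQ-value}.

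The main obstacle is the bookkeeping in this identification step: verifying that the constraint relations genuinely allow one to pass back and forth between the quadratic-form CAREs \eqref{CAREs-GLQ-1-2}--\eqref{CAREs-GLQ-2-2} and the pseudo-inverse CARE \eqref{CAREs-ZLQ}, to reduce the coupled BSDEs \eqref{GLQ-eta-1}--\eqref{GLQ-eta-2}, and to collapse the value-function formula \eqref{GLQ-value-1} (a completion-of-squares / Schur-complement identity). A secondary point requiring care is the uniqueness of the $L^{2}$-solution invoked in the second step to conclude $(\eta_{2},\zeta_{2},\mathbf{z}_{2})=-(\eta_{1},\zeta_{1},\mathbf{z}_{1})$.
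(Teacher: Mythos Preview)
Your proposal is correct and follows essentially the same route as the paper: both treat Problem (M-ZLQ) as the special case of Problem (M-GLQ) under \eqref{ZLQ-cost-notation}, invoke Theorem \ref{thm-GLQ-closed}, use Corollary \ref{coro-GLQ-0-closed} to force $P_{1}=-P_{2}\triangleq P$, and then collapse the coupled CAREs/BSDEs and the value-function formula. The only tactical difference is in the BSDE step: the paper first rewrites \eqref{GLQ-eta-1}--\eqref{GLQ-eta-2} (using both the $\widehat{\Theta}$- and $\widehat{\nu}$-constraints) into the form \eqref{ZLQ-eta-1}, then \emph{adds} the two equations to obtain the homogeneous BSDE \eqref{ZLQ-eta-1+2} for $(\eta_{1}+\eta_{2},\zeta_{1}+\zeta_{2},\mathbf{z}_{1}+\mathbf{z}_{2})$ and invokes \cite[Proposition~2.7]{Wu-etal} for uniqueness; you instead propose checking directly that $(-\eta_{2},-\zeta_{2},-\mathbf{z}_{2})$ solves the BSDE for $\eta_{1}$. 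Your variant works, but be aware that it is not a one-line substitution in \eqref{GLQ-eta-1} as written---the generators of \eqref{GLQ-eta-1} and \eqref{GLQ-eta-2} are not manifestly anti-symmetric until one uses the constraints \eqref{CAREs-ZLQ-constraint-2} and \eqref{CAREs-ZLQ-eta-constraint-2} to simplify (exactly as the paper does to reach \eqref{ZLQ-eta-1}); after that simplification your claim and the paper's addition argument are algebraically equivalent. Also, the precise uniqueness reference in \cite{Wu-etal} used by the paper is Proposition~2.7 rather than Theorem~5.2.
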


\begin{proof}
We still consider the Problem (M-ZLQ) as a special case of Problem (M-GLQ) in the sense that the condition \eqref{ZLQ-cost-notation} holds. Then, by Corollary \ref{coro-GLQ-0-closed}, we have
\begin{equation}\label{P1-P2}
  \big<P_{1}(i)x,x\big>=V_{1}^{0}(x,i)=-V_{2}^{0}(x,i)=-\big<P_{2}(i)x,x\big>,\quad \forall (x,i)\in\mathbb{R}^{n}\times\mathcal{S},
\end{equation}
which implies $P_{1}(i)= -P_{2}(i)$ for every $i\in \mathcal{S}$. Without loss of generality, in the rest of this section, we let $P(i)\triangleq P_{1}(i)=-P_{2}(i)$ and define
\begin{equation*}
\mathcal{L}_{k}(P,i)\triangleq P(i)B_{k}(i)+C(i)^{\top}P(i)D_{k}(i)+S_{k}(i)^{\top},
\end{equation*}
Then
$$
\begin{array}{c}
\mathcal{M}(P,i)= \mathcal{M}_{1}(P_{1},i)=-\mathcal{M}_{2}(P_{2},i),\\[2mm]
\mathcal{L}(P,i)=\left(
 \mathcal{L}_{1}(P,i),
 \mathcal{L}_{2}(P,i)
 \right)
 =\left(
 \mathcal{L}_{1}^{1}(P_{1},i),
 \mathcal{L}_{2}^{1}(P_{1},i)
 \right)
 =-\left(
 \mathcal{L}_{1}^{2}(P_{2},i),
 \mathcal{L}_{2}^{2}(P_{2},i)
 \right),\\[2mm]
\mathcal{N}(P,i)\!=\!\left(\!
 \begin{array}{cc}
 \mathcal{N}_{11}(P,i)& \mathcal{N}_{12}(P,i)\\
 \mathcal{N}_{21}(P,i)& \mathcal{N}_{22}(P,i)
 \end{array}
 \!\right)
 \!=\!\left(\!
 \begin{array}{cc}
 \mathcal{N}_{11}^{1}(P_{1},i)& \mathcal{N}_{12}^{1}(P_{1},i)\\
 \mathcal{N}_{21}^{1}(P_{1},i)& \mathcal{N}_{22}^{1}(P_{1},i)
 \end{array}
 \!\right)
  \!=\!-\!\left(\!
 \begin{array}{cc}
 \mathcal{N}_{11}^{2}(P_{2},i)& \mathcal{N}_{12}^{2}(P_{2},i)\\
 \mathcal{N}_{21}^{2}(P_{2},i)& \mathcal{N}_{22}^{2}(P_{2},i)
 \end{array}
 \!\right),
\end{array}
$$
Consequently, the conditions \eqref{CAREs-GLQ-constraint} in this special case is equivalent to
\begin{equation}\label{CAREs-ZLQ-constraint-2}
\left\{
\begin{aligned}
&\mathcal{N}_{11}(P,i)\widehat{\Theta}_{1}(i)+\mathcal{N}_{12}(P,i)\widehat{\Theta}_{2}(i)+\mathcal{L}_{1}(P,i)^{\top}=0,\\
&\mathcal{N}_{21}(P,i)\widehat{\Theta}_{1}(i)+\mathcal{N}_{22}(P,i)\widehat{\Theta}_{2}(i)+\mathcal{L}_{2}(P,i)^{\top}=0,\\
&\mathcal{N}_{11}(P,i)\geq 0,\quad \mathcal{N}_{22}(P,i)\leq 0,
\end{aligned}
\right.
\end{equation}
which in turn is equivalent to \eqref{CAREs-ZLQ-constraint}.
It follows from equations \eqref{CAREs-GLQ-1} and \eqref{CAREs-ZLQ-constraint-2}, we have
\begin{align*}
 0&=\mathcal{M}(P,i)-\widehat{\Theta}_{1}(i)^{\top}\mathcal{N}_{11}(P,i)\widehat{\Theta}_{1}-\widehat{\Theta}_{2}(i)^{\top}\mathcal{N}_{22}(P,i)\widehat{\Theta}_{2}\\
  &\quad+\big[\mathcal{N}_{22}(P,i)\widehat{\Theta}_{2}(i)+\mathcal{L}_{2}(P,i)^{\top}\big]^{\top}\widehat{\Theta}_{2}
  +\widehat{\Theta}_{2}(i)^{\top}\big[\mathcal{N}_{22}(P,i)\widehat{\Theta}_{2}(i)+\mathcal{L}_{2}(P,i)^{\top}\big] \\
  &=\mathcal{M}(P,i)-\widehat{\Theta}(i)^{\top}\mathcal{N}(P,i)\widehat{\Theta}.
  \end{align*}
 Noting that \eqref{CAREs-ZLQ-constraint} implies
$$\widehat{\Theta}(i)^{\top}\mathcal{N}(P,i)\widehat{\Theta}(i)=\mathcal{L}(P,i)\mathcal{N}(P_{1},i)^{\dag}\mathcal{L}(P,i)^{\top}.$$
Combining the above two equations,  we obtain that $\mathbf{P}$ solves CAREs \eqref{CAREs-ZLQ} and such that condition \eqref{CAREs-ZLQ-constraint}.

On the other hand, the condition \eqref{CAREs-GLQ-eta-constraint} in this special case becomes
\begin{equation}\label{CAREs-ZLQ-eta-constraint-2}
  \left\{
      \begin{aligned}
      &\mathcal{N}_{11}(P,\alpha)\widehat{\nu}_{1}+\mathcal{N}_{12}(P,\alpha)\widehat{\nu}_{2}+\bar{\rho}^{1}=0,\\
      &\mathcal{N}_{21}(P,\alpha)\widehat{\nu}_{1}+\mathcal{N}_{22}(P,\alpha)\widehat{\nu}_{2}-\bar{\rho}^{2}=0,
      \end{aligned}
      \right.
  \end{equation}
  where
  \begin{align*}
 &\bar{\rho}^{1}(t)=B_{1}(\alpha_{t})^{\top}\eta_{1}(t)+D_{1}(\alpha_{t})^{\top}\zeta_{1}(t)
+D_{1}(\alpha_{t})^{\top}P(\alpha_{t})\sigma(t)+\rho_{1}(t),\\
&\bar{\rho}^{2}(t)=B_{2}(\alpha_{t})^{\top}\eta_{2}(t)+D_{2}(\alpha_{t})^{\top}\zeta_{2}(t)
-D_{2}(\alpha_{t})^{\top}P(\alpha_{t})\sigma(t)-\rho_{2}(t).
  \end{align*}
Then we can simplify \eqref{GLQ-eta-1}-\eqref{GLQ-eta-2} as
\begin{equation}\label{ZLQ-eta-1}
    \begin{aligned}
    d\eta_{1}&=-\Big\{
   A(\alpha)^{\top}\eta_{1}+C(\alpha)^{\top}\zeta_{1}+C(\alpha)^{\top}P(\alpha)\sigma+P(\alpha)b+q
   +\widehat{\Theta}_{1}(\alpha)^{\top}\bar{\rho}^{1}\\
   &\quad +\widehat{\Theta}_{2}(\alpha)^{\top}\big[B_{2}(\alpha)^{\top}\eta_{1}+D_{2}(\alpha)^{\top}\zeta_{1}
   +D_{2}(\alpha)^{\top}P(\alpha)\sigma+\rho_{2}\big]
   \Big\}dt+\zeta_{1}dW(t)+\mathbf{z}_{1}\cdot d\mathbf{\widetilde{N}}(t),\\
   d\eta_{2}&=-\Big\{
   A(\alpha)^{\top}\eta_{2}+C(\alpha)^{\top}\zeta_{2}-C(\alpha)^{\top}P(\alpha)\sigma-P(\alpha)b-q
   +\widehat{\Theta}_{2}(\alpha)^{\top}\bar{\rho}^{2}\\
   &\quad +\widehat{\Theta}_{1}(\alpha)^{\top}\big[B_{1}(\alpha)^{\top}\eta_{2}+D_{1}(\alpha)^{\top}\zeta_{2}
   -D_{1}(\alpha)^{\top}P(\alpha)\sigma-\rho_{1}\big]
   \Big\}dt+\zeta_{1}dW(t)+\mathbf{z}_{2}\cdot d\mathbf{\widetilde{N}}(t).
    \end{aligned}
\end{equation}
  Adding the above two BSDEs, one can directly derive that
\begin{equation}\label{ZLQ-eta-1+2}
   \begin{aligned}
   d(\eta_{1}+\eta_{2})&=-\big\{\big[A(\alpha)+B(\alpha)\widehat{\Theta}(\alpha)\big]^{\top}(\eta_{1}+\eta_{2})+\big[C(\alpha)+D(\alpha)\widehat{\Theta}(\alpha)\big]^{\top}(\zeta_{1}+\zeta_{2})\big\}dt\\
   &\quad+(\zeta_{1}+\zeta_{2})dW(t)+(\mathbf{z}_{1}+\mathbf{z}_{2})\cdot d\mathbf{\widetilde{N}}(t).
   \end{aligned}
  \end{equation}
By \cite[Proposition $2.7$]{Wu-etal}, the above BSDE admits a unique $L^2$-stable adapted solution,  and we can verify that $(0,0,\mathbf{0})$ is the unique solution of BSDE \eqref{ZLQ-eta-1+2}. Therefore, we have
$$(\eta_{1},\zeta_{1},\mathbf{z}_{1})=-(\eta_{2},\zeta_{2},\mathbf{z}_{2})\triangleq (\eta,\zeta,\mathbf{z}).$$
Consequently, the condition \eqref{CAREs-ZLQ-eta-constraint-2}  is equivalent to \eqref{CAREs-ZLQ-eta-constraint} and \eqref{ZLQ-eta-1} can be rewritten as
\begin{equation*}
   \begin{aligned}
   d\eta&=-\Big\{
   A(\alpha)^{\top}\eta+C(\alpha)^{\top}\zeta+C(\alpha)^{\top}P(\alpha)\sigma+P(\alpha)b+q
   +\widehat{\Theta}(\alpha)^{\top}\mathcal{N}(P,\alpha)\widehat{\nu}\Big\}dt\\
   &\quad+\zeta dW(t)+\mathbf{z}\cdot d\mathbf{\widetilde{N}}(t)\\
   &=-\Big\{
   A(\alpha)^{\top}\eta+C(\alpha)^{\top}\zeta+C(\alpha)^{\top}P(\alpha)\sigma+P(\alpha)b+q
   +\mathcal{L}(P,\alpha)\mathcal{N}(P,\alpha)^{\dag}\widetilde{\rho}\Big\}dt\\
   &\quad+\zeta dW(t)+\mathbf{z}\cdot d\mathbf{\widetilde{N}}(t).
   \end{aligned}
  \end{equation*}
Plugging \eqref{ZLQ-eta-constraint} into the above equation, we can find $(\eta,\zeta,\mathbf{z})$ solves BSDE \eqref{ZLQ-eta}.

Finally, it follows from \eqref{GLQ-value-1} that the  equilibrium value function of Problem (M-ZLQ) satisfies
\begin{equation}\label{ZLQ-value-D}
\begin{aligned}
 V(x,i)&=V_{1}(x,i)=\big<P(i)x,x\big>+\mathbb{E}\Big\{2\big<\eta(0),x\big>+\int_{0}^{\infty}\Big[2\big<\eta,b\big>+2\big<\zeta,\sigma\big>+\big<P(\alpha)\sigma,\sigma\big>\\
  &\quad +2\big<B_2(\alpha)^{\top}\eta+D_2(\alpha)^{\top}\zeta+D_2(\alpha)^{\top}P(\alpha)\sigma+\rho_{2},\widehat{\nu}_{2}\big>+\big<\mathcal{N}_{22}(P,\alpha)\widehat{\nu}_{2},\widehat{\nu}_{2}\big>\\
  &\quad-\big<\mathcal{N}_{11}(P,\alpha)\widehat{\nu}_{1},\widehat{\nu}_{1}\big>\Big]dt\Big\}\\
  &=\big<P(i)x,x\big>+\mathbb{E}\Big\{2\big<\eta(0),x\big>+\int_{0}^{\infty}\Big[2\big<\eta,b\big>+2\big<\zeta,\sigma\big>+\big<P(\alpha)\sigma,\sigma\big>\\
  &\quad -2\big<\mathcal{N}_{21}(P,\alpha)\widehat{\nu}_{1}+\mathcal{N}_{22}(P,\alpha)\widehat{\nu}_{2},\widehat{\nu}_{2}\big>+\big<\mathcal{N}_{22}(P,\alpha)\widehat{\nu}_{2},\widehat{\nu}_{2}\big>-\big<\mathcal{N}_{11}(P,\alpha)\widehat{\nu}_{1},\widehat{\nu}_{1}\big>\Big]dt\Big\}\\
  &=\big<P(i)x,x\big>+\mathbb{E}\big\{2\big<\eta(0),x\big>+\int_{0}^{\infty}\Big[2\big<\eta,b\big>+2\big<\zeta,\sigma\big>+\big<P(\alpha)\sigma,\sigma\big>-\big<\mathcal{N}(P,\alpha)\widehat{\nu},\widehat{\nu}\big>\big]dt\big\}.
\end{aligned}
\end{equation}
Note that
  $$\big<\mathcal{N}(P,\alpha)\widehat{\nu},\widehat{\nu}\big>=\big<\mathcal{N}(P,\alpha)^{\dag}\mathcal{N}(P,\alpha)\widehat{\nu},\mathcal{N}(P,\alpha)\widehat{\nu}\big>=\big<\mathcal{N}(P,\alpha)^{\dag}\widetilde{\rho},\widetilde{\rho}\big>.$$
  The equation \eqref{ZLQ-value-D} implies \eqref{ZLQ-value}. To sum up, we complete the proof.
\end{proof}

Similar to Corollary \ref{coro-GLQ-0-closed}, we have the following result.
\begin{corollary}\label{coro-ZLQ-0-closed}
If $(\mathbf{\widehat{\Theta}_{1}},\widehat{\nu}_{1};\mathbf{\widehat{\Theta}_{2}},\widehat{\nu}_{2})$ is a closed-loop saddle point of Problem (M-ZLQ), then $(\mathbf{\widehat{\Theta}_{1}},0;\mathbf{\widehat{\Theta}_{2}},0)$ is a closed-loop saddle point of Problem (M-ZLQ)$^{0}$. In this case, the closed-loop equilibrium value function of  Problem (M-ZLQ)$^{0}$ is given by
  \begin{equation}\label{ZLQ-0-value}
  V^{0}(x,i)=\big<P(i)x,x\big>.
  \end{equation}
\end{corollary}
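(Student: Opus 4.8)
The plan is to derive this directly from Theorem~\ref{thm-ZLQ-closed}, mirroring exactly the way Corollary~\ref{coro-GLQ-0-closed} is obtained from Theorem~\ref{thm-GLQ-closed}. First I would invoke the necessity part of Theorem~\ref{thm-ZLQ-closed}: since $(\mathbf{\widehat{\Theta}_{1}},\widehat{\nu}_{1};\mathbf{\widehat{\Theta}_{2}},\widehat{\nu}_{2})$ is a closed-loop saddle point of Problem (M-ZLQ), we get that $\mathbf{\widehat{\Theta}}=(\mathbf{\widehat{\Theta}_{1}}^{\top},\mathbf{\widehat{\Theta}_{2}}^{\top})^{\top}\in\mathcal{H}[A,C;B,D]_{\alpha}$ and that the CAREs~\eqref{CAREs-ZLQ} admit a solution $\mathbf{P}\in\mathcal{D}(\mathbb{S}^{n})$ satisfying the constraint~\eqref{CAREs-ZLQ-constraint}. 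The crucial observation is that these are precisely conditions (i) and (ii) of Theorem~\ref{thm-ZLQ-closed}, and they involve only the coefficients $A,C,B,D,Q,S,R$; they are completely insensitive to the non-homogeneous data $b,\sigma,q,\rho_{1},\rho_{2}$. Hence, for Problem (M-ZLQ)$^{0}$ the same $\mathbf{P}$ and the same $\mathbf{\widehat{\Theta}}$ continue to satisfy conditions (i) and (ii) verbatim.

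Next I would check condition (iii) of Theorem~\ref{thm-ZLQ-closed} for the candidate 4-tuple $(\mathbf{\widehat{\Theta}_{1}},0;\mathbf{\widehat{\Theta}_{2}},0)$. Setting $b=\sigma=q=0$ and $\rho_{1}=\rho_{2}=0$, the BSDE~\eqref{ZLQ-eta} loses its free term and becomes a homogeneous linear BSDE on $[0,\infty)$ whose drift is the one associated with the $L^{2}$-stable system governed by $\mathbf{\widehat{\Theta}}$; by \cite[Proposition~2.7]{Wu-etal} it admits a \emph{unique} $L^{2}$-stable adapted solution, which is therefore $(\eta,\zeta,\mathbf{z})=(0,0,\mathbf{0})$. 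For this solution one has $\widetilde{\rho}\equiv 0$ in~\eqref{ZLQ-eta-constraint}, so the constraint~\eqref{CAREs-ZLQ-eta-constraint} collapses to $\mathcal{N}(P,\alpha_{t})\widehat{\nu}(t)=0$, which is satisfied by $\widehat{\nu}=(\widehat{\nu}_{1}^{\top},\widehat{\nu}_{2}^{\top})^{\top}=0$. Thus all three conditions of Theorem~\ref{thm-ZLQ-closed} hold for Problem (M-ZLQ)$^{0}$ with $(\mathbf{\widehat{\Theta}_{1}},0;\mathbf{\widehat{\Theta}_{2}},0)$, and its sufficiency part yields that this 4-tuple is a closed-loop saddle point of Problem (M-ZLQ)$^{0}$.

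For the value function, I would substitute $\eta=0$, $\zeta=0$, $b=0$, $\sigma=0$, $\widetilde{\rho}=0$ into the representation~\eqref{ZLQ-value}; every term except $\langle P(i)x,x\rangle$ vanishes, giving $V^{0}(x,i)=\langle P(i)x,x\rangle$. The only step that requires genuine (rather than purely bookkeeping) care is the uniqueness claim for the homogeneous BSDE~\eqref{ZLQ-eta} — equivalently, that no nonzero $L^{2}$-stable triple solves it — which is exactly where the stabilizing property of $\mathbf{\widehat{\Theta}}$ is used; everything else has already been carried out inside the proof of Theorem~\ref{thm-ZLQ-closed}.
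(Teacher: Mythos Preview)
Your proposal is correct and follows exactly the approach the paper intends: the paper proves this corollary by analogy with Corollary~\ref{coro-GLQ-0-closed}, i.e., by invoking the sufficiency part of Theorem~\ref{thm-ZLQ-closed} after observing that conditions (i)--(ii) are unaffected by the non-homogeneous data and that condition (iii) is trivially satisfied with $(\eta,\zeta,\mathbf{z})=(0,0,\mathbf{0})$ and $\widehat{\nu}=0$. Your write-up is in fact more detailed than the paper's one-line justification.
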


  Let $(\mathbf{\widehat{\Theta}},\widehat{\nu})$ be a closed-loop Nash equilibrium strategy of Problem (M-ZLQ), $\mathbf{P}$ be the solution to CAREs \eqref{CAREs-ZLQ} such that condition \eqref{CAREs-ZLQ-constraint} and $(\eta,\zeta,\mathbf{z})$ be the solution to BSDE \eqref{ZLQ-eta} such that constraint \eqref{CAREs-ZLQ-eta-constraint}. Then, by the basic properties of pseudo-inverse, one can easily verify that $\mathbf{P}$ is a static stabilizing solution of CAREs \eqref{ZLQ-CAREs} and $(\eta,\zeta,\mathbf{z})$ also solves BSDE \eqref{eta-ZLQ} with constraint
\eqref{ZLQ-eta-constraint}. Hence, we have the following result from Theorem \ref{thm-ZLQ-FBSDEs-CAREs}.

\begin{corollary}\label{coro-ZLQ-2}
Suppose that systems $[A,C;B_{1},D_{1}]_{\alpha}$ and $[A,C;B_{2},D_{2}]_{\alpha}$ are $L^{2}$-stablizable. If the following conditions hold:
\begin{description}
\item[(i)]The convexity-concavity condition \eqref{ZLQ-convexity-concavity} holds;
  \item[(ii)] The Problem (ZLQ) is closed-loop solvable with closed-loop saddle point $(\mathbf{\widehat{\Theta}},\widehat{\nu})$.
\end{description}
Then the Problem (M-ZLQ) is open-loop solvable and $(\mathbf{\widehat{\Theta}},\widehat{\nu})$ also is a closed-loop representation strategy.
\end{corollary}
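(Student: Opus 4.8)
The plan is to deduce everything from Theorem \ref{thm-ZLQ-FBSDEs-CAREs} by showing that the closed-loop data furnished by hypothesis (ii) automatically meets the stronger ``static stabilizing'' and range requirements of that theorem. First I would invoke the necessity direction of Theorem \ref{thm-ZLQ-closed}: since Problem (M-ZLQ) is closed-loop solvable with saddle point $(\mathbf{\widehat{\Theta}},\widehat{\nu})$, there exist $\mathbf{\widehat{\Theta}}\in\mathcal{H}[A,C;B,D]_{\alpha}$, a solution $\mathbf{P}\in\mathcal{D}(\mathbb{S}^{n})$ of the CAREs \eqref{CAREs-ZLQ} satisfying the constraint \eqref{CAREs-ZLQ-constraint}, and an adapted solution $(\eta,\zeta,\mathbf{z})$ of the BSDE \eqref{ZLQ-eta} satisfying \eqref{CAREs-ZLQ-eta-constraint}. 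The crucial structural remark is that the BSDE \eqref{ZLQ-eta} is literally the BSDE \eqref{eta-ZLQ}, so the only work left is to upgrade the associated algebraic conditions.

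Second, I would verify that $\mathbf{P}$ is a static stabilizing solution of \eqref{ZLQ-CAREs} in the sense of Definition \ref{def-stabilizing-solution-CAREs}. Since $P(i)$ and $R(i)$ are symmetric, $\mathcal{N}(P,i)$ is symmetric, hence $\mathcal{N}(P,i)\mathcal{N}(P,i)^{\dag}$ is the orthogonal projection onto $\mathcal{R}(\mathcal{N}(P,i))$ and $\mathcal{N}(P,i)^{\dag}\mathcal{N}(P,i)$ is the same projection. From \eqref{CAREs-ZLQ-constraint} we get $\mathcal{L}(P,i)^{\top}=-\mathcal{N}(P,i)\widehat{\Theta}(i)\in\mathcal{R}(\mathcal{N}(P,i))$, which gives $\mathcal{L}(P,i)[I-\mathcal{N}(P,i)\mathcal{N}(P,i)^{\dag}]=0$, i.e.\ the second equation of \eqref{ZLQ-CAREs}; the first equation of \eqref{ZLQ-CAREs} is exactly \eqref{CAREs-ZLQ}. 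For the stabilizing part I would take $\mathbf{\Pi}=\mathbf{\widehat{\Theta}}$: using $\mathcal{N}(P,i)^{\dag}\mathcal{N}(P,i)\widehat{\Theta}(i)=-\mathcal{N}(P,i)^{\dag}\mathcal{L}(P,i)^{\top}$, one computes $\mathcal{K}(\Pi(i))=-\mathcal{N}(P,i)^{\dag}\mathcal{L}(P,i)^{\top}+[I-\mathcal{N}(P,i)^{\dag}\mathcal{N}(P,i)]\widehat{\Theta}(i)=\widehat{\Theta}(i)$, so $\mathcal{K}(\mathbf{\Pi})=\mathbf{\widehat{\Theta}}\in\mathcal{H}[A,C;B,D]_{\alpha}$. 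In the same way, \eqref{CAREs-ZLQ-eta-constraint} reads $\widetilde{\rho}=-\mathcal{N}(P,\alpha)\widehat{\nu}\in\mathcal{R}(\mathcal{N}(P,\alpha))$, which is precisely the range condition \eqref{ZLQ-eta-constraint}.

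Third, with all hypotheses of Theorem \ref{thm-ZLQ-FBSDEs-CAREs} now in force (the convexity-concavity condition \eqref{ZLQ-convexity-concavity} is hypothesis (i), $\mathbf{P}$ is a static stabilizing solution of \eqref{ZLQ-CAREs}, and $(\eta,\zeta,\mathbf{z})$ solves \eqref{eta-ZLQ} with constraint \eqref{ZLQ-eta-constraint}), that theorem yields the open-loop solvability of Problem (M-ZLQ) with closed-loop representation strategy of the form \eqref{ZLQ-closed-loop}, for any admissible choice of the free parameters $\mathbf{\Pi}$ and $\nu$. Choosing $\mathbf{\Pi}=\mathbf{\widehat{\Theta}}$ as above and $\nu=\widehat{\nu}$, and using $\mathcal{N}(P,\alpha)^{\dag}\mathcal{N}(P,\alpha)\widehat{\nu}=-\mathcal{N}(P,\alpha)^{\dag}\widetilde{\rho}$ (again from \eqref{CAREs-ZLQ-eta-constraint}), the formulas \eqref{ZLQ-closed-loop} collapse to $\Theta^{*}(i)=\widehat{\Theta}(i)$ and $\nu^{*}=\widehat{\nu}$. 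Hence $(\mathbf{\widehat{\Theta}},\widehat{\nu})$ is itself a closed-loop representation strategy of Problem (M-ZLQ), which is the assertion.

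I expect the only (mild) obstacle to be the pseudo-inverse bookkeeping: one must carefully use that $\mathcal{N}(P,\alpha)$ is symmetric so that $\mathcal{N}\mathcal{N}^{\dag}$ and $\mathcal{N}^{\dag}\mathcal{N}$ are the same orthogonal projection onto $\mathcal{R}(\mathcal{N})$, and keep track of which range/null-space identities are invoked at each step; once those identities are in place, the rest is direct substitution into the already-established Theorems \ref{thm-ZLQ-closed} and \ref{thm-ZLQ-FBSDEs-CAREs}.
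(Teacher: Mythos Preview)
Your proposal is correct and follows essentially the same route as the paper: the paper's argument, stated in the paragraph immediately preceding the corollary, is precisely to extract $\mathbf{P}$, $(\eta,\zeta,\mathbf{z})$ and the constraints from the necessity part of Theorem~\ref{thm-ZLQ-closed}, verify via pseudo-inverse identities that $\mathbf{P}$ is a static stabilizing solution of \eqref{ZLQ-CAREs} and that \eqref{ZLQ-eta-constraint} holds, and then invoke Theorem~\ref{thm-ZLQ-FBSDEs-CAREs}. Your write-up is in fact more explicit than the paper's, which merely says ``by the basic properties of pseudo-inverse, one can easily verify''; in particular, your choice $\mathbf{\Pi}=\mathbf{\widehat{\Theta}}$, $\nu=\widehat{\nu}$ in \eqref{ZLQ-closed-loop} to recover exactly $(\mathbf{\widehat{\Theta}},\widehat{\nu})$ makes the final clause of the corollary transparent.
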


\section{Examples}\label{section-examples}
This section presents three concrete examples to illustrate the results in previous sections. The numerical algorithms used here are similar to those in \cite{Jianhui-Huang-2015,Li-Zhou-Rami-2003-ID-MLQ-IF}.
Without loss of generality, we suppose the state space of the Markov chain $\alpha$ is $\mathcal{S}:=\left\{1,2,3\right\}$ and the corresponding generator is given by
$$\Pi=\left[\begin{array}{ccc}
     \pi_{11}   &   \pi_{12}   &   \pi_{13}\\
    \pi_{21}   &   \pi_{22}   &   \pi_{23}\\
    \pi_{31}   &   \pi_{32}   &   \pi_{33}
\end{array}\right]
=\left[\begin{array}{ccc}
     -0.5   &   0.3   &   0.2\\
    0.2   &   -0.4   &   0.2\\
    0.3   &   0.2   &   -0.5
\end{array}\right].$$
In addition, the state process in this section satisfies:
\begin{equation}\label{state-exam}
  \left\{
 \begin{aligned}
   dX(t)&=\left[A\left(\alpha_{t}\right)X(t)+B_{1}\left(\alpha_{t}\right)u_{1}(t)+B_{2}\left(\alpha_{t}\right)u_{2}(t)\right]dt\\
   &\quad+\left[C\left(\alpha_{t}\right)X(t)+D_{1}\left(\alpha_{t}\right)u_{1}(t)+D_{2}\left(\alpha_{t}\right)u_{2}(t)\right]dt,\\
   X(0)&=x,\quad \alpha_{0}=i.
   \end{aligned}
  \right.
\end{equation}
For the first two examples, the coefficients of the state equation are set to:
\begin{align*}
&A(1)=\left(\begin{matrix}-3&1\\0&-5 \end{matrix}\right),\,
A(2)=\left(\begin{matrix}-4&0\\0&-3 \end{matrix}\right),\,
A(3)=\left(\begin{matrix}-5&1\\-1&-4 \end{matrix}\right),\,
C(1)=\left(\begin{matrix}1&1\\0&-1 \end{matrix}\right),\\
&C(2)=\left(\begin{matrix}1&-1\\1&0\end{matrix}\right),\,
C(3)=\left(\begin{matrix}0&1\\-1&1 \end{matrix}\right),\,
B_{1}(1)=\left(\begin{matrix}-2&1\\3&-5 \end{matrix}\right),\,
B_{1}(2)=\left(\begin{matrix}-1&0\\3&-3 \end{matrix}\right),\\
&B_{1}(3)=\left(\begin{matrix}-2&1\\0&-4 \end{matrix}\right),\,
B_{2}(1)=\left(\begin{matrix}1&1\\2&-5 \end{matrix}\right),\,
B_{2}(2)=\left(\begin{matrix}3&0\\1&-3 \end{matrix}\right),\,
B_{2}(3)=\left(\begin{matrix}1&1\\-1&-4 \end{matrix}\right),\\
&D_{1}(1)=\left(\begin{matrix}2&1\\0&-1 \end{matrix}\right),\,
D_{1}(2)=\left(\begin{matrix}-1&0\\2&-3 \end{matrix}\right),\,
D_{1}(3)=\left(\begin{matrix}-4&0\\-1&-3 \end{matrix}\right),\,
D_{2}(1)=\left(\begin{matrix}-3&2\\1&-5 \end{matrix}\right),\\
&D_{2}(2)=\left(\begin{matrix}-1&1\\1&-3 \end{matrix}\right),\,
D_{2}(3)=\left(\begin{matrix}-3&-1\\-1&0 \end{matrix}\right).
\end{align*}
However, for the last example, except for the coefficients $A(i),\,C(i)$, $i\in\mathcal{S}$ remaining unchanged, the other coefficients are changed to:
\begin{equation}\label{state-coefficients-5-3}
\begin{aligned}
&B_{1}(1)=\left(\begin{matrix}-1&0\\ 1& 1 \end{matrix}\right),\,
B_{1}(2)=\left(\begin{matrix}-1& 1\\ 1& 0 \end{matrix}\right),\,
B_{1}(3)=\left(\begin{matrix}0& 1\\ 1& 2 \end{matrix}\right),\,
B_{2}(1)=\left(\begin{matrix}1& 1\\ 0&-1 \end{matrix}\right),\\
&B_{2}(2)=\left(\begin{matrix}2& 0\\ 1& -1 \end{matrix}\right),\,
B_{2}(3)=\left(\begin{matrix}0&1\\2&-1 \end{matrix}\right),\,
D_{1}(1)=\left(\begin{matrix}1&0\\ 1&-1 \end{matrix}\right),\,
D_{1}(2)=\left(\begin{matrix}-1&0\\1&1 \end{matrix}\right),\\
&D_{1}(3)=\left(\begin{matrix}-1& 0\\ -1& -1 \end{matrix}\right),\,
D_{2}(1)=\left(\begin{matrix}1& 0\\1& 1 \end{matrix}\right),\,
D_{2}(2)=\left(\begin{matrix}0& 1\\1& 0 \end{matrix}\right),\,
D_{2}(3)=\left(\begin{matrix}1& -1\\0&1 \end{matrix}\right).
\end{aligned}
\end{equation}
In the above, we can easily verify that $A(i)+A(i)^{\top}+C(i)^{\top}C(i)<0$ for any $i\in\mathcal{S}$. It follows from Remark $2.4$ in \cite{Wu-etal} that system $[A,C]_{\alpha}$ is $L^{2}$-stable.

The following example considers the open-loop solvable and closed-loop solvable of Problem (M-GLQ)$^{0}$. Both closed-loop representations of open-loop Nash equilibrium point and closed-loop Nash equilibrium strategy are obtained. It also shows that the solution to CAREs \eqref{decouple-P-3}-\eqref{decouple-P-constraint-3} is asymmetric while the solution to CAREs \eqref{CAREs-GLQ-1}-\eqref{CAREs-GLQ-constraint} is symmetric from the numerical point of view.

\begin{example}\rm
For $k=1,2$, consider the following cost functionals of Problem (M-GLQ)$^{0}$:
 \begin{equation}\label{cost-GLQ-exam}
 \begin{aligned}
     J_{k}\left(x,i;u_{1},u_{2}\right)
     & = \mathbb{E}\int_{0}^{\infty}
     \left<
     \left(
     \begin{matrix}
     Q^{k}(\alpha_{t})&S_{1}^{k}(\alpha_{t})^{\top}&S_{2}^{k}(\alpha_{t})^{\top}\\
     S_{1}^{k}(\alpha_{t})&R_{11}^{k}(\alpha_{t})&R_{12}^{k}(\alpha_{t})\\
     S_{2}^{k}(\alpha_{t})&R_{21}^{k}(\alpha_{t})&R_{22}^{k}(\alpha_{t})
     \end{matrix}
     \right)
     \left(
     \begin{matrix}
     X(t)\\
     u_{1}(t)\\
     u_{2}(t)
     \end{matrix}
     \right),
     \left(
     \begin{matrix}
     X(t)\\
     u_{1}(t)\\
     u_{2}(t)
     \end{matrix}
     \right)
     \right>dt,
   \end{aligned}
 \end{equation}
 where
\begin{align*}
    &\left(
    \begin{matrix}
    Q^{1}(1)        &    S_{1}^{1}(1)^{\top}    &      S_{2}^{1}(1)^{\top}\\
    S_{1}^{1}(1)    &    R_{11}^{1}(1)          &      R_{12}^{1}(1)\\
    S_{2}^{1}(1)    &    R_{21}^{1}(1)          &      R_{22}^{1}(1)
    \end{matrix}
    \right)=
    \left(
    \begin{matrix}
    1.67  &  -0.05  &  -0.02  &  -0.04   &  -1.25  &   0.13\\
   -0.05  &   1.63  &   0.01  &   0.06   &  -0.11  &   0.03\\
   -0.02  &   0.01  &   1.61  &  -0.13   &   0.01  &  -1.03\\
   -0.04  &   0.06  &  -0.13  &   1.69   &  -1.04  &   0.18\\
   -1.25  &  -0.11  &   0.01  &  -1.04   &  -0.13  &   0.04\\
    0.13  &   0.03  &  -1.03  &   0.18   &   0.04  &  -0.15
    \end{matrix}
    \right),\\
    &\left(
    \begin{matrix}
    Q^{1}(2)        &    S_{1}^{1}(2)^{\top}      &      S_{2}^{1}(2)^{\top}\\
    S_{1}^{1}(2)    &    R_{11}^{1}(2)            &      R_{12}^{1}(2)\\
    S_{2}^{1}(2)    &    R_{21}^{1}(2)            &      R_{22}^{1}(2)
    \end{matrix}
    \right)=
    \left(
    \begin{matrix}
    1.70  &   0.08  &   0.02  &   0.00   &  -1.15  &   0.13\\
    0.08  &   1.66  &   0.07  &   0.01   &  -0.11  &   0.06\\
    0.02  &   0.07  &   1.59  &  -0.10   &   0.07  &  -1.03\\
    0.00  &   0.01  &  -0.10  &   1.65   &  -1.04  &   0.08\\
   -1.15  &  -0.11  &   0.07  &  -1.04   &  -0.19  &   0.02\\
    0.13  &   0.06  &  -1.03  &   0.08   &   0.02  &  -0.13
    \end{matrix}
    \right),\\
    &\left(
    \begin{matrix}
    Q^{1}(3)        &    S_{1}^{1}(3)^{\top}    &    S_{2}^{1}(3)^{\top}\\
    S_{1}^{1}(3)    &    R_{11}^{1}(3)          &    R_{12}^{1}(3)\\
    S_{2}^{1}(3)    &    R_{21}^{1}(3)          &    R_{22}^{1}(3)
    \end{matrix}
    \right)=
    \left(
    \begin{matrix}
    1.59  &  -0.04  &  -0.03  &   0.06   &  -1.15  &   0.03\\
   -0.04  &   1.77  &  -0.01  &   0.06   &   0.11  &   0.06\\
   -0.03  &  -0.01  &   1.57  &   0.05   &   0.07  &  -0.03\\
    0.06  &   0.06  &   0.05  &   1.67   &  -0.04  &   0.08\\
   -1.15  &   0.11  &   0.07  &  -0.04   &  -0.11  &  -0.02\\
    0.03  &   0.06  &  -0.03  &   0.08   &  -0.02  &  -0.15
    \end{matrix}
    \right),\\
      &\left(
    \begin{matrix}
    Q^{2}(1)        &    S_{1}^{2}(1)^{\top}    &      S_{2}^{2}(1)^{\top}\\
    S_{1}^{2}(1)    &    R_{11}^{2}(1)          &      R_{12}^{2}(1)\\
    S_{2}^{2}(1)    &    R_{21}^{2}(1)          &      R_{22}^{2}(1)
    \end{matrix}
    \right)=
    \left(
    \begin{matrix}
     1.69  &   -0.02  &   -0.02  &    0.00   &    0.09  &   -0.04\\
    -0.02  &    1.52  &   -0.07  &   -0.01   &   -0.05  &   -0.03\\
    -0.02  &   -0.07  &   -1.59  &    0.10   &   -0.07  &    1.03\\
     0.00  &   -0.01  &    0.10  &   -1.65   &    1.04  &   -0.08\\
     0.09  &   -0.05  &   -0.07  &    1.04   &    1.68  &   -0.03\\
    -0.04  &   -0.03  &    1.03  &   -0.08   &   -0.03  &    1.71
    \end{matrix}
    \right),\\
    &\left(
    \begin{matrix}
    Q^{2}(2)        &    S_{1}^{2}(2)^{\top}      &      S_{2}^{2}(2)^{\top}\\
    S_{1}^{2}(2)    &    R_{11}^{2}(2)            &      R_{12}^{2}(2)\\
    S_{2}^{2}(2)    &    R_{21}^{2}(2)            &      R_{22}^{2}(2)
    \end{matrix}
    \right)=
    \left(
    \begin{matrix}
     1.67  &    0.05  &    0.02  &    0.04   &    0.14  &   -0.03\\
     0.05  &    1.64  &   -0.01  &   -0.06   &   -0.01  &    0.05\\
     0.02  &   -0.01  &   -1.61  &    0.13   &   -0.01  &    1.03\\
     0.04  &   -0.06  &    0.13  &   -1.69   &    1.04  &   -0.18\\
     0.14  &   -0.01  &   -0.01  &    1.04   &    1.65  &   -0.01\\
    -0.03  &    0.05  &    1.03  &   -0.18   &   -0.01  &    1.65
    \end{matrix}
    \right),\\
    &\left(
    \begin{matrix}
    Q^{2}(3)        &    S_{1}^{2}(3)^{\top}    &    S_{2}^{2}(3)^{\top}\\
    S_{1}^{2}(3)    &    R_{11}^{2}(3)          &    R_{12}^{2}(3)\\
    S_{2}^{2}(3)    &    R_{21}^{2}(3)          &    R_{22}^{2}(3)
    \end{matrix}
    \right)=
    \left(
    \begin{matrix}
   1.59    &    0.03    &    0.03    &   -0.06     &   -0.08    &   -0.07\\
   0.03    &    1.61    &    0.01    &   -0.06     &    0.05    &   -0.01\\
   0.03    &    0.01    &   -1.57    &   -0.05     &   -0.07    &    0.03\\
  -0.06    &   -0.06    &   -0.05    &   -1.67     &    0.04    &   -0.08\\
  -0.08    &    0.05    &   -0.07    &    0.04     &    1.75    &   -0.05\\
  -0.07    &   -0.01    &    0.03    &   -0.08     &   -0.05    &    1.62
    \end{matrix}
    \right).
\end{align*}
We point out that the above-given coefficients are required to satisfy condition \eqref{GLQ-convexity-condition}. Hence, the above cost functionals satisfy the convexity condition \eqref{GLQ-convexity}.
\end{example}

To obtain the closed-loop representation of the open-loop Nash equilibrium point, one must solve the constrained CAREs
\eqref{decouple-P-3}-\eqref{decouple-P-constraint-3}. Using the SDP technique, we obtain the numerical approximation solution as follows:
\begin{align*}
 &P_{1}(1)\!=\!\left[\begin{matrix}
 0.299917 &  0.062075\\
0.053329 &  0.173778
 \end{matrix}
 \right],\,
 P_{1}(2)\!=\!\left[\begin{matrix}
 0.284044 & -0.023310\\
-0.018440  & 0.246700
 \end{matrix}
 \right],\,
 P_{1}(3)\!=\!\left[\begin{matrix}
 0.169793  & 0.003297\\
0.006381  & 0.169369
 \end{matrix}
 \right],\\
 &P_{2}(1)\!=\!\left[\begin{matrix}
 0.317295 &  0.064732\\
0.060875 &  0.166318
 \end{matrix}
 \right],\,
 P_{2}(2)\!=\!\left[\begin{matrix}
 0.214041 & -0.027260\\
-0.018476 &  0.239175
 \end{matrix}
 \right],\,
 P_{2}(3)\!=\!\left[\begin{matrix}
 0.161026  & 0.010229\\
0.016542 &  0.151165
 \end{matrix}
 \right],
\end{align*}
One can verify that the Frobenius norm error of the above numerical solutions can be precise to $10^{-6}$. Substituting the numerical solutions into the following equation
$$
\Sigma(\mathbb{P},i)=\left(\begin{matrix}
R_{11}^{1}(i)+D_{1}(i)^{\top}P_{1}(i)D_{1}(i) & R_{12}^{1}(i)+D_{1}(i)^{\top}P_{1}(i)D_{2}(i)\\
R_{21}^{2}(i)+D_{2}(i)^{\top}P_{2}(i)D_{1}(i) & R_{22}^{2}(i)+D_{2}(i)^{\top}P_{2}(i)D_{2}(i)
\end{matrix}\right),\quad i\in\mathcal{S},
$$
we can find that, for any $i\in\mathcal{S}$, the $\Sigma(\mathbb{P},i)$ is invertible. Solving $\mathbf{\Theta^{*}}$ from \eqref{decouple-P-constraint-3}, we can obtain
\begin{align*}
\mathbf{\Theta^{*}}&=\left(\Theta^{*}(1),\Theta^{*}(2),\Theta^{*}(3)\right)\\
&=\left(
\left[\begin{matrix}
  -0.005455 \!&\! -0.357973\\
  -0.034157 \!&\!  0.209280\\
   0.073937 \!&\! -0.035403\\
  -0.022786 \!&\!  0.018593
\end{matrix}\right]\!,\!
\left[\begin{matrix}
   0.217417 \!&\! -0.204680\\
   0.032269 \!&\! -0.004510\\
  -0.440579 \!&\! -0.012847\\
   0.021868 \!&\!  0.188713
\end{matrix}\right]\!,\!
\left[\begin{matrix}
   0.135889 \!&\!  0.051144\\
  -0.230434 \!&\!  0.320481\\
  -0.109368 \!&\!  0.086127\\
  -0.051071 \!&\!  0.403312
\end{matrix}\right]\right).
\end{align*}
It is easy to verify that
\begin{equation}\label{exam-AC}
A(i)+B(i)\Theta^{*}(i)+(A(i)+B(i)\Theta^{*}(i))^{\top}+(C(i)+D(i)\Theta^{*}(i))^{\top}(C(i)+D(i)\Theta^{*}(i))<0, \quad i\in\mathcal{S}.
\end{equation}
Hence, we have $\mathbf{\Theta^{*}}\in\mathcal{H}[A,C;B,D]_{\alpha}$.
It follows from Corollary \ref{coro-GLQ-open-solvability-0} that the above example is open-loop solvable, and any open-loop Nash equilibrium point admits the following closed-loop representation:
$$u^{*}(\cdot;x,i)=(u_{1}^{*}(\cdot;x,i)^{\top},u_{2}^{*}(\cdot;x,i)^{\top})^{\top}
=\Theta^{*}(\alpha)X^{0}(\cdot;x,i,\mathbf{\Theta^{*}},0).$$

Next, we consider the closed-loop equilibrium point of this example. To this end, we need to solve the constrained CAREs \eqref{CAREs-GLQ-1}-\eqref{CAREs-GLQ-constraint}, or equivalently, CAREs\eqref{CAREs-GLQ-1-2}-\eqref{CAREs-GLQ-constraint-2}. Using the SDP technique again, we obtain the numerical approximation solution as follows:
\begin{align*}
 &P_{1}(1)\!=\!\left[\begin{matrix}
 0.284341 &  0.053644\\
 0.053644 &  0.177155
 \end{matrix}
 \right],\,
 P_{1}(2)\!=\!\left[\begin{matrix}
  0.299351 & -0.021964\\
 -0.021964 &  0.248061
 \end{matrix}
 \right],\,
 P_{1}(3)\!=\!\left[\begin{matrix}
0.176490 &  0.005881\\
0.005881 &  0.156280
 \end{matrix}
 \right],\\
 &P_{2}(1)\!=\!\left[\begin{matrix}
  0.315689 &  0.065654\\
  0.065654 &  0.119917
 \end{matrix}
 \right],\,
 P_{2}(2)\!=\!\left[\begin{matrix}
  0.199450 & -0.004419\\
 -0.004419 &  0.214877
 \end{matrix}
 \right],\,
 P_{2}(3)\!=\!\left[\begin{matrix}
 0.146503 &  0.026456\\
 0.026456 &  0.127204
 \end{matrix}
 \right],
\end{align*}
We claim that the Frobenius norm error of the above numerical solutions also can be precise to $10^{-6}$. Substituting the above numerical solutions into \eqref{CAREs-GLQ-constraint}, we obtain
\begin{align*}
    \mathbf{\widehat{\Theta}}&=\left(
    \left[\begin{matrix}\widehat{\Theta}_{1}(1)\\\widehat{\Theta}_{2}(1)\end{matrix}\right],
    \left[\begin{matrix}\widehat{\Theta}_{1}(2)\\\widehat{\Theta}_{2}(2)\end{matrix}\right],
    \left[\begin{matrix}\widehat{\Theta}_{1}(3)\\\widehat{\Theta}_{2}(3)\end{matrix}\right]
    \right)\\
    &=\left(
    \left[\begin{matrix}
  -0.013230 & -0.358866\\
  -0.021533 &  0.215132\\
   0.068180 & -0.023482\\
  -0.025340 &  0.042498
\end{matrix}\right],
\left[\begin{matrix}
   0.246859 & -0.212545\\
   0.037244 & -0.018403\\
  -0.429070 & -0.045493\\
   0.034625 &  0.183211
\end{matrix}\right],
\left[\begin{matrix}
    0.130566 &  0.073648\\
   -0.229734 &  0.304168\\
   -0.103527 &  0.070543\\
   -0.021569 &  0.333343
\end{matrix}\right]
    \right).
\end{align*}
We can also verify that the above solution satisfies condition \eqref{exam-AC}, which implies $\mathbf{\widehat{\Theta}}\in\mathcal{H}[A,C;B,D]_{\alpha}$. It follows from Corollary \ref{coro-GLQ-0-closed} that $(\mathbf{\widehat{\Theta}},0)$ is the closed-loop Nash equilibrium strategy of this example.

The next example studies the closed-loop solvable of Problem (M-ZLQ)$^{0}$.
\begin{example}\label{example-2}\rm
Consider the following cost functionals of Problem (M-ZLQ)$^{0}$:
 \begin{equation}\label{cost-ZLQ-exam}
     J\left(x,i;u_{1},u_{2}\right) \!=\! \mathbb{E}\int_{0}^{\infty}\!\left[\left<Q(\alpha_{t})X(t),X(t)\right>\!+\!\left<R_{11}(\alpha_{t})u_{1}(t),u_{1}(t)\right>\!+\!\left<R_{22}(\alpha_{t})u_{2}(t),u_{2}(t)\right>\right]dt,
 \end{equation}
 where
 \begin{align*}
&Q(1)=\left(\begin{matrix}1.11&0.11\\ 0.11& 1.02 \end{matrix}\right),\,
Q(2)=\left(\begin{matrix}1.01& 0.13\\ 0.13& -1.12 \end{matrix}\right),\,
Q(3)=\left(\begin{matrix}-1.01& 0.21\\ 0.21& -1.02\end{matrix}\right),\\
&R_{11}(1)=\left(\begin{matrix}3.11& 1.03\\ 1.03&2.19 \end{matrix}\right),\,
R_{11}(2)=\left(\begin{matrix}5.37& -0.48\\ -0.48& 5.63 \end{matrix}\right),\,
R_{11}(3)=\left(\begin{matrix}5.11&0.23\\0.23 & 6.19 \end{matrix}\right),\\
&R_{22}(1)=\left(\begin{matrix}-6.82&-0.34\\ -0.34&-5.88 \end{matrix}\right),\,
R_{22}(2)=\left(\begin{matrix}-6.01&-0.14\\-0.14&-5.88 \end{matrix}\right),\,
R_{22}(3)=\left(\begin{matrix}-3.82&-1.04\\-1.04&-2.88 \end{matrix}\right).
\end{align*}
\end{example}
Solving constrained CAREs \eqref{CAREs-ZLQ}-\eqref{CAREs-ZLQ-constraint}, we obtain
\begin{align*}
 &P(1)\!=\!\left[\begin{matrix}
 0.201466   &   0.054055\\
 0.054055   &   0.102922
 \end{matrix}
 \right],\,
 P(2)\!=\!\left[\begin{matrix}
   0.091779   &    -0.005053\\
  -0.005053   &   -0.206651
 \end{matrix}
 \right],\,
 P(3)\!=\!\left[\begin{matrix}
  -0.156160   &     0.045988\\
   0.045988    &   -0.166436
 \end{matrix}
 \right].
\end{align*}
The accuracy of the above numerical solution can still reach $10^{-6}$. Plugging the above solutions into \eqref{CAREs-ZLQ-constraint}, we have
\begin{align*}
    \mathbf{\widehat{\Theta}}&=\left(
    \left[\begin{matrix}\widehat{\Theta}_{1}(1)\\\widehat{\Theta}_{2}(1)\end{matrix}\right],
    \left[\begin{matrix}\widehat{\Theta}_{1}(2)\\\widehat{\Theta}_{2}(2)\end{matrix}\right],
    \left[\begin{matrix}\widehat{\Theta}_{1}(3)\\\widehat{\Theta}_{2}(3)\end{matrix}\right]
    \right)\\
    &=\left(
    \left[\begin{matrix}
  -0.047278 & -0.209312\\
  -0.020200 &  0.209932\\
  -0.039428 &-0.030322\\
   0.023878  & 0.047724
\end{matrix}\right],
\left[\begin{matrix}
   0.116822 &  0.101032\\
  -0.123268 & -0.121514\\
  -0.014035 & -0.030890\\
   0.143139  & 0.110170
\end{matrix}\right],
\left[\begin{matrix}
    -0.139384  & -0.045542\\
     0.189717    & -0.231162\\
     0.018796    &   0.083455\\
    -0.070742   &   0.228681
\end{matrix}\right]
    \right),
\end{align*}
which can be verified to satisfy the condition \eqref{exam-AC}. According to Theorem \ref{thm-ZLQ-closed},  $(\mathbf{\widehat{\Theta}},0)$ is a closed-loop saddle point of this example.

In the above example, the minimum eigenvalue $m_{Q}$ and maximum eigenvalue $M_{Q}$ of process $Q(\alpha)$ obviously satisfy $m_{Q}<0,\,M_{Q}>0$. However, the condition (ii) in Proposition \ref{prop-convex-concave} does not hold. Hence, we can not demonstrate that the above example is open-loop solvable and construct a closed-loop representation strategy. As we can see in the next example, whose coefficients completely satisfied the assumptions proposed in Proposition \ref{prop-convex-concave}. Hence, we can further investigate its open-loop solvability and closed-loop solvability.

\begin{example}\label{example-3}\rm
Still considering the Problem (M-ZLQ)$^{0}$, whose performance functional is consistent with \eqref{cost-ZLQ-exam}. Compared with Example \ref{example-2}, we just update the value of $B_{1}(i),\, B_{2}(i),\, D_{1}(i)$, $D_{2}(i)$, $i\in\mathcal{S}$, to \eqref{state-coefficients-5-3} and modify the value of $R_{11}(1)$ and  $R_{22}(3)$ to
\[
R_{11}(1)=\left(\begin{matrix}4.11& 1.03\\ 1.03&6.19 \end{matrix}\right),\quad R_{22}(3)=\left(\begin{matrix}-5.82&-1.04\\-1.04&-5.88 \end{matrix}\right).
\]
\end{example}
Let $\epsilon_{1}=\epsilon_{2}=1$. We can verify that all assumptions proposed in Proposition \ref{prop-convex-concave} are satisfied. Hence, the performance functional of this example satisfies the condition \eqref{ZLQ-convexity-concavity}. To obtain the closed-loop representation strategy of open-loop saddle point, we need to solve CAREs \eqref{ZLQ-CAREs} and obtain the numerical solutions within the error level $10^{-6}$ as follows
\begin{align*}
 &P(1)\!=\!\left[\begin{matrix}
 0.208881 &  0.056594\\
0.056594 &  0.119522
 \end{matrix}
 \right],\,
 P(2)\!=\!\left[\begin{matrix}
   0.097670 & 0.000771\\
   0.000771 & -0.194399
 \end{matrix}
 \right],\,
 P(3)\!=\!\left[\begin{matrix}
   -0.135584 &  0.036854\\
   0.036854 & -0.160611
 \end{matrix}
 \right].
\end{align*}
Substituting the above numerical solutions into \eqref{ZLQ-closed-loop}, we have
\begin{align*}
\mathbf{\Theta^{*}}&=\left(\Theta^{*}(1),\Theta^{*}(2),\Theta^{*}(3)\right)\\
&=\left(
\left[\begin{matrix}
  -0.034391 \!&\!  -0.030087\\
   0.007260 \!&\! -0.024662\\
   0.070939 \!&\!  0.022068\\
   0.033041 \!&\! -0.022884
\end{matrix}\right]\!,\!
\left[\begin{matrix}
    0.077430 \!&\!  0.017903\\
   0.027153 \!&\!  0.001070\\
  -0.003238 \!&\! -0.032191\\
   0.015675 \!&\!  0.017204
\end{matrix}\right]\!,\!
\left[\begin{matrix}
   0.016852 \!&\! -0.011859\\
   0.036492 \!&\!  0.025103\\
   0.005594 \!&\! -0.076541\\
   0.004448 \!&\!  0.038961
\end{matrix}\right]\right),
\end{align*}
which also satisfies the condition \eqref{exam-AC}. In addition, one can verify that $\mathcal{N}(P,i)$ is invertible, which implies that
$$\nu^{*}=\left[I-\mathcal{N}(P,\alpha)^{\dag}\mathcal{N}(P,\alpha)\right]\nu=0.$$
It follows from Corollary \ref{coro-ZLQ-1} that $(\mathbf{\Theta^{*}},0)$ is the closed-loop representation strategy of this example.

On the other hand, we point that $\mathbf{P}=[P(1),P(2),P(3)]$ satisfies the following condition:
$$\mathcal{N}_{11}(P,i)\geq 0,\quad \mathcal{N}_{22}(P,i)\leq 0,\quad \forall i\in\mathcal{S}.$$
Hence, $\mathbf{P}$ also solves the constrained CAREs \eqref{CAREs-ZLQ}-\eqref{CAREs-ZLQ-constraint} and
the closed-loop representation strategy $(\mathbf{\Theta^{*}},0)$ also is the closed-loop saddle point.

\begin{thebibliography}{10}

\bibitem{Rami-Zhou-2000-LMI-RE-IDLQIF}
{\sc M.~Ait~Rami and X.~Y. Zhou}, {\em Linear matrix inequalities, {R}iccati
  equations, and indefinite stochastic linear quadratic controls}, {IEEE}
  Transactions on Automatic Control, 45 (2000).

\bibitem{Rami-Zhou-Moore-2000-ID-LQ-IF}
{\sc M.~Ait~Rami, X.~Y. Zhou, and J.~Moore}, {\em Well-posedness and
  attainability of indefinite stochastic linear quadratic control in infinite
  time horizon}, Systems \& Control Letters, 41 (2000), pp.~123--133.

\bibitem{Basar-1999-dynamic}
{\sc T.~Ba{\c{s}}ar and G.~J. Olsder}, {\em Dynamic {N}oncooperative {G}ame
  {T}heory}, SIAM, Philadelphia, 1999.

\bibitem{Boyd-Ghaoui-Feron-Balakrishnan-1994-LMI}
{\sc S.~Boyd, L.~El~Ghaoui, E.~Feron, and V.~Balakrishnan}, {\em Linear matrix
  inequalities in system and control theory}, SIAM, Philadelphia, PA, 1994.

\bibitem{Hamadene-1998-backward}
{\sc S.~Hamad\'ene}, {\em Backward-forward {SDE’s} and stochastic
  differential games}, Stochastic processes and their applications, 77 (1998),
  pp.~1--15.

\bibitem{Hamadene-1999-nonzero}
{\sc S.~Hamad\'ene}, {\em Nonzero sum linear-quadratic stochastic differential
  games and backward-forward equations}, Stochastic Analysis and Applications,
  17 (1999), pp.~117--130.

\bibitem{Jianhui-Huang-2015}
{\sc J.~Huang, X.~Li, and J.~Yong}, {\em A linear-quadratic optimal control
  problem for mean-field stochastic differential equations in infinite
  horizon}, Mathematical Control and Related Fields, 5 (2015), pp.~97--139.

\bibitem{Isaacs-1965-differential}
{\sc R.~Isaacs}, {\em Differential {G}ames: {A} {M}athematical {T}heory with
  {A}pplications to warfare and pursuit, control and optimization}, Wiley,
  1965.

\bibitem{Ji-Chizeck-1990-D-MLQ-I/F}
{\sc Y.~Ji and H.~J. Chizeck}, {\em Controllability, stabilizability, and
  continuous-time {M}arkovian jump linear quadratic control}, {IEEE}
  Transactions on Automatic Control, 35 (1990), pp.~777--788.

\bibitem{Ji-Chizeck-1991-D-MLQG-F}
{\sc Y.~Ji and H.~J. Chizeck}, {\em Jump linear quadratic {G}aussian control in
  continuous time}, in 1991 American Control Conference, 1991, pp.~2676--2681.

\bibitem{Krasovskii-1961-analytical}
{\sc N.~Krasovskii}, {\em Analytical design of controllers in systems with
  random attributes}, Automation and Remote Control, 22 (1961).

\bibitem{Li_linear-quadratic_2021}
{\sc M.~Li and Z.~Wu}, {\em Linear-quadratic non-zero sum differential game for
  mean-field stochastic systems with asymmetric information}, Journal of
  Mathematical Analysis and Applications, 504 (2021), p.~125315.

\bibitem{Li-2015-Recursive}
{\sc N.~Li and Z.~Yu}, {\em Recursive stochastic linear-quadratic optimal
  control and nonzero-sum differential game problems with random jumps},
  Advances in Difference Equations, 2015 (2015).

\bibitem{Li-Shi-Yong-2021-ID-MFLQ-IF}
{\sc X.~Li, J.~Shi, and J.~Yong}, {\em Mean-field linear-quadratic stochastic
  differential games in an infinite horizon}, ESAIM: Control, Optimisation and
  Calculus of Variations, 27 (2021), p.~81.

\bibitem{Li-Zhou-Rami-2003-ID-MLQ-IF}
{\sc X.~Li, X.~Y. Zhou, and M.~Ait~Rami}, {\em Indefinite stochastic linear
  quadratic control with {M}arkovian jumps in infinite time horizon}, Journal
  of Global Optimization, 27 (2003), pp.~149--175.

\bibitem{Nash-1951-non}
{\sc J.~Nash}, {\em Non-cooperative games}, Annals of mathematics, 54 (1951),
  pp.~286--295.

\bibitem{Penrose.1955}
{\sc R.~Penrose}, {\em A generalized inverse for matrices}, Mathematical
  Proceedings of the Cambridge Philosophical Society, 51 (1955), pp.~406--413.

\bibitem{Sun.J.R.2021_ZSILQ}
{\sc J.~Sun, H.~Wang, and J.~Wen}, {\em Zero-sum {S}tackelberg stochastic
  linear-quadratic differential games}, SIAM Journal on Control and
  Optimization, 61 (2023), pp.~252--284.

\bibitem{Sun_linear_2014}
{\sc J.~Sun and J.~Yong}, {\em Linear quadratic stochastic differential games:
  {Open}-loop and closed-loop saddle points}, SIAM Journal on Control and
  Optimization, 52 (2014), pp.~4082--4121.

\bibitem{Sun.JR_2016_IZSLQI}
{\sc J.~Sun, J.~Yong, and S.~Zhang}, {\em Linear quadratic stochastic
  two-person zero-sum differential games in an infinite horizon}, ESAIM:
  Control, Optimisation and Calculus of Variations, 22 (2016), pp.~743--769.

\bibitem{sun_risk-sensitive_2018}
{\sc Z.~Sun, I.~Kemajou-Brown, and O.~Menoukeu-Pamen}, {\em A risk-sensitive
  maximum principle for a {M}arkov regime-switching jump-diffusion system and
  applications}, ESAIM: Control, Optimisation and Calculus of Variations, 24
  (2018), pp.~985--1013.

\bibitem{Vandenberghe-Boyd-1996-SDP}
{\sc L.~Vandenberghe and S.~Boyd}, {\em Semidefinite programming}, SIAM Review,
  38 (1996), pp.~49--95.

\bibitem{Von-1928-theorie}
{\sc J.~von Neumann}, {\em Zur theorie der gesellschaftsspiele}, Mathematische
  annalen, 100 (1928), pp.~295--320.

\bibitem{Von-1947-theory}
{\sc J.~von Neumann and O.~Morgenstern}, {\em Theory of {G}ames and {E}conomic
  {B}ehavior}, Princeton university press, 1944.

\bibitem{Wang_kind_2018}
{\sc G.~Wang, H.~Xiao, and J.~Xiong}, {\em A kind of {LQ} non-zero sum
  differential game of backward stochastic differential equation with
  asymmetric information}, Automatica, 97 (2018), pp.~346--352.

\bibitem{Wang_time-inconsistent_2022}
{\sc H.~Wang and Z.~Wu}, {\em Time-inconsistent linear-quadratic non-zero sum
  stochastic differential games with random jumps}, International Journal of
  Control, 95 (2022), pp.~1864--1874.

\bibitem{Wen_2023}
{\sc J.~Wen, X.~Li, J.~Xiong, and X.~Zhang}, {\em Stochastic linear-quadratic
  optimal control problems with random coefficients and {M}arkovian regime
  switching system}, SIAM Journal on Control and Optimization, 61 (2023),
  pp.~949--979.

\bibitem{Wu-etal}
{\sc F.~Wu, X.~Li, and X.~Zhang}, {\em Stochastic linear quadratic optimal
  control problems with regime-switching jumps in infinite horizon}, arXiv
  preprint arXiv:2403.00288,  (2024).

\bibitem{Wu-Tang-Meng-2023}
{\sc J.~Wu, M.~Tang, and Q.~Meng}, {\em A stochastic linear-quadratic optimal
  control problem with jumps in an infinite horizon}, AIMS Mathematics, 8
  (2023), pp.~4042--4078.

\bibitem{Wu_linear_2005}
{\sc Z.~Wu and Z.~Yu}, {\em Linear quadratic nonzero-sum differential games
  with random jumps}, Applied Mathematics and Mechanics, 26 (2005),
  pp.~1034--1039.

\bibitem{Yu-2012-LQG}
{\sc Z.~Yu}, {\em Linear-quadratic optimal control and nonzero-sum differential
  game of forward-backward stochastic system}, Asian Journal of Control, 14
  (2012), pp.~173--185.

\bibitem{Zermelo-1912-anwendung}
{\sc E.~Zermelo}, {\em {\"U}ber {E}ine {A}nwendung {D}er {M}engenlehre {A}uf
  {D}ie {T}heorie {D}es {S}chachspiels}, Cambridge University Press, 1912.

\bibitem{Zhang_backward_2011}
{\sc D.~Zhang}, {\em Backward linear-quadratic stochastic optimal control and
  nonzero-sum differential game problem with random jumps}, Journal of Systems
  Science and Complexity, 24 (2011), pp.~647--662.

\bibitem{zhang_stochastic_2012}
{\sc X.~Zhang, R.~J. Elliott, and T.~K. Siu}, {\em A stochastic maximum
  principle for a {M}arkov regime-switching jump-diffusion model and its
  application to finance}, SIAM Journal on Control and Optimization, 50 (2012),
  pp.~964--990.

\bibitem{zhang-Elliott-Siu-Guo-2011}
{\sc X.~Zhang, R.~J. Elliott, T.~K. Siu, and J.~Guo}, {\em Markovian
  regime-switching market completion using additional {M}arkov jump assets},
  IMA Journal of Management Mathematics, 23 (2011), pp.~283--305.

\bibitem{zhang2021open}
{\sc X.~Zhang, X.~Li, and J.~Xiong}, {\em Open-loop and closed-loop
  solvabilities for stochastic linear quadratic optimal control problems of
  {M}arkovian regime switching system}, ESAIM: Control, Optimisation and
  Calculus of Variations, 27 (2021), p.~69.

\bibitem{Zhang-Siu-Meng-2010}
{\sc X.~Zhang, T.~K. Siu, and Q.~Meng}, {\em Portfolio selection in the
  enlarged markovian regime-switching market}, SIAM Journal on Control and
  Optimization, 48 (2010), pp.~3368--3388.

\bibitem{zhang_general_2018}
{\sc X.~Zhang, Z.~Sun, and J.~Xiong}, {\em A general stochastic maximum
  principle for a {M}arkov regime switching jump-diffusion model of mean-field
  type}, SIAM Journal on Control and Optimization, 56 (2018), pp.~2563--2592.

\bibitem{Zhu-Zhang-Bin-2014}
{\sc H.~Zhu, C.~Zhang, and N.~Bin}, {\em Infinite horizon linear quadratic
  stochastic {N}ash differential games of {M}arkov jump linear systems with its
  application}, International Journal of Systems Science, 45 (2014),
  pp.~1196--1201.

\end{thebibliography}

\end{document}